\newtheorem{df}{\bf Definition}[section]
\newtheorem{thm}[df]{\bf Theorem}
\newtheorem{lem}[df]{\bf Lemma}
\newtheorem{prop}[df]{\bf Proposition}
\newtheorem{rem}[df]{\bf Remark}
\newtheorem{exa}[df]{\bf Example}
\newtheorem{assum}[df]{\bf Assumption}
\newtheorem*{claim}{\bf Claim}
\newcommand{\R}{\mathbb{R}}
\newcommand{\C}{\mathbb{C}}
\newcommand{\Z}{\mathbb{Z}}
\newcommand{\N}{\mathbb{N}}
\newcommand{\B}{\mathbb{B}}
\newcommand{\K}{\mathbb{K}}
\newcommand{\T}{\mathbb{T}}
\newcommand{\rM}{\mathrm{M}}
\newcommand{\frakS}{\mathfrak{S}}
\newcommand{\cK}{\mathcal{K}}
\newcommand{\cA}{\mathcal{A}}
\newcommand{\cF}{\mathcal{F}}
\newcommand{\cU}{\mathcal{U}}
\newcommand{\cC}{\mathcal{C}}
\newcommand{\cG}{\mathcal{G}}
\newcommand{\cI}{\mathcal{I}}
\newcommand{\cB}{\mathcal{B}}
\newcommand{\act}{\curvearrowright}
\newcommand{\ri}{\mathrm{i}}
\newcommand{\actson}{\curvearrowright}
\newcommand{\sym}{\mathrm{sym}}
\newcommand{\full}{\mathrm{full}}
\newcommand{\anti}{\mathrm{anti}}
\newcommand{\free}{\mathrm{free}}
\newcommand{\sfX}{\mathsf{X}_\full}
\newcommand{\sfY}{\mathsf{X}_\sym}
\newcommand{\sfZ}{\mathsf{X}_\anti}
\newcommand{\Ad}{\operatorname{Ad}}
\newcommand{\Prob}{\operatorname{Prob}}
\newcommand{\id}{\text{\rm id}}
\newcommand{\ovt}{\mathbin{\overline{\otimes}}}
\newcommand{\otm}{\otimes_{\rm min}}
\newcommand{\ota}{\otimes_{\rm alg}}
\newcommand{\supp}{\mathrm{supp}}
\title{\bf Note on bi-exactness for creation operators on Fock spaces}
\author{Kei Hasegawa\and 
Yusuke Isono\thanks{Research Institute for Mathematical Sciences, Kyoto University, 606-8502, Kyoto, Japan \protect \\  E-mail: \texttt{isono@kurims.kyoto-u.ac.jp} \protect \\  
YI is supported by JSPS KAKENHI Grant Number 20K14324.}\and
Tomohiro Kanda}
\date{}
\begin{document}
\maketitle

\begin{abstract}
	In this note, we introduce and study a notion of bi-exactness for creation operators acting on full, symmetric and anti-symmetric Fock spaces. This is a generalization of our previous work, in which we studied the case of anti-symmetric Fock spaces. As a result, we obtain new examples of solid actions as well as new proofs for some known solid actions. We also study free wreath product groups in the same context.
\end{abstract}

\section{Introduction}

This article is a complementary note of our previous work \cite{HIK20}, in which we studied some boundary amenability phenomenon for creation operators on anti-symmetric Fock spaces. 
Our work was inspired by Ozawa's work on Bernoulli actions and wreath product groups \cite{Oz04,BO08}, which we briefly explain here.

Let $\Lambda$ be an amenable group and $\Gamma$ an exact group. Let $\Lambda \wr \Gamma = \left[\bigoplus_\Gamma \Lambda \right]\rtimes \Gamma$ be the wreath product group. Ozawa constructed a C$^*$-subalgebra $C(X) \subset \ell^\infty(\bigoplus_\Gamma \Lambda)$ such that, with the quotient $C(\partial X):=C(X)/c_0(\bigoplus_\Gamma \Lambda)$, (i) the left and right translation of $\bigoplus_\Gamma \Lambda$ on $C(\partial X)$ is trivial, and (ii) the left and right translation of $\Gamma\times\Gamma$ on $C(\partial X)$ is \textit{topologically amenable}. This amenability implies that the following map 
	$$ C(X) \ota C_\lambda^*(\Lambda \wr \Gamma)\ota C_\rho^*(\Lambda \wr \Gamma) \to \B(\ell^2(\Lambda \wr \Gamma))$$
arising from inclusions is a minimal norm bounded $\ast$-homomorphism up to some \textit{relative} compact operators. Note that if there is no such compact operators, this boundedness is equivalent to amenability of $\Lambda \wr \Gamma$. 
Ozawa used the boundedness to deduce some  \textit{rigidity} of $L(\Lambda\wr\Gamma)$. This is the idea in \cite{Oz04} and a general framework is given in \cite[Section 15]{BO08}.

In \cite[Section 3]{HIK20}, we have developed a way of applying these techniques to creation operators acting on anti-symmetric Fock spaces. In this note, we introduce a more general framework that covers 
\begin{itemize}
	\item (Subsection \ref{Examples from groups}) wreath product groups and $\Z^2\rtimes \mathrm{SL}(2,\Z)$ \cite{Oz04,BO08,Oz08};

	\item (Section \ref{Bi-exactness of creation operators on Fock spaces}) creation operators on full, symmetric, and anti-symmetric Fock spaces;

	\item (Section \ref{Other examples}) free wreath product groups.

\end{itemize}
We will obtain some boundary amenability phenomenon for all these examples. 
This framework unifies Ozawa's works and our previous one, hence it is useful to understand how they are related.

As in \cite{HIK20}, our boundary amenability has an application to rigidity of associated von Neumann algebras. Recall that a discrete group action $\Gamma \act M$ on a diffuse vou Neumann algebra is \textit{solid} if for any diffuse von Neumann subalgebra $A\subset M$, which is a range of a faithful normal conditional expectation, the relative commutant $A' \cap M$ is amenable (see \cite[Appendix]{HIK20}). In Section \ref{Application to rigidity of von Neumann algebras}, we will show that, under certain assumptions, the group action $\Gamma \act X$ on a set $X$ gives rise to solid actions of $\Gamma$ on associated von Neumann algebras. Most of them were already obtained in \textit{Popa's deformation/rigidity theory}, but some of them give new examples, see Remark \ref{solid-remark}. 
In this note, we focus on the case of state preserving actions, so technical difficulties discussed in \cite[Section 4]{HIK20} do not appear.

\tableofcontents

\section{Preliminaries}
\label{Preliminaries}

\subsection{Relative $c_0$-functions and compact operators}\label{Relative functions and compact operators}

Let $X$ be a set. We recall generalized $c_0$-functions and compact operators.

\begin{df}\upshape
	Let $\mathcal X$ be a family of subsets in $X$ satisfying
	$$ E,F\in \mathcal X \quad \Rightarrow \quad E\cup F\in \mathcal X. $$
We say that a subset $A\subset X$ is \textit{small relative to $\mathcal X$}  (and write \textit{small$/\mathcal X$}) if there is $E \in \mathcal X$ such that $A\subset E$. We use the following notations.
\begin{itemize}
	\item For any net $(x_\lambda)_\lambda$ in $X$, we write $X \ni x_\lambda \to \infty /\mathcal X$ if for any $E\in \mathcal X$, there is $\lambda_0$ such that $x_\lambda\not\in E$ for all  $\lambda \geq \lambda_0$. 

	\item We denote by $c_0(\mathcal X)\subset \ell^\infty(X)$ the C$^*$-algebra generated by functions whose supports are small relative to $\mathcal X$.

\end{itemize}
\end{df}

We note that if $\mathcal X$ is the family of all finite subsets, then $c_0(\mathcal X)=c_0(X)$. The following lemma is straightforward.

\begin{lem}
	The following statements hold true.
\begin{enumerate}\label{c0-lemma}
	\item Let $c_0^{\rm alg}(\mathcal X)\subset \ell^\infty(X)$ be the set of all functions whose supports are small relative to $\mathcal X$. Then it is a dense $\ast$-subalgebra in $c_0(\mathcal X)$. In particular $c_0(\mathcal X)\leq \ell^\infty(X)$ is a closed ideal.

	\item For any $f\in \ell^\infty(X)$, the following conditions are equivalent:
\begin{itemize}
	\item[$\rm(a)$] $ f\in c_0(\mathcal X)$;

	\item[$\rm(b)$] $\{x\in X\mid |f(x)|>\varepsilon\}$ is small$/\mathcal X$ for any $\varepsilon>0$;

	\item[$\rm(c)$] for any net $(x_\lambda)_\lambda$ in $X$, $x_\lambda \to \infty /\mathcal X$ implies $\lim_{\lambda\to \infty} f(x_\lambda) = 0$ (we write it as $\lim_{x \to \infty /\mathcal X}f(x)=0$).
\end{itemize}

	\item Let $Y\subset X$ be a non-empty subset. Define 
	$$ \mathcal Y:= \{ E\cap Y\mid E\in \mathcal X \} $$
(so that $E,F\in \mathcal Y \Rightarrow E\cup F\in \mathcal Y$). We have
\begin{itemize}
	\item for any net $\{y_i\}_i$ in $Y$, \quad
	$ y_i \to \infty/ \mathcal Y \quad \Leftrightarrow \quad  y_i \to \infty/ \mathcal X$;

	\item $ c_0(\mathcal Y) = c_0(\mathcal X) \cap \ell^\infty(Y) $.

\end{itemize}

	\item Let $\Gamma \act^\pi X$ be any action of a discrete group $\Gamma $ which globally preserves $Y$ such that 
	$$\pi_g(E)\in \mathcal X,\quad \text{for all } E\in \mathcal X, \ g\in \Gamma .$$
In this case, the natural action $\Gamma \act \ell^\infty(Y)$ induces an action on $c_0(\mathcal Y)$.

\end{enumerate}
\end{lem}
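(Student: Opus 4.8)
The plan is to prove the four parts of Lemma~\ref{c0-lemma} in order, each reducing to an elementary verification once the definitions are unwound. The key observation throughout is the hypothesis that $\mathcal X$ is closed under finite unions, which is what makes ``small$/\mathcal X$'' behave like a genuine smallness ideal.

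For part~(1), I would first check that $c_0^{\rm alg}(\mathcal X)$ is a $\ast$-subalgebra. Closure under scalars and conjugation is immediate since the support of $\bar f$ or $cf$ is contained in $\supp f$. For $f,g$ with $\supp f \subset E$ and $\supp g \subset F$ (with $E,F\in\mathcal X$), we have $\supp(f+g)\subset E\cup F$, and the union hypothesis gives $E\cup F\in\mathcal X$; for the product $\supp(fg)\subset E$ already works. So $c_0^{\rm alg}(\mathcal X)$ is a $\ast$-subalgebra, and by definition $c_0(\mathcal X)$ is its norm closure, hence $c_0^{\rm alg}(\mathcal X)$ is dense in it. That $c_0(\mathcal X)$ is an ideal in $\ell^\infty(X)$ follows because for $h\in\ell^\infty(X)$ and $f\in c_0^{\rm alg}(\mathcal X)$ one has $\supp(hf)\subset \supp f$, so $hf\in c_0^{\rm alg}(\mathcal X)$; passing to the closure, multiplication by $h$ preserves $c_0(\mathcal X)$.

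For part~(2), the plan is to show (b)$\Leftrightarrow$(c) directly and then (a)$\Leftrightarrow$(b). The equivalence (b)$\Leftrightarrow$(c) is the standard net-characterization of vanishing at infinity relative to the smallness notion, proved by contraposition: if some superlevel set $\{|f|>\varepsilon\}$ is not small$/\mathcal X$, it fails to lie in any $E\in\mathcal X$, so one can extract a net escaping every $E$ along which $|f|>\varepsilon$, contradicting (c); conversely (b) forces every escaping net to eventually leave each superlevel set. For (a)$\Leftrightarrow$(b), the direction (b)$\Rightarrow$(a) uses that $f$ with small superlevel sets is a uniform limit of the truncations $f\cdot \mathbf 1_{\{|f|>\varepsilon\}}\in c_0^{\rm alg}(\mathcal X)$; the direction (a)$\Rightarrow$(b) uses that uniform approximation by elements of $c_0^{\rm alg}(\mathcal X)$ controls each superlevel set, again invoking the union hypothesis to absorb the support of the approximant together with the error set.

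Parts~(3) and~(4) are then formal. For~(3) I would note $\mathcal Y$ inherits closure under finite unions from $\mathcal X$, and the two bullet points follow because a subset of $Y$ is small$/\mathcal Y$ iff it is small$/\mathcal X$ (the containing set may be intersected with $Y$ without leaving $\mathcal Y$); the identity $c_0(\mathcal Y)=c_0(\mathcal X)\cap\ell^\infty(Y)$ then follows from part~(2)(b) applied on both sides. For~(4), the hypothesis $\pi_g(E)\in\mathcal X$ means the $\Gamma$-action carries small$/\mathcal X$ sets to small$/\mathcal X$ sets, so it permutes $c_0^{\rm alg}(\mathcal X)$ and, after restricting to $Y$ and using part~(3), descends to an action on $c_0(\mathcal Y)$ by isometric $\ast$-automorphisms. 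The main obstacle, to the extent there is one, is the careful net-extraction in~(2)(b)$\Rightarrow$(c): one must ensure that ``not small$/\mathcal X$'' genuinely yields a net escaping to infinity$/\mathcal X$, which is exactly where the directedness of $\mathcal X$ under unions is indispensable.
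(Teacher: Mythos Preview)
Your proof is correct and is precisely the straightforward verification the paper has in mind; the paper does not give a proof at all, simply labeling the lemma as ``straightforward,'' and your argument fills in exactly those routine details.
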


Using $c_0(\mathcal X)$, define the relative compact operators and its multiplier algebra by
\begin{align*}
	\K(\mathcal X)&:= \overline{ c_0(\mathcal X)\B(\ell^2(X)) c_0(\mathcal X)}^{\rm norm}\\
	\mathrm{M}(\K(\mathcal X))&:=\text{the multiplier algebra of $\K(\mathcal X)$}.
\end{align*}
Note that $\K(\mathcal X)$ is the minimum hereditary C$^*$-algebra which contains $c_0(\mathcal X)$. If there is a discrete group action $\Gamma \act^\pi X$, by using the natural unitary representation $\Gamma \act^{U^\pi} \ell^2(X)$, $\Ad(U^\pi_g)$ globally preserves $c_0(\mathcal X)$ and $\K(\mathcal X)$, so that $ U_g^\pi \in \mathrm M(\K(\mathcal X))$.

We record the following elementary lemma.

\begin{lem}\label{lem-multiplier}
For any $a\in \B(\ell^2(X))$,
	$$ a\in \mathrm{M}(\K(\mathcal X)) \quad \Leftrightarrow \quad [a,f]\in \K(\mathcal X),\quad \text{for all } f\in c_0(\mathcal X).$$
\end{lem}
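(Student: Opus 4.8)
The plan is to work with the concrete realization of $\mathrm M(\K(\mathcal X))$ inside $\B(\ell^2(X))$, namely the idealizer $\{a\in\B(\ell^2(X)) : a\K(\mathcal X)\subseteq\K(\mathcal X)\text{ and }\K(\mathcal X)a\subseteq\K(\mathcal X)\}$, and to show that this two-sided invariance is equivalent to the commutator condition. Two preliminary observations drive everything. First, $c_0(\mathcal X)\subseteq\K(\mathcal X)$: for $0\le f\in c_0(\mathcal X)$ one has $f^{1/2}\in c_0(\mathcal X)$ and $f=f^{1/2}\cdot 1\cdot f^{1/2}\in c_0(\mathcal X)\B(\ell^2(X))c_0(\mathcal X)$, and the general case follows by decomposing $f$ into positive parts. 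Second, the net $(1_E)_{E\in\mathcal X}$, directed by inclusion (using that $\mathcal X$ is stable under unions), is an approximate unit of projections for both $c_0(\mathcal X)$ and $\K(\mathcal X)$; in particular $\K(\mathcal X)$ acts nondegenerately on $\ell^2(\bigcup_{E\in\mathcal X}E)$, which is what justifies the idealizer realization.

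The forward implication is then immediate: if $a\in\mathrm M(\K(\mathcal X))$, then for every $f\in c_0(\mathcal X)\subseteq\K(\mathcal X)$ both $af$ and $fa$ lie in $\K(\mathcal X)$, hence so does $[a,f]=af-fa$.

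For the converse I would first record two absorption properties of $\K(\mathcal X)=\overline{c_0(\mathcal X)\B(\ell^2(X))c_0(\mathcal X)}$, namely $\K(\mathcal X)\,\B(\ell^2(X))\,c_0(\mathcal X)\subseteq\K(\mathcal X)$ and $c_0(\mathcal X)\,\B(\ell^2(X))\,\K(\mathcal X)\subseteq\K(\mathcal X)$. Both follow by approximating a general $b\in\K(\mathcal X)$ in norm by finite sums $\sum_i f_iT_ig_i$ with $f_i,g_i\in c_0(\mathcal X)$ and regrouping, e.g. $(\sum_i f_iT_ig_i)Tg=\sum_i f_i(T_ig_iT)g\in c_0(\mathcal X)\B(\ell^2(X))c_0(\mathcal X)$. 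Now assume $[a,f]\in\K(\mathcal X)$ for all $f\in c_0(\mathcal X)$. Since the elements $fTg$ span a norm-dense subspace of $\K(\mathcal X)$ and multiplication by $a$ is continuous, it suffices to check $a(fTg)\in\K(\mathcal X)$ and $(fTg)a\in\K(\mathcal X)$. For the first, the identity $af=fa+[a,f]$ gives
$$ a(fTg)=f(aT)g+[a,f]Tg, $$
where $f(aT)g\in c_0(\mathcal X)\B(\ell^2(X))c_0(\mathcal X)\subseteq\K(\mathcal X)$ and $[a,f]Tg\in\K(\mathcal X)\B(\ell^2(X))c_0(\mathcal X)\subseteq\K(\mathcal X)$ by the first absorption property. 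Symmetrically, using $ga=ag-[a,g]$,
$$ (fTg)a=f(Ta)g-fT[a,g], $$
whose two terms lie in $c_0(\mathcal X)\B(\ell^2(X))c_0(\mathcal X)$ and $c_0(\mathcal X)\B(\ell^2(X))\K(\mathcal X)$ respectively, hence in $\K(\mathcal X)$. Thus $a\K(\mathcal X)\subseteq\K(\mathcal X)$ and $\K(\mathcal X)a\subseteq\K(\mathcal X)$, i.e. $a\in\mathrm M(\K(\mathcal X))$.

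The only genuinely delicate point is the first one: pinning down the realization of $\mathrm M(\K(\mathcal X))$ as the idealizer inside $\B(\ell^2(X))$, since $\K(\mathcal X)$ need not act nondegenerately on all of $\ell^2(X)$ when $\bigcup_{E\in\mathcal X}E\ne X$. I would handle this by restricting to the essential subspace $\ell^2(\bigcup_{E\in\mathcal X}E)$, on which the approximate unit $(1_E)_E$ makes $\K(\mathcal X)$ nondegenerate and the standard identification of the multiplier algebra with the idealizer applies. Once that is in place the rest is just the commutator manipulation above; the recurring idea is simply $af=fa+[a,f]$ combined with the two-sided absorption of $\K(\mathcal X)$ by $c_0(\mathcal X)$.
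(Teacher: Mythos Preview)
The paper does not supply a proof of this lemma; it is stated as an elementary fact without argument. Your proof is correct and is exactly the kind of verification one would expect: the forward direction is immediate from $c_0(\mathcal X)\subseteq\K(\mathcal X)$, and for the converse your two absorption properties together with the commutator identity $af=fa+[a,f]$ do the job cleanly. Your care about the nondegeneracy issue (when $\bigcup_{E\in\mathcal X}E\neq X$) is appropriate, though in the paper's applications the statement ``$a\in\mathrm M(\K(\mathcal X))$'' is simply shorthand for ``$a$ lies in the idealizer of $\K(\mathcal X)$ inside $\B(\ell^2(X))$'', so one may take that as the definition and bypass the subtlety.
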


\subsection{Bi-exactness for groups}
\label{Bi-exactness for groups}

We refer the reader to \cite[Section 2]{BO08} for nuclearlity, \cite[Subsection 4.4]{BO08} for amenability of actions, \cite[Section 5]{BO08} for exactness, and \cite[Section 15]{BO08} for bi-exactness of groups.

Let $ \Gamma \act X$ be an action of a discrete group on a compact Hausdorff space. Consider the associated action $\Gamma \act C(X)$. 
We say that it is \textit{(topologically) amenable} if the reduced and full crossed products coincide: $C(X)\rtimes_{\rm red} \Gamma = C(X)\rtimes_{\rm full} \Gamma $. 
More generally for any action $ \Gamma \act A$ on a unital C$^*$-algebra $A$, the action is \textit{amenable} if the restriction to the center of $A$ is amenable. In that case, we also have $A\rtimes_{\rm red} \Gamma = A\rtimes_{\rm full} \Gamma $.

Let $\Gamma$ be a discrete group and $\cG$ a family of subsets in $\Gamma$ such that 
	$$sEt,\ E\cup F\in \cG\quad  \text{for all }s,t\in \Gamma, \ E,F\in \cG.$$
We can define $c_0(\cG)$ as in the previous subsection, which admits the left and right translation action $\Gamma\times \Gamma\act c_0(\cG)$. 
We say that $\Gamma$ is \textit{bi-exact relative to $\cG$} (and write \textit{bi-exact$/\cG$}) if the left and right translation action $\Gamma \times \Gamma \act \ell^\infty(\Gamma)/c_0(\cG)$ is amenable. When $\cG$ consists of all finite subsets in $\Gamma$, this means amenability of $\Gamma \times \Gamma \act \ell^\infty(\Gamma)/c_0(\Gamma)$. Recall that $\Gamma$ is exact if and only if the left translation $\Gamma \act \ell^\infty(\Gamma)/c_0(\Gamma)$ is amenable \cite[Theorem 5.1.7]{BO08}, hence the terminology \textit{bi}-exact makes sense.

Assume that there is a family $\cG_0$ of subgroups in $\Gamma$ which generates $\cG$ in the following sense:
	$$\cG= \{ \bigcup_{\rm finite}s\Lambda t\mid s,t\in \Gamma,\ \Lambda\in \cG_0 \}. $$
In this case, $c_0(\cG)$ coincides with $c_0(\Gamma;\cG_0)$ given in \cite[Subsection 15.1]{BO08} and hence our definition of bi-exactness is a generalization of Ozawa's one, see \cite[Proposition 15.2.3]{BO08}. The proof of 3 $\Rightarrow$ 1 in \cite[Proposition 15.2]{BO08} shows that if $\Gamma $ is bi-exact$/\cG$, then the algebraic $\ast$-homomorphism
	$$ \nu\colon C_\lambda^*(\Gamma)\ota C_\rho^*(\Gamma)\to \mathrm{M}(\K(\cG))/\K(\cG);\quad a\otimes b\mapsto ab+\K(\cG) $$
is bounded with respect to the minimal tensor norm (say, \textit{min-bounded}). Further if $\Gamma $ is countable, it has a ucp lift $\theta\colon C_\lambda^*(\Gamma)\ota C_\rho^*(\Gamma)\to \mathrm{M}(\K(\cG))$ in the sense that $\theta(a\otimes b)-ab\in \K(\cG)$ for all $a\otimes b\in C_\lambda^*(\Gamma)\otm C_\rho^*(\Gamma)$. 
This boundedness is called \textit{condition (AO)} \cite{Oz04}. In this article, we study condition (AO) for several concrete examples.

Recall that a ucp map $\pi\colon A\to B$ between unital C$^*$-algebras is nuclear if and only if for any C$^*$-algebra $C$, the map $\pi\otimes\id_C\colon A\otimes_{\rm max}C \to B\otimes_{\rm max} C$ factors through $A\otm C$. 
We will need the following facts.

\begin{prop}\label{prop-nuclear}
Let $\pi\colon A\to B$ be a unital $\ast$-homomorphism between unital C$^*$-algebras $A$ and $B$. The following assertions hold true.
\begin{enumerate}

	\item Assume that a discrete group $\Gamma$ acts on $A$ and $B$. If $\pi$ is $\Gamma$-equivariant, nuclear, and $\Gamma \actson A$ is amenable, then the natural extension $\pi\colon A\rtimes_{\rm red}\Gamma =A\rtimes_{\rm full}\Gamma \to B\rtimes_{\rm full}\Gamma$ is nuclear.

	\item If $\pi$ is nuclear and $A$ is exact, then the induced map $A/\ker\pi\to B$ is nuclear.

	\item Let $J\subset A$ and $I\subset B$ be ideals such that $\pi(I)\subset J$. Consider the following diagram
\[
\xymatrix{
0 \ar[r] &  J  \ar[r]& A \ar[r] & A/J  \ar[r] & 0  \\
0 \ar[r] & I \ar[r]\ar[u]^{\varphi=\pi|_I} &B \ar[r]\ar[u]^{\pi} & B/I \ar[u]^{\psi} \ar[r] & 0  
}
\] 
where $\psi$ is the induced map. Assume that there is an approximate unit $(a_i)_i$ in $I$ such that $(\varphi(a_i))_i$ is an approximate unit for $J$. If $\varphi$ and $\psi$ are nuclear and $B$ is exact, then $\pi$ is nuclear.

\end{enumerate}
\end{prop}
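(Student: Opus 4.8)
My overall strategy is to verify nuclearity through the tensor-product criterion recalled just before the statement: a unital $\ast$-homomorphism $\theta\colon D\to E$ is nuclear precisely when, for every C$^*$-algebra $C$, the map $\theta\otimes\id_C\colon D\otimes_{\rm max}C\to E\otimes_{\rm max}C$ factors through the canonical quotient $D\otimes_{\rm max}C\to D\otm C$. Thus in each part the task becomes to produce a completely positive factorization through a minimal tensor product, with the relevant hypothesis (amenability in (1), exactness of $A$ in (2), exactness of $B$ in (3)) serving as the device that passes from the maximal to the minimal norm.

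For (1), fix $C$ with the trivial $\Gamma$-action and use the standard identifications $(A\rtimes_{\rm full}\Gamma)\otimes_{\rm max}C\cong(A\otimes_{\rm max}C)\rtimes_{\rm full}\Gamma$ and $(A\rtimes_{\rm red}\Gamma)\otm C\cong(A\otm C)\rtimes_{\rm red}\Gamma$. Since $\pi$ is nuclear, $\pi\otimes\id_C$ factors $\Gamma$-equivariantly as $A\otimes_{\rm max}C\to A\otm C\xrightarrow{\rho}B\otimes_{\rm max}C$ with $\rho$ completely positive; applying the full crossed product functor gives $(A\otimes_{\rm max}C)\rtimes_{\rm full}\Gamma\to(A\otm C)\rtimes_{\rm full}\Gamma\to(B\otimes_{\rm max}C)\rtimes_{\rm full}\Gamma$. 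The key point is that amenability of $\Gamma\actson A$ forces amenability of the trivially extended action $\Gamma\actson A\otm C$, whence $(A\otm C)\rtimes_{\rm full}\Gamma=(A\otm C)\rtimes_{\rm red}\Gamma=(A\rtimes_{\rm red}\Gamma)\otm C$. Tracing through the identifications, the middle term becomes $(A\rtimes\Gamma)\otm C$ and the composite realizes the factorization of $\widetilde\pi\otimes\id_C$ through a minimal tensor product.

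For (2), write $J=\ker\pi$ and fix $C$. Nuclearity of $\pi$ gives $\pi\otimes\id_C=\rho\circ q$ with $q\colon A\otimes_{\rm max}C\to A\otm C$ the quotient and $\rho\colon A\otm C\to B\otimes_{\rm max}C$ completely positive. On the algebraic tensor $J\ota C$ one has $\pi\otimes\id_C=0$, so $\rho$ vanishes on the image of $J\otimes_{\rm max}C$ in $A\otm C$; by exactness of $A$ this image is exactly $J\otm C=\ker\big(A\otm C\to(A/J)\otm C\big)$. Hence $\rho$ descends to $\overline\rho\colon(A/J)\otm C\to B\otimes_{\rm max}C$, and checking on generators gives $\overline\pi\otimes\id_C=\overline\rho\circ q_{A/J}$, the desired factorization. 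The role of exactness is precisely to identify $(A\otm C)/(J\otm C)$ with $(A/J)\otm C$.

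Part (3) is where I expect the real work (following the diagram, $\pi\colon B\to A$ restricts to $\varphi$ on the ideals and induces $\psi$ on the quotients). Fix $C$ and set $q_B\colon B\otimes_{\rm max}C\to B\otm C$ with kernel $N$; it suffices to show $N\subseteq\ker(\pi\otimes\id_C)$, for then $\pi\otimes\id_C$ descends to $B\otm C$. Maximal tensoring keeps the rows exact unconditionally, while exactness of $B$ makes $0\to I\otm C\to B\otm C\to(B/I)\otm C\to 0$ exact; comparing them yields $N\cap(I\otimes_{\rm max}C)=\ker(I\otimes_{\rm max}C\to I\otm C)$. Nuclearity of $\varphi$ kills this latter kernel, and nuclearity of $\psi$ with max-exactness of the bottom row shows that for $x\in N$ the element $(\pi\otimes\id_C)(x)$ at least lies in $J\otimes_{\rm max}C$. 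The decisive step, and the main obstacle, is upgrading this to $(\pi\otimes\id_C)(x)=0$: here I would use the approximate units. With $(a_i)\subset I$ such that $(\varphi(a_i))$ is an approximate unit of $J$, and using $\pi|_I=\varphi$, one has $(a_i\otimes1)x\in N\cap(I\otimes_{\rm max}C)$, hence $(\pi\otimes\id_C)\big((a_i\otimes1)x\big)=(\varphi(a_i)\otimes1)(\pi\otimes\id_C)(x)=0$; since $(\varphi(a_i)\otimes1)$ acts as an approximate unit on $J\otimes_{\rm max}C\ni(\pi\otimes\id_C)(x)$, passing to the limit gives $(\pi\otimes\id_C)(x)=0$. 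The genuinely technical care lies in the multiplier actions, the approximate unit of the tensor product, and the comparison of the two rows; everything else is formal manipulation of the tensor criterion.
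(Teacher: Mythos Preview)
Your argument is correct in all three parts. Parts~(1) and~(2) are essentially the paper's proof, just written out in full: the paper dismisses~(1) with a one-line reference to the tensor criterion, and for~(2) invokes the same exactness/local-reflexivity fact (\cite[Corollary~9.1.5]{BO08}) that you use to identify $(A\otm C)/(J\otm C)$ with $(A/J)\otm C$.

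Part~(3) is where you genuinely diverge. The paper passes to biduals: it uses that $I$ and $B/I$ are exact, hence locally reflexive, so that $\varphi^{**}$ and $\psi^{**}$ are weakly nuclear; then the approximate-unit hypothesis says $\varphi^{**}\colon I^{**}\to J^{**}$ is unital, so that under $B^{**}\cong I^{**}\oplus(B/I)^{**}$ and $A^{**}\cong J^{**}\oplus(A/J)^{**}$ the map $\pi^{**}$ is block-diagonal with weakly nuclear blocks, hence weakly nuclear, hence $\pi$ is nuclear. Your approach is the direct diagram chase with the tensor criterion, and in fact the paper explicitly remarks that ``we can directly prove it by diagram chasing (without von Neumann algebras)'' --- you have supplied exactly that proof. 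Your route is more elementary (no biduals, no local reflexivity), and a closer look at your argument shows that exactness of $B$ is not actually needed: the equality $N\cap(I\otimes_{\rm max}C)=\ker(I\otimes_{\rm max}C\to I\otm C)$ only uses injectivity of $I\otm C\hookrightarrow B\otm C$, and the rest goes through without the min-tensor row being exact. The paper's bidual argument, by contrast, genuinely consumes exactness to obtain local reflexivity. So your version proves a slightly sharper statement at the cost of a more hands-on verification with approximate units; the paper's version is conceptually cleaner once one accepts the $W^*$ machinery.
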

\begin{proof}
	1. This follows by the characterization of nuclearlity above.

	2. Observe first that $A$ is locally reflexive (e.g.\ \cite[Corollary 9.4.1]{BO08}). Let $C$ be any C$^*$-algebra and consider $\pi\otimes \id_C\colon A\otm C \to B\otimes_{\rm max} C$. 
With $J:=\ker\pi$ and by \cite[Corollary 9.1.5]{BO08}, it induces
	$$A/J \otm C\simeq  \frac{A\otm C}{J\otm C} \to B\otimes_{\rm max} C.$$
This shows that $A/J \otimes_{\rm max} C\to B\otimes_{\rm max} C$ factors the minimal tensor product.

	3. Observe that $\varphi^{**},\psi^{**}$ are weakly nuclear, since $\varphi,\psi$ are nuclear and $I,B/I$ are exact (hence locally reflexive). Consider the map
\begin{align*}
	I^{**}\oplus (B/I)^{**}\simeq B^{**}\to^{\pi^{**}} A^{**}\simeq J^{**}\oplus (A/J)^{**}
\end{align*}
Then $\pi^{**}$ restricts to $\varphi^{**}\colon I^{**}\to J^{**}$, which  is \textit{unital} by the assumption on approximate units. Then $\pi^{**}$ restricts to $ (B/I)^{**}\to  (A/J)^{**}$, which coincides with $\psi^{**}$ by construction. We conclude that $\pi^{**}$ is weakly nuclear. This implies $\pi$ is nuclear. Note that we can directly prove it by diagram chasing (without von Neumann algebras).
\end{proof}

\subsection{Standard representations}
\label{Standard representations}

For Tomita--Takesaki's modular theory, we refer the reader to \cite{Ta03}. For a von Neumann algebra $M$ with a faithful normal state $\varphi$, we denote by $\Delta_\varphi$ and $J_\varphi$ the \textit{modular operator} and the \textit{modular conjugation}. When $\varphi$ is tracial, $\Delta_\varphi$ is trivial and $J_\varphi$ is given by the involution $a\mapsto a^*$. The GNS representation $L^2(M):=L^2(M,\varphi)$ is called the \textit{standard representation}. 

Let $\alpha\colon \Gamma\act M$ be an action of a discrete group $\Gamma$ and $U_g \in \mathcal U(L^2(M))$ the standard implementation of $\alpha_g$ for each $g\in \Gamma$. 
We use the standard representation $L^2(M)\otimes \ell^2(\Gamma)$ of $M\rtimes \Gamma$ given by: with the $J$-map $J_M$ on $L^2(M)$,
\begin{align*}
	\text{Left action}\quad &M\ni a\mapsto a\otimes 1_\Gamma;\qquad \Gamma \ni g \mapsto U_g \otimes \lambda_g;\\
	\text{Right action}\quad &J_MMJ_M \ni a \mapsto \pi_r (a)=\sum_{h\in \Gamma}U_h a U_{h}^{-1}\otimes e_{h,h};\quad \Gamma \ni g \mapsto 1\otimes \rho_g;\\
	\text{$J$-map}\quad 	&J= \sum_{h\in \Gamma}U_hJ_M \otimes e_{h,h^{-1}}.
\end{align*}
Here $\{e_{g,h}\}_{g,h\in \Gamma}$ is the matrix unit in $\B(\ell^2(\Gamma))$. 
In the case that $M\rtimes \Gamma = L(\Delta\rtimes \Gamma)$ (a group von Neumann algebra of a semidirect product group $\Delta\rtimes \Gamma$), $\Ad(U_g\otimes \lambda_g\rho_h)$ for $g,h\in \Gamma$ restricts to an automorphism on $\ell^\infty(\Delta\rtimes \Gamma)$, which corresponds to translations $s\mapsto g^{-1}sh$ on $\Delta\rtimes \Gamma$. Similarly, $\Ad(J)$ corresponds to the inverse map on $\Delta\rtimes \Gamma$.

\subsection{Fock spaces and associated von Neumann algebras}
\label{Fock spaces and associated von Neumann algebras}

Let $q\in \{0,1,-1\}$ and let $H$ be a Hilbert space. For each $n\in \N$, we denote $H^{\ota n}$ by the $n$-algebraic tensor product. Define a (possibly degenerate) inner product on $H^{\ota n}$ by
\[
\langle \xi_1 \otimes \cdots \otimes \xi_n, \eta_1\otimes \cdots \otimes \eta_n \rangle = \sum_{\sigma \in \frakS_n}q^{i(\sigma)} \langle{\xi_{\sigma(1)}\otimes \cdots \otimes \xi_{\sigma(n)}, \eta_1 \otimes \cdots \otimes \eta_n}\rangle,
\]
for $\xi_1,\dots,\xi_n,\eta_1,\dots,\eta_n \in H$, where $i(\sigma)$ is the number of inversions. We define the \textit{$q$-Fock space} $\cF_q(H)$ by separation and completion of the algebraic Fock space
	$$\cF_{\rm alg}(H) =\C \Omega \oplus \bigoplus_{n \geq 1} H^{\ota n}.$$
When $q=0,1$, and $-1$, $\cF_q(H)$ is called the \textit{full}, \textit{symmetric}, and \textit{anti-symmetric} Fock space, respectively. For $\xi\in H$, define the \textit{left} and \textit{right creation operator} by
\begin{align*}
	& \ell(\xi)(\xi_1\otimes \cdots\otimes \xi_n) := \xi\otimes \xi_1\otimes \cdots\otimes \xi_n;
	& r(\xi)(\xi_1\otimes \cdots\otimes \xi_n) := \xi_1\otimes \cdots\otimes \xi_n\otimes \xi
\end{align*}
for all $n\in \N$ and $\xi_1,\ldots,\xi_n\in H$. They satisfy the \textit{$q$-commutation relation} 
	$$\ell(\xi)\ell(\eta)^* - q \ell(\eta)^*\ell(\xi) = \langle \xi,\eta\rangle,\quad \text{for all }\xi,\eta\in H.$$
Note that for any $0\neq \xi\in H$, $\ell(\xi)$ is bounded if $q\in\{0,-1\}$, and is unbounded if $q=1$.

We next introduce associated von Neumann algebras. For this, consider a \textit{real} Hilbert space $H_\R$ and extend it to a complex Hilbert space by $H:=H_\R\otimes_\R \C$. We identify $H_\R\otimes_\R 1 = H_\R$. Let $I$ be the involution for $H_\R \subset H$, that is, $I(\xi\otimes \lambda)=\xi\otimes \overline{\lambda}$ for $\xi\otimes \lambda\in H_\R \otimes_\R \C$. Note that $H_\R=\{ \xi\in H\mid I\xi = \xi \}$.

Assume first $q=1$. Then putting $W(\xi):= \ell(\xi)+\ell(\xi)^*$ for $\xi$, we can define the \textit{Gaussian algebra} by
	$$ \Gamma_q(H_\R):=\mathrm{W}^*\{ e^{\ri W(\xi)} \mid \xi\in H_\R\}. $$
Then $\Omega$ is a cyclic separating vector for $\Gamma_q(H_\R)$. In this case, $\Gamma_q(H_\R)$ is commutative and the vacuum state $\langle \,\cdot\, \Omega,\Omega \rangle$ corresponds the the one arising from (a product of) the Gaussian measure. See \cite[Subsection II.1]{Bo14} for these facts.

Assume next $q\in \{0,-1\}$. Let $U\colon \R\to \mathcal O(H_\R)$ be any strongly continuous representation of $\R$ on $H_\R$. We extend $U$ on $H$ by $U_t\otimes \id_{\C}$ for $t\in \R$ and use the same notation $U_t$. By Stone's theorem, take the infinitesimal generator $A$ satisfying $U_t=A^{\ri t}$ for all $t\in \R$. Consider an embedding
\begin{align*}
	&H \to H;\quad \xi \mapsto \frac{\sqrt{2}}{\sqrt{1+A^{-1}}}\, \xi =:\widehat{\xi}.
\end{align*}
We define the associated von Neumann algebra by
\begin{align*}
	& \Gamma_q(H_\R,U):= \mathrm{W}^*\{ W(\widehat{\xi}) \mid \xi \in H\},
\end{align*}
where $W(\widehat{\xi}):=\ell(\widehat{\xi}) + \ell^*(\widehat{I\xi})$. Then $\Omega$ is cyclic and separating, and the vacuum state $\langle \,\cdot\, \Omega,\Omega \rangle$ defines a faithful normal state on $\Gamma_q(H_\R,U)$. When $q=0$, $\Gamma_q(H_\R,U)$ is the \textit{free Araki--Woods factor} introduced in \cite{Sh97}. When $q=-1$ and if the action $U$ is almost periodic, $\Gamma_q(H_\R,U)$ is the classical \textit{Araki--Woods factor}, see \cite[Section 4]{HIK20}. 
For the modular theory of these von Neumann algebras, we refer the reader to \cite[Lemma 1.4]{Hi02}, which treat the case $q\in (-1,1)$ but the same proof works for $q=-1$. 

The following fact is important to us: in all cases $q\in\{1,0,-1\}$, the modular conjugation $J$ on $\cF_q(H)$ is given by
	$$ J(\xi_1\otimes \cdots\otimes \xi_n) = (I\xi_n)\otimes \cdots\otimes (I\xi_1)$$
for all $n\in \N$ and $\xi_1\ldots,\xi_n\in H$.

\section{Bi-exactness for groups acting on semigroups}\label{Bi-exactness for groups acting on semigroups}

\subsection{Examples from groups}
\label{Examples from groups}

We first review Ozawa's work on bi-exactness for some semidirect product groups.

\begin{exa}\upshape
	Let $\Lambda,\Gamma$ be (nontrivial) countable discrete groups and consider the wreath product 
	$$\Lambda\wr \Gamma = \left(\bigoplus_\Gamma \Lambda\right) \rtimes \Gamma.$$
Set $\cG := \{ \bigcup_{\rm finite}s\Gamma t\mid s,t\in \Lambda\wr \Gamma \}$. Then it is proved in \cite{Oz04} \cite[Proposition 15.3.6 and Corollary 15.3.9]{BO08} that, if $\Gamma$ is exact and $\Lambda$ is amenable, then $\Gamma$ is bi-exact$/\cG$. This means the left-right translation $\Gamma \times \Gamma \act \ell^\infty(\Gamma)/c_0(\cG)$ is amenable. When $\Gamma $ is bi-exact, then $\Lambda \wr \Gamma$ is also bi-exact.
\end{exa}
\begin{exa}\upshape
	Let $\Gamma := \mathrm{SL}(2,\Z)$ and $\Lambda := \Z^2$ and consider $\Lambda \rtimes \Gamma$ arising from the natural action $\Gamma \act \Lambda$. Set $\cG := \{ \bigcup_{\rm finite}s\Gamma t\mid s,t\in \Lambda\rtimes \Gamma \}$. Then $\Gamma$ is  bi-exact$/\cG$. Since $\Gamma$ is bi-exact, the group $\Lambda \rtimes \Gamma$ is bi-exact. See \cite{Oz08}.
\end{exa}

In both examples, semidirect product groups $\Lambda \rtimes \Gamma$ with families $\cG := \{ \bigcup_{\rm finite}s\Gamma t\mid s,t\in \Lambda\rtimes \Gamma \}$ of subsets in $\Lambda \rtimes \Gamma$ are considered. Here we briefly recall Ozawa's proofs. Later we give detailed proofs in our more general setting.

First, we consider 
\begin{align*}
	& \ell^\infty(\overline{\cG}) :=\{ f\in \ell^\infty(\Lambda\rtimes \Gamma)\mid f(s\,\cdot\, )-f,\  f(\,\cdot\, s)-f \in c_0(\cG)\text{ for all }s\in \Lambda\}.
\end{align*}
Then the left-right actions of $\Lambda$ on $\ell^\infty(\partial \cG):=\ell^\infty(\overline{\cG})/c_0(\cG)$ is trivial, hence it  can be regarded as a boundary for $\Lambda$-actions. Next we prove that the $\Gamma\times \Gamma$-action on $\ell^\infty(\partial{\cG})$ induced by the left-right translation of $\Gamma$ is amenable. Finally since $\ell^\infty(\partial{\cG})$ commutes with $C_\lambda(\Lambda)$ and $C_\rho(\Lambda)$, we have a $\Gamma\times \Gamma$-equivariant map
	$$ \ell^\infty(\partial{\cG})\otm C_\lambda(\Lambda)\otm C_\rho(\Lambda) \to \mathrm{M}(\K(\cG))/\K(\cG).$$
By the amenability of the action, we can extend this map to the reduced crossed product. By restriction, we get that 
	$$ C_\lambda(\Lambda\rtimes \Gamma)\otm C_\rho(\Lambda\rtimes \Gamma) \to \mathrm{M}(\K(\cG))/\K(\cG)$$
is min-bounded, hence $\Lambda\rtimes \Gamma$ is bi-exact$/\cG$. Note that the following algebra was also used to prove the amenability:
\begin{align*}
	 C(\overline{\cG}) :=\{ f\in \ell^\infty(\overline{\cG})\mid f(\,\cdot\, h)-f \in c_0(\cG)\text{ for all }h\in \Gamma\}.
\end{align*}
Note that in the above argument, we need the following objects.
\begin{itemize}
	\item We denote the inverse map $s\mapsto s^{-1}$ in $\Lambda\rtimes \Gamma$ by $I$. We have an anti-linear isometry $J$ on $\ell^2(\Lambda\rtimes \Gamma)$ given by $J\delta_s=\delta_{s^{-1}}$ for all $s\in \Lambda\rtimes \Gamma$. This restricts to the one $J_\Lambda$ on $\ell^2(\Lambda)$.

	\item If we write the action $\Gamma \act \Lambda$ by $\pi$, it naturally extends to a unitary representation $U^\pi\colon \Gamma \act \ell^2(\Lambda)$.

	\item Left and right regular representations $\lambda$ and $\rho$ of $\Lambda$ satisfy $\lambda_g\delta_s = \delta_{gs}$ and $\rho_g\delta_s = \delta_{sg^{-1}}$ for all $s,g\in \Lambda$.

	\item For all $g,h\in \Gamma$, $s,t\in \Lambda$, we have that (inside $\B(\ell^2(\Lambda))$)
\begin{align*}
	U^\pi_g \lambda_s U_g^{\pi *} = \lambda_{\pi_g(s)} ,\quad U^\pi_g \rho(s) U_g^{\pi *} = \rho_{\pi_g(s)},\quad 
	J_\Lambda \lambda_s J_\Lambda = \rho_{s}.
\end{align*}
\end{itemize}
We thus have $I,J_\Lambda,U_g^\pi,\lambda,\rho$ in this setting. 
Our aim of this section is to generalize the above framework to the ones for groups $\Gamma$ acting on \textit{semigroups} $S$ rather than $\Lambda$.

\subsection{Basic setting}\label{Basic setting}

	Let $S$ be a semigoup with unit $1_S$. We always assume the following two conditions:
\begin{itemize}
	\item (cancellative)  for any $x,y,s\in S$, $sx=sy$ or $xs=ys$ implies $x=y$;
	\item (involution) there is a bijection $I\colon S\to S$ such that 
\begin{align*}
I^2=\id_S \quad\text{and}\quad I(st)=I(t)I(s)\quad (s,t\in S).
\end{align*}

\end{itemize}
They are trivially satisfied when $S$ is a group. Let $\Gamma $ be a discrete group and consider an action $\Gamma\act^\pi S$ such that each $\pi_g$ for $g\in \Gamma$ preserves the structure of the involutive semigroup:
	$$\pi_g (ab) = \pi_g(a)\pi_g(b)\quad \text{for }a,b\in S\quad \text{and}\quad I\pi_g=\pi_gI.$$
Consider the semidirect product $S\rtimes \Gamma$ and we use the notation $(s,g)=sg=g \pi_g^{-1}(s)$ for $s\in S$ and $g\in \Gamma$ as an element in $S\rtimes \Gamma$. Observe that $S\rtimes \Gamma$ is also a cancellative semigroup with involution given by 
	$$I(s,g) := (\pi_g^{-1}(Is),g^{-1} ) =g^{-1}(Is).$$
Since $S\rtimes \Gamma=S\times \Gamma$ as a set, we have a natural inclusion
	$$\ell^\infty(S\rtimes \Gamma)=\ell^\infty(S)\ovt \ell^\infty(\Gamma) \subset \B(\ell^2(S)\otimes \ell^2(\Gamma)).$$
In this section, we always keep the following assumptions. They are objects in $\B(\ell^2(S))$ which correspond to structures of $S\rtimes \Gamma$ \textit{up to} multiples of $\T$ (or $\C$). Below we use the notation 
	$$a=_\C b\quad  \text{if there is $\lambda\in \C\setminus \{0\}$ such that }a=\lambda b,$$
and $a=_\T b$ if $a=\lambda b$ for $\lambda\in \T$.

\begin{assum}\label{assumption}\upshape
	Let $\Gamma\act^\pi S$ be as above. Assume the following conditions.
\begin{itemize}
	\item (Involution) There is an anti-linear isometry $J_S$ on $\ell^2(S)$ which satisfies 
	$$J_S \delta_x =_\T \delta_{Ix}\quad\text{for all }x\in S.$$

	\item (Unitary representation) There is a unitary representation $U^\pi\colon \Gamma \act \ell^2(S)$ such that
	$$ U^\pi_g\delta_{x} =_\T \delta_{\pi_g(x)}\quad\text{and}\quad U_g^\pi J_S= J_S U_g^\pi\quad  \text{for all }g\in \Gamma ,\ x\in S.$$

	\item (Creation operators) For any $s\in S$, there are $\ell(s),r(s)\in \B(\ell^2(S))$ such that for any $x\in X$,
\begin{align*}
	\ell(s)\delta_x=C_{s,x}\delta_{sx}\quad \text{and}\quad
r(s)\delta_x=C^r_{s,x}\delta_{xs}\quad \text{for some }C_{s,x},C_{s,x}^r\in \C,
\end{align*}
where $C_{s,x},C_{s,x}^r$ are possibly zero. We further assume
\begin{align*}
	\ell(s)\ell(t)=_\C \ell(st) \quad\text{and}\quad r(s)r(t)=_\C r(ts)\quad \text{for all }s,t\in S.
\end{align*}

	\item (Covariance conditions)	For all $g,h\in \Gamma$, $s,t\in S$, we have
\begin{align*}
	&U^\pi_g \ell(s) U_g^{\pi *}= \ell(\pi_g(s)) ,\quad U^\pi_g r(s) U_g^{\pi *} = r(\pi_g(s));\\
	&J_S \ell(s) J_S = r(Is) ,\quad J_S r(s) J_S = \ell(Is).
\end{align*}

	\item (Relativity) There is a family $\cG$ of subsets in $S\rtimes \Gamma$ such that
\begin{align*}
	E\cup F \in \cG, \quad  sEt \in \cG, \quad \text{and}\quad IE \in \cG,\quad \text{for all }E,F\in \cG, \ s,t\in S\rtimes \Gamma.
\end{align*}
Assume $\Gamma \not \in \cG$, so that $c_0(\cG)\subsetneq \ell^\infty(S\rtimes \Gamma)$.
\end{itemize}
\end{assum}

Observe that examples from groups in the previous subsection trivially satisfy all the assumptions (regular representations of $\Lambda$ correspond to creations). 
Our main examples are ones from creations operators on Fock spaces, see Subsection \ref{Fock spaces as semigroups}. 

Using the family $\cG$, we can define relative algebras:
\begin{align*}
	& c_0(\cG):=\{f\in \ell^\infty(S\rtimes \Gamma)\mid \lim_{x\to \infty/\cG}f(x)=0\};\\
	&\K(\cG):= \overline{ c_0(\cG)\B(\ell^2(S\rtimes \Gamma)) c_0(\cG)}^{\rm norm}.
\end{align*}
We denote by $\mathrm{M}(\K(\cG))$ the multiplier algebra of $\K(\cG)$. Consider the following representations, 
\begin{align*}
	\text{Left action}\quad &\pi_\ell\colon \B(\ell^2(S))\ni a\mapsto a\otimes 1_\Gamma;\qquad \Gamma \ni g \mapsto U_g \otimes \lambda_g;\\
	\text{Right action}\quad &\pi_r\colon \B(\ell^2(S)) \ni a \mapsto \sum_{h\in \Gamma}U_h^\pi  a  U_{h}^{\pi*}\otimes e_{h,h};\quad \Gamma \ni g \mapsto 1\otimes \rho_g;\\
	\text{$J$-map}\quad & J=\sum_{h\in \Gamma} U_h^\pi J_S\otimes e_{h,h^{-1}}.
\end{align*}
Note that this coincides with the one in Subsection \ref{Standard representations} when $S$ is a group. 

\begin{rem}\upshape
	We assumed that $\ell(s),r(s)$ are all bounded operators. In the case of symmetric Fock spaces, however, creation operators are all \textit{unbounded}. We will prove some bi-exactness results for this case, so we sometimes consider below the case that $\ell(x),r(x)$ are closed operators. This exactly means that if we regard constants $C_{s,x},C_{s,x}^r$ as functions on $x\in S$ denoted by $C_{s,\cdot}$ and $C^r_{s,\cdot}$, then they are (possibly) unbounded functions.

We also note that, if $a=\ell(s)$ for $s\in S$ is a closed operator, then $\pi_\ell(a)=a\otimes 1$ and $\pi_r(a)=J (J_SaJ_S\otimes 1) J$ are again closed, hence $\pi_\ell,\pi_r$ are defined on closed operators on $\ell^2(S)$. In particular $\pi_\ell(C_{s,\cdot})$ and $\pi_r(C^r_{s,\cdot})$ define closed operators.
\end{rem}

Before proceeding, we see two elementary lemmas. We omit most proofs.

\begin{lem}\label{lem-action}
The following conditions hold true.
\begin{enumerate}
	\item For all $g\in \Gamma$ and $a\in \B(\ell^2(S))$, we have
	$$J(U_g^\pi\otimes \lambda_g)J = 1\otimes \rho_g,\quad \pi_r(a)=J (J_SaJ_S\otimes 1) J.$$

	\item $\Ad(U_g\otimes \lambda_g)$, $\Ad(1\otimes \rho_g)$ for $g\in \Gamma$, and $\Ad J$ restrict to automorphisms on $\ell^\infty(S\rtimes \Gamma)$, which correspond to left, right translations by $g$ and the involution $I$. More precisely, 
\begin{align*}
	& \Ad(U_g^\pi\otimes \lambda_g\rho_h)f = f(g^{-1}\, \cdot \, h),\quad \text{for }f\in \ell^\infty(S\rtimes \Gamma);\\
	& \Ad(J)f = \overline{f\circ I} ,\qquad \text{for }f\in \ell^\infty(S\rtimes \Gamma). 
\end{align*}
It follows that
\begin{itemize}
	\item $\Ad(U_g^\pi\otimes \lambda_g\rho_h)$ and $\Ad(J)$ globally preserve $c_0(\cG)$ and $\K(\cG)$;

	\item for any (possibly unbounded) map $f\colon S\to \C$,
	$$\pi_\ell(f)=f\otimes 1,\quad \pi_r(f)= (f\circ I_S\otimes 1)\circ I$$
can be regarded as maps on $S\rtimes \Gamma$.

\end{itemize}
\end{enumerate}
\end{lem}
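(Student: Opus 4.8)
The plan is to prove both assertions by a direct computation on the orthonormal basis $\{\delta_x\otimes\delta_k = \delta_{(x,k)} : x\in S,\ k\in\Gamma\}$ of $\ell^2(S\rtimes\Gamma)=\ell^2(S)\otimes\ell^2(\Gamma)$, using only the defining relations from Assumption \ref{assumption}: the up-to-phase formulas $U_g^\pi\delta_x =_\T\delta_{\pi_g(x)}$ and $J_S\delta_x =_\T\delta_{Ix}$, the commutation $U_g^\pi J_S = J_S U_g^\pi$, the group laws $U_g^\pi U_h^\pi = U_{gh}^\pi$, $\lambda_g\delta_k=\delta_{gk}$, $\rho_g\delta_k=\delta_{kg^{-1}}$, the matrix-unit rule $e_{h,h'}\delta_k = \delta_{h',k}\,\delta_h$, and the semidirect-product identities $(x,k)=xk$ and $I(x,k)=(\pi_k^{-1}(Ix),k^{-1})$. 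As a preliminary step I would record that $J$ implements $I$ up to a phase: applying $J=\sum_h U_h^\pi J_S\otimes e_{h,h^{-1}}$ to $\delta_x\otimes\delta_k$ selects $h=k^{-1}$ and yields $J(\delta_x\otimes\delta_k) =_\T \delta_{\pi_{k^{-1}}(Ix)}\otimes\delta_{k^{-1}} = \delta_{I(x,k)}$, and that $J^2 = J_S^2\otimes 1 = 1$ (here I use $J_S^2=1$, which holds for the modular conjugations in all cases of interest).

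For Assertion 1, I would evaluate $J(U_g^\pi\otimes\lambda_g)J$ on $\delta_x\otimes\delta_k$: the first $J$ produces $U_{k^{-1}}^\pi J_S\delta_x\otimes\delta_{k^{-1}}$, applying $U_g^\pi\otimes\lambda_g$ gives $U_{gk^{-1}}^\pi J_S\delta_x\otimes\delta_{gk^{-1}}$, and the final $J$ selects $h=kg^{-1}$ and, after moving $J_S$ through $U^\pi$ and using $J_S^2=1$, collapses the $S$-component to $\delta_x$, leaving $\delta_x\otimes\delta_{kg^{-1}} = (1\otimes\rho_g)(\delta_x\otimes\delta_k)$. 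The identity $\pi_r(a)=J(J_S a J_S\otimes 1)J$ is obtained by the same bracketing: conjugating $J_S a J_S\otimes 1$ by $J$ on $\delta_x\otimes\delta_k$ isolates $h=k$ and, using $J_S U_{k^{-1}}^\pi=U_{k^{-1}}^\pi J_S$ and $J_S^2=1$, returns $U_k^\pi a U_k^{\pi*}\delta_x\otimes\delta_k$, which is exactly the $h=k$ term of $\pi_r(a)=\sum_h U_h^\pi a U_h^{\pi*}\otimes e_{h,h}$. The phase ambiguities appear only in the intermediate basis vectors and cancel because each operator is applied together with its adjoint.

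For Assertion 2, the key observation is that $U_g^\pi\otimes\lambda_g\rho_h$ and $J$ are \emph{weighted permutation} operators: the former sends $\delta_{(x,k)} =_\T \delta_{(\pi_g(x),\,gkh^{-1})} = \delta_{g(x,k)h^{-1}}$ and is linear, while $J$ sends $\delta_y =_\T \delta_{Iy}$ and is anti-linear. Conjugating a diagonal multiplication operator $M_f$, $f\in\ell^\infty(S\rtimes\Gamma)$, by such an operator permutes the diagonal and cancels all phases, giving $\Ad(U_g^\pi\otimes\lambda_g\rho_h)M_f = M_{f\circ\sigma^{-1}}$ with $\sigma(y)=gyh^{-1}$, i.e.\ $f\mapsto f(g^{-1}\,\cdot\,h)$, and $\Ad(J)M_f = M_{\overline{f\circ I}}$, where the complex conjugate comes from the anti-linearity of $J$. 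For the two consequences: translations $y\mapsto gyh^{-1}$ and the involution $I$ send sets small$/\cG$ to sets small$/\cG$ by the Relativity clause of Assumption \ref{assumption} ($sEt\in\cG$ and $IE\in\cG$), so these automorphisms preserve $c_0(\cG)$ --- as in Lemma \ref{c0-lemma}(4) --- and hence preserve $\K(\cG)=\overline{c_0(\cG)\B(\ell^2(S\rtimes\Gamma))c_0(\cG)}$, since $\Ad$ of a unitary carrying $c_0(\cG)$ onto itself carries this closure onto itself. Finally, for a (possibly unbounded) $f\colon S\to\C$ viewed as a diagonal operator, $\pi_\ell(f)=f\otimes 1$ is immediate, and specializing $\pi_r(a)=J(J_S a J_S\otimes 1)J$ to $a=M_f$ gives, after the two conjugations cancel, the function $(x,k)\mapsto f(\pi_{k^{-1}}(x))$, which is precisely $((f\circ I_S)\otimes 1)\circ I$ by the formula $I(x,k)=(\pi_k^{-1}(Ix),k^{-1})$ together with $I\pi_k=\pi_k I$.

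The computations are routine, so the only real care is bookkeeping: tracking the $\T$-phases through products of anti-linear maps and verifying that they cancel to yield the stated \emph{exact} operator identities in Assertion 1, and correctly accounting for the two complex conjugations (one from $J_S$, one from $J$) that combine to give the honest, conjugate-free function $((f\circ I_S)\otimes 1)\circ I$ in the last formula. I expect this phase- and conjugation-tracking to be the main, if modest, obstacle; it is also where the hypothesis $J_S^2=1$ is used to upgrade the ``up to $\T$'' relations to genuine equalities.
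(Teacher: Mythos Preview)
Your proposal is correct and is exactly the routine basis-vector computation the paper has in mind; indeed the paper omits the proof entirely, declaring these ``two elementary lemmas'' and writing ``We omit most proofs.'' Your explicit remark that the exact identities in item 1 require $J_S^2=1$ (tacitly used but not listed in Assumption~\ref{assumption}) is a worthwhile observation, and your phase-cancellation bookkeeping is precisely the point.
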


\begin{lem}\label{lem-basic-semigp}
	Fix $s\in S$ and assume that $\ell(s),r(s)$ are possibly unbounded. The following conditions hold true.
\begin{enumerate}
	\item For any map $f\in\ell^\infty(S\rtimes \Gamma)$, we have
\begin{align*}
	& f\pi_\ell(\ell(s))= \pi_\ell(\ell(s))f(s\,\cdot \, ) ,\quad \pi_\ell(\ell(s))f=f(s^{-1}\, \cdot \, )1_{s (S\rtimes \Gamma)}\pi_\ell(\ell(s));\\
	&f\pi_r(r(s))= \pi_r(r(s)) f(\cdot \, s \, ),\quad  \pi_r(r(s)) f=f(\, \cdot \, s^{-1})1_{ (S\rtimes \Gamma)s}\pi_r(r(s)).
\end{align*}
where $1_X$ means the characteristic function on a subset $X\subset S\rtimes \Gamma$.

	\item For any $f\in\ell^\infty(S\rtimes \Gamma)$, the following implications hold:
\begin{align*}
	&\pi_\ell(C_{s,\cdot})(f-f(s\,\cdot \, ))\in c_0(\cG)\quad \Rightarrow \quad [\pi_\ell(\ell(s)) , f]\in \K(\cG);\\
	&\pi_r(C^r_{s,\cdot})(f-f(\,\cdot \, s))\in c_0(\cG)\quad \Rightarrow \quad [\pi_r(r(s)) , f]\in \K(\cG).
\end{align*}
In particular $\pi_\ell(\ell(s)),\pi_r(r(s))\in \mathrm{M}(\K(\cG))$ if $\ell(s),r(s)$ are bounded.
\end{enumerate}

\end{lem}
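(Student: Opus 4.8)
The plan is to reduce everything to explicit computation on the canonical basis $\{\delta_w\}_{w\in S\rtimes\Gamma}$ of $\ell^2(S\rtimes\Gamma)=\ell^2(S)\otimes\ell^2(\Gamma)$, after first observing that the operators in question are weighted translations. Using the covariance relation $U_h^\pi r(s)U_h^{\pi*}=r(\pi_h(s))$ I rewrite $\pi_r(r(s))=\sum_{h}r(\pi_h(s))\otimes e_{h,h}$, and a direct check gives $\pi_\ell(\ell(s))\delta_{(x,g)}=C_{s,x}\delta_{(sx,g)}=C_{s,x}\delta_{s\cdot(x,g)}$ and $\pi_r(r(s))\delta_{(x,g)}=C^r_{\pi_g(s),x}\delta_{(x,g)\cdot s}$, where I use the semidirect-product law $(s,e)(x,g)=(sx,g)$ and $(x,g)(s,e)=(x\pi_g(s),g)$. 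Thus $\pi_\ell(\ell(s))$ and $\pi_r(r(s))$ are exactly the left- and right-weighted translations by $s$ on $S\rtimes\Gamma$.

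For part 1, I evaluate each of the four asserted identities on an arbitrary $\delta_{(x,g)}$. The two identities $f\pi_\ell(\ell(s))=\pi_\ell(\ell(s))f(s\,\cdot\,)$ and $f\pi_r(r(s))=\pi_r(r(s))f(\,\cdot\, s)$ fall out immediately from the displayed formulas together with the reading of $f(s\,\cdot\,)$ as precomposition with left multiplication. The other two, $\pi_\ell(\ell(s))f=f(s^{-1}\,\cdot\,)1_{s(S\rtimes\Gamma)}\pi_\ell(\ell(s))$ and its right analogue, only require interpreting $f(s^{-1}\,\cdot\,)1_{s(S\rtimes\Gamma)}$ correctly: by left cancellativity every point of $s(S\rtimes\Gamma)$ is uniquely $s\cdot z$, so $f(s^{-1}\,\cdot\,)$ is well defined there, and since $\pi_\ell(\ell(s))$ lands in $\overline{\operatorname{span}}\{\delta_w:w\in s(S\rtimes\Gamma)\}$ the cut-off $1_{s(S\rtimes\Gamma)}$ acts as the identity on its range. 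This is routine bookkeeping.

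The core is part 2. Write $T_s$ for the translation isometry $\delta_{(x,g)}\mapsto\delta_{(sx,g)}$ (an isometry by left cancellativity, hence bounded even when $\ell(s)$ is not), and factor $\pi_\ell(\ell(s))=T_s\,\pi_\ell(C_{s,\cdot})$. Part 1 yields the commutator identity $[\pi_\ell(\ell(s)),f]=\pi_\ell(\ell(s))\big(f-f(s\,\cdot\,)\big)=T_s\psi$ with $\psi:=\pi_\ell(C_{s,\cdot})(f-f(s\,\cdot\,))$, which lies in $c_0(\cG)$ by hypothesis (and is bounded, so $T_s\psi$ is meaningful even when $\ell(s)$ is unbounded). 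Since $c_0(\cG)$ is a C$^*$-algebra I factor $\psi=\psi_1\psi_2$ with $\psi_1,\psi_2\in c_0(\cG)$. The decisive step is the intertwining $T_s\psi_1=\tilde\psi_1 T_s$ with $\tilde\psi_1:=\psi_1(s^{-1}\,\cdot\,)1_{s(S\rtimes\Gamma)}$, where $\tilde\psi_1\in c_0(\cG)$ because its support equals $s\cdot\supp\psi_1$, which is small$/\cG$ by the relativity axiom $sE\in\cG$. Hence $[\pi_\ell(\ell(s)),f]=\tilde\psi_1\,T_s\,\psi_2\in c_0(\cG)\,\B(\ell^2(S\rtimes\Gamma))\,c_0(\cG)\subset\K(\cG)$. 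The right-creation case is identical with $T_s$ replaced by the right-translation isometry, or can be deduced from the left case by conjugating with $J$ and using Lemma \ref{lem-action} together with $J_S\ell(s)J_S=r(Is)$, which give $J\pi_\ell(\ell(s))J=\pi_r(r(Is))$.

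For the final clause I apply part 2: when $\ell(s)$ is bounded the weight $C_{s,\cdot}$ is a bounded function, so by Lemma \ref{lem-multiplier} it suffices to verify the hypothesis $\pi_\ell(C_{s,\cdot})(f-f(s\,\cdot\,))\in c_0(\cG)$ for every $f\in c_0(\cG)$. The term $\pi_\ell(C_{s,\cdot})f$ lies in $c_0(\cG)$ at once, as $c_0(\cG)$ is an ideal of $\ell^\infty(S\rtimes\Gamma)$ and $C_{s,\cdot}$ is bounded. The genuinely delicate point — and the step I expect to be the main obstacle — is that $\pi_\ell(C_{s,\cdot})f(s\,\cdot\,)$ again lies in $c_0(\cG)$: its support is governed by the \emph{preimage} $\{z:sz\in\supp f\}$, and the relativity axiom as stated closes $\cG$ only under forward translations $E\mapsto sEt$, not under such preimages (indeed, one checks that $\pi_\ell(\ell(s))\K(\cG)\subset\K(\cG)$ holds unconditionally, whereas $\K(\cG)\pi_\ell(\ell(s))\subset\K(\cG)$ is exactly what this preimage condition controls). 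Here I would invoke the concrete structure of $\cG$ in the examples, where prepending (resp.\ appending) $s$ strictly enlarges the relevant notion of size so that preimages of small sets remain small; I would isolate this as a properness property of $\cG$ and verify it in the later sections. The right-creation multiplier statement then follows symmetrically.
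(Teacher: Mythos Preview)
Your treatment of parts 1 and 2 is correct and essentially the paper's argument. For part 2 the paper also factors $\pi_\ell(\ell(s))=T_s\,\pi_\ell(C_{s,\cdot})$ with $T_s=\ell(s)C_{s,\cdot}^{-1}$ an isometry, obtaining $[\pi_\ell(\ell(s)),f]=T_s\psi\in\B\,c_0(\cG)$; it then invokes the \emph{second} identity of part 1 to rewrite the same commutator as $\tilde\psi\,T_s\in c_0(\cG)\,\B$ (the ``similar argument''), and membership in both $\B\,c_0(\cG)$ and $c_0(\cG)\,\B$ forces membership in $\K(\cG)$ via approximate units. Your one-shot sandwich $\tilde\psi_1 T_s\psi_2$ obtained from Cohen factorization $\psi=\psi_1\psi_2$ and the intertwining $T_s\psi_1=\tilde\psi_1T_s$ is a slight streamlining of the same idea; it uses only the first identity of part 1 and the forward-translation closure $sE\in\cG$. (One cosmetic point: $\supp\psi_1$ need not itself be small$/\cG$; what you actually use is that $\{|\tilde\psi_1|>\varepsilon\}=s\cdot\{|\psi_1|>\varepsilon\}$ is small for every $\varepsilon>0$.)

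On the final clause you have put your finger on a genuine issue. The paper's proof is literally the single sentence ``The last statement follows by Lemma~\ref{lem-multiplier}'', which via part 2 requires $\pi_\ell(C_{s,\cdot})\,f(s\,\cdot\,)\in c_0(\cG)$ for $f\in c_0(\cG)$ --- exactly the preimage-closure property you flag, and which is \emph{not} implied by the relativity axiom as stated. So this is not a defect of your argument relative to the paper's; the paper glosses over the same point. Your proposed remedy (isolate it as an extra axiom and verify it in the concrete families $\cG_\ast$) is the right one, and it does hold there: by Lemma~\ref{length-convergence-lem} one has $|sz|_\ast\to\infty$ iff $|z|_\ast\to\infty$, since left (or right) multiplication by a fixed $s\in\sfY$ changes $|\cdot|_0$ and $|\cdot|_1$ by quantities bounded independently of $z$, so $f(s\,\cdot\,)\in c_0(\cG_\ast)$ whenever $f\in c_0(\cG_\ast)$.
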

\begin{proof}
	1. Apply them to $\delta_x\otimes \delta_s$ for $x\in S$ and $g\in \Gamma$.

	2. We see the case $\pi_\ell$. Observe that $\ell(s)C_{s,\cdot}^{-1}$ is a well defined isometry. Then item 1 implies 
\begin{align*}
	[\pi_\ell(\ell(s)) , f] 
	= \pi_\ell(\ell(s))(f-f(s\,\cdot \, ) = \pi_\ell(\ell(s)C_{s,\cdot}^{-1}) \pi_\ell(C_{s,\cdot})(f-f(s\,\cdot \, ) ).
\end{align*}
Since $\pi_\ell(\ell(s)C_{s,\cdot}^{-1})$ is an isometry, if $ \pi_\ell(C_{s,\cdot})(f-f(s\,\cdot \, ) )\in c_0(\cG)$, we have $[\pi_\ell(\ell(s)) , f]\in \B(\ell^2(S\rtimes \Gamma))c_0(\cG)$. A similar argument works for $ [\pi_\ell(\ell(s)) , f] = (f(s^{-1}\, \cdot \, )1_{s(S\rtimes \Gamma)}-f)\pi_\ell(\ell(s))$, hence we get the conclusion. The last statement follows by Lemma \ref{lem-multiplier}.
\end{proof}

\subsection*{Boundary C$^*$-algebras and associated actions}

Keep Assumption \ref{assumption} and define C$^*$-algebras by
\begin{align*}
	&\ell^\infty(\overline{\cG}):=\{ f\in \ell^\infty(S\rtimes \Gamma)\mid \pi_\ell(C_{s,\cdot})(f-f(s\,\cdot \, )),\pi_r(C^r_{s,\cdot})(f-f(\,\cdot \, s))\in c_0(\cG)\ \text{for all }s\in S\};\\
	&C(\overline{\cG}):=\{ f\in \ell^\infty(\overline{\cG})\mid f-f(\,\cdot \, h)\in c_0(\cG)\ \text{for all }h\in \Gamma\}.
\end{align*}
Here $\ell^\infty(\overline{\cG})$ should be regarded as a boundary for left-right creations (and $C(\overline{\cG})$ for the right $\Gamma$-action). Since they contain $c_0(\cG)$, we can define the following \textit{boundary} C$^*$-algebras
\begin{align*}
	 \ell^\infty(\partial \cG):=\ell^\infty(\overline{\cG})/c_0(\cG), \quad
	  C(\partial \cG):=C(\overline{\cG})/c_0(\cG).
\end{align*}

\begin{lem}\label{lem-boundary-action}
The map $\Ad(J)$ globally preserves $\ell^\infty(\overline{\cG})$. For any $g\in \Gamma$, $\Ad(U_g^\pi\otimes \lambda_g)$ and $\Ad(1\otimes \rho_g)$ globally preserves $ \ell^\infty(\overline{\cG})$ and $C(\overline{\cG})$.
\end{lem}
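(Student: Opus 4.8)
The plan is to verify directly that each of the three automorphisms carries the defining conditions of $\ell^\infty(\overline{\cG})$ (and, for the two translations, of $C(\overline{\cG})$) into themselves. By Lemma \ref{lem-action}, on $f\in\ell^\infty(S\rtimes\Gamma)$ the three maps act by $\Ad(U_g^\pi\otimes\lambda_g)f=f(g^{-1}\,\cdot\,)$, $\Ad(1\otimes\rho_g)f=f(\,\cdot\, g)$, and $\Ad(J)f=\overline{f\circ I}$, and all three globally preserve $c_0(\cG)$. So, writing $\alpha$ for any one of them, it will suffice to show that for every $s\in S$ the two defining differences $\pi_\ell(C_{s,\cdot})(\alpha f-\alpha f(s\,\cdot\,))$ and $\pi_r(C^r_{s,\cdot})(\alpha f-\alpha f(\,\cdot\, s))$ lie in $c_0(\cG)$ whenever $f\in\ell^\infty(\overline{\cG})$, and that $\alpha f-\alpha f(\,\cdot\, h)\in c_0(\cG)$ for all $h\in\Gamma$ whenever $f\in C(\overline{\cG})$ (for $\alpha$ a translation).

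The computation will rest on two ingredients. First, a difference identity coming from the semidirect-product relations $g^{-1}s=\pi_{g^{-1}}(s)g^{-1}$, $sg=g\pi_{g^{-1}}(s)$ (for $s\in S$, $g\in\Gamma$) together with $I(st)=I(t)I(s)$: one rewrites each difference as $\alpha$ applied to a difference of the same shape for $f$, with a transformed parameter. For left translation this gives $\alpha f-\alpha f(s\,\cdot\,)=\alpha\bigl(f-f(\pi_{g^{-1}}(s)\,\cdot\,)\bigr)$ and $\alpha f-\alpha f(\,\cdot\, s)=\alpha\bigl(f-f(\,\cdot\, s)\bigr)$, with the symmetric statements for right translation, whereas for $\Ad(J)$ one finds $\alpha f-\alpha f(s\,\cdot\,)=\alpha\bigl(f-f(\,\cdot\, Is)\bigr)$ and $\alpha f-\alpha f(\,\cdot\, s)=\alpha\bigl(f-f(Is\,\cdot\,)\bigr)$, so the involution interchanges the left and the right conditions. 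Second, I would commute the diagonal multiplication operator through $\alpha$: conjugation of a diagonal operator yields, exactly, $\alpha^{-1}\pi_\ell(C_{s,\cdot})=\pi_\ell(C_{s,\pi_g(\cdot)})$ for left translation (which fixes $\pi_r$), the symmetric statement for right translation, and $\Ad(J)\pi_\ell(C_{s,\cdot})=\pi_r(\overline{C_{s,I\cdot}})$ via $\pi_r(a)=J(J_SaJ_S\otimes 1)J$ from Lemma \ref{lem-action}. Reading the covariance relations $U_g^\pi\ell(s)U_g^{\pi*}=\ell(\pi_g(s))$ and $J_S\ell(s)J_S=r(Is)$ off on basis vectors then produces the modulus identities $|C_{s,\pi_g(\cdot)}|=|C_{\pi_{g^{-1}}(s),\cdot}|$ and $|\,\overline{C_{s,I\cdot}}\,|=|C^r_{Is,\cdot}|$, and likewise for $C^r$.

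Combining the two ingredients, each transformed difference equals $\alpha$ applied to the defining difference of $f$ for the transformed parameter, with an extra diagonal operator inserted: since the relevant moduli agree, the two diagonal functions differ by a unimodular function $\theta$, so that e.g. $\pi_\ell(C_{s,\pi_g(\cdot)})=\pi_\ell(\theta)\pi_\ell(C_{\pi_{g^{-1}}(s),\cdot})$ with $\pi_\ell(\theta)$ a diagonal unitary (and the analogous factorizations for $\pi_r$ and for $\overline{C_{s,I\cdot}}$ versus $C^r_{Is,\cdot}$). A diagonal phase operator is a bounded element of $\ell^\infty(S\rtimes\Gamma)$, in which $c_0(\cG)$ is an ideal by Lemma \ref{c0-lemma}, and $\alpha$ preserves $c_0(\cG)$; hence the hypothesis $f\in\ell^\infty(\overline{\cG})$ forces each transformed difference into $c_0(\cG)$. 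The $C(\overline{\cG})$-condition is handled the same way under the translations, since $\alpha f-\alpha f(\,\cdot\, h)=\alpha\bigl(f-f(\,\cdot\, h)\bigr)$ for left translation and $=\alpha\bigl(f-f(\,\cdot\, g^{-1}hg)\bigr)$ for right translation, both in $c_0(\cG)$; this is consistent with $\Ad(J)$ being asserted to preserve only $\ell^\infty(\overline{\cG})$, as $J$ swaps the two sides while $C(\overline{\cG})$ imposes only right $\Gamma$-invariance.

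The main obstacle I expect is the bookkeeping of the phase ($=_\T$) ambiguities together with the fact that the moduli $C_{s,\cdot}$ may be \emph{unbounded} in the symmetric case. These two features are compatible precisely because the phases enter only through diagonal unitaries, which preserve the ideal $c_0(\cG)$, and because the moduli appearing before and after applying $\alpha$ agree up to such a unitary: one therefore never multiplies a $c_0(\cG)$-function by an unmatched unbounded factor, and the relevant element already belongs to $c_0(\cG)$ by the hypothesis on $f$. Keeping track of exactly which diagonal conjugations are phase-free (those by $U_g^\pi$ and $J_S$ on diagonal operators) versus which only yield modulus equalities (the covariance relations) is the delicate point, but each step reduces to the elementary $c_0(\cG)$-ideal property once the identities above are in hand.
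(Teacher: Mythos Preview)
Your proposal is correct and follows essentially the same route as the paper: conjugate the defining expression $\pi_\ell(C_{s,\cdot})(f-f(s\,\cdot\,))$ (resp.\ its $\pi_r$ counterpart) by $J$ or by $U_g^\pi\otimes\lambda_g\rho_h$, identify the result via the covariance relations $J_S\ell(s)J_S=r(Is)$ and $U_g^\pi\ell(s)U_g^{\pi*}=\ell(\pi_g(s))$, and use that $c_0(\cG)$ is an ideal preserved by these conjugations. You are in fact more careful than the paper about the unimodular phase factors: the paper writes $C_{s,I\cdot}=\overline{C^r_{Is,\cdot}}$ as an exact identity, whereas (because $J_S\delta_x=_\T\delta_{Ix}$ and $U_g^\pi\delta_x=_\T\delta_{\pi_g(x)}$ only up to phases) one really only gets agreement of moduli, which---as you correctly note---is all that is needed since the discrepancy is a diagonal unitary multiplying an element already in $c_0(\cG)$.
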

\begin{proof}
	Let $f\in \ell^\infty(\overline{\cG})$ and put $F:=JfJ=\overline{f\circ I}$. Then 
\begin{align*}
	 \pi_r(C_{s,I\cdot})(F - F ( \,\cdot \, (Is)))= J\pi_\ell(C_{s,\cdot})(f-f(s\,\cdot \, ))J\in Jc_0(\cG)J=c_0(\cG). 
\end{align*}
Since $C_{s,I\cdot}=\overline{C^r_{Is,\cdot}}$ (which comes from $\ell(s)=J_Sr(Is)J_S$), by putting $Is=:t$, we have
	$$\pi_r(\overline{C^r_{t,\cdot}})(F - F ( \,\cdot \, t))\in c_0(\cG).$$
This means $F$ satisfies the condition for $\pi_r$. Similarly $F$ satisfies the desired condition for $\pi_\ell$, hence $F\in \ell^\infty(\overline{\cG})$. For group actions of $\Gamma$, we can use a similar argument (use the equation $U_g^\pi \ell(s)U_g^{\pi *} = \ell(\pi_g(s))$).
\end{proof}

By the lemma, we get group actions arising from adjoint maps: for $g,h\in \Gamma$,
\begin{align*}
	\Ad(U_g^\pi\otimes \lambda_g\rho_h)&\colon \Gamma\times \Gamma \act \ell^\infty(\partial \cG);\\
	\Ad(U_g^\pi\otimes \lambda_g)&\colon \Gamma \act C(\partial \cG).
\end{align*}
By Lemma \ref{lem-action}, they coincide with the ones arising from translation actions on $S\rtimes \Gamma$.

\subsection{Bi-exactness and condition (AO)}
\label{Bi-exactness and condition (AO)}

We now define bi-exactness in our setting. We keep $S,\Gamma,\cG$ in assumption \ref{assumption}.

\begin{df}\upshape
	We say that $\Gamma \act^\pi S$ is \textit{bi-exact relative to $\cG$} (or \textit{bi-exact/$\cG$} in short) if the $\Gamma$-action $\Gamma \times \Gamma \act \ell^\infty(\partial\cG)$ arising from the left and right translations is (topologically) amenable.
\end{df}
\begin{rem}\upshape
	As we will explain at the last part in this subsection, for certain examples, we should exchange $S$ with a subset $S_0\subset S$.
\end{rem}

Note that this notion obviously depends on other objects such as $J_S,U^\pi,\ell,r$. Note that $\Gamma$ is automatically exact since it has a (topologically) amenable action.

Here is a sufficient condition of bi-exactness. We will indeed check this condition later.

\begin{lem}\label{bi-exact-lem}
	If the left translation $\Gamma \act C(\partial{\cG})$ is amenable, then $\Gamma \act S$ is bi-exact/$\cG$.
\end{lem}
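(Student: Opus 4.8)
The plan is to deduce amenability of the full left--right action $\Gamma\times\Gamma\act\ell^\infty(\partial\cG)$, which is exactly the definition of bi-exactness$/\cG$, from the one-sided hypothesis by realizing $\Gamma\times\Gamma$ as an extension and invoking permanence of topological amenability. The first thing I would record is a structural observation: by the very definition of $C(\overline\cG)$ together with the computation $\Ad(1\otimes\rho_h)f=f(\,\cdot\,h)$ from Lemma \ref{lem-action}, a class in $\ell^\infty(\partial\cG)$ lies in $C(\partial\cG)$ \emph{iff} it is fixed by the right translation $\{e\}\times\Gamma$. In other words
\[
 C(\partial\cG)=\ell^\infty(\partial\cG)^{\{e\}\times\Gamma},
\]
and the residual action of the quotient $(\Gamma\times\Gamma)/(\{e\}\times\Gamma)\cong\Gamma$ on this fixed-point algebra is precisely the left translation, which is amenable by hypothesis.

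Next I would upgrade the one-sided amenability from $C(\partial\cG)$ to all of $\ell^\infty(\partial\cG)$. Since the inclusion $C(\partial\cG)\hookrightarrow\ell^\infty(\partial\cG)$ is unital and equivariant for the left translation (Lemma \ref{lem-boundary-action}), the standard functoriality of topological amenability under equivariant unital inclusions---pull back a continuous field of probability measures along the induced surjection of spectra (see \cite[\S4.4]{BO08})---shows that the \emph{left} action $\Gamma\act\ell^\infty(\partial\cG)$ is amenable. I would then transport this to the right: by Lemmas \ref{lem-boundary-action} and \ref{lem-action}, $\Ad(J)$ preserves $\ell^\infty(\overline\cG)$ and $c_0(\cG)$ and satisfies $J(U_g^\pi\otimes\lambda_g)J=1\otimes\rho_g$, so it induces a (conjugate-linear) $\ast$-automorphism of $\ell^\infty(\partial\cG)$, i.e.\ a homeomorphism of $Y:=\operatorname{Spec}\ell^\infty(\partial\cG)$, that conjugates the left $\Gamma$-action into the right one. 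Hence the \emph{right} action $\{e\}\times\Gamma\act\ell^\infty(\partial\cG)$ is amenable as well.

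Finally I would assemble these pieces at the level of transformation groupoids over $Y$. Regard $\{e\}\times\Gamma$ as a normal subgroup of $\Gamma\times\Gamma$ with quotient the left copy $\Gamma$. The subgroupoid $(\{e\}\times\Gamma)\ltimes Y$ is amenable by the previous paragraph, while the quotient groupoid is $\Gamma\ltimes\operatorname{Spec}C(\partial\cG)$, which is amenable by hypothesis. By the permanence of topological amenability under such extensions of transformation groupoids (the topological counterpart of ``amenable-by-amenable is amenable'', in the spirit of Anantharaman--Delaroche's work on amenable groupoids), $(\Gamma\times\Gamma)\ltimes Y$ is amenable; that is, $\Gamma\times\Gamma\act\ell^\infty(\partial\cG)$ is amenable, as desired.

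I expect the extension step in the last paragraph to be the main obstacle, while the first two paragraphs are immediate from the already-established equivariance lemmas and routine functoriality. The delicate points are, first, that combining a one-sided amenable action with an amenable action on the right-fixed quotient genuinely requires extension permanence rather than a naive product of means (which fails precisely because the right copy acts trivially on $C(\partial\cG)$), and, second, that the unit space of the quotient groupoid must be identified with $\operatorname{Spec}C(\partial\cG)$ rather than with a literal orbit space, since such orbit spaces need not be Hausdorff. A cleaner but essentially equivalent route, phrased via crossed products, would use that amenability of the right action makes $\ell^\infty(\partial\cG)\rtimes_{\rm red}(\{e\}\times\Gamma)$ nuclear and then feed the central inclusion $C(\partial\cG)\subset \ell^\infty(\partial\cG)\rtimes_{\rm red}(\{e\}\times\Gamma)$ and the hypothesis into Proposition \ref{prop-nuclear}; there the subtle step is to verify that the residual left action is amenable on the relevant center, which is again where the real work concentrates.
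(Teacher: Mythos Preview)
Your proposal is correct, and your ``cleaner alternative route'' at the end is essentially the paper's proof. The paper avoids the groupoid extension machinery entirely and works directly with nuclearity of iterated crossed products, exactly as you sketch in your last paragraph, but with the roles of left and right swapped and with one key simplification that dissolves what you flag as the ``subtle step''.

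Concretely, the paper first uses the hypothesis and the equivariant inclusion $C(\partial\cG)\subset\ell^\infty(\partial\cG)$ (your Step~2) to conclude that the \emph{left} crossed product $\ell^\infty(\partial\cG)\rtimes_{\rm red}\Gamma$ is nuclear. It then introduces the auxiliary algebra $JC(\partial\cG)J\subset\ell^\infty(\partial\cG)$, the set of classes on which the \emph{left} translation is trivial; by your $J$-conjugation (Step~3) the right translation on it is amenable. Since the left translation acts trivially there, $JC(\partial\cG)J$ sits in the center of $\ell^\infty(\partial\cG)\rtimes_{\rm red}\Gamma$, so the right action on this crossed product is amenable by the paper's definition (amenability on a central subalgebra), and the double crossed product is nuclear. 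This is precisely the verification you anticipated having to do, and the trick that makes it immediate is to look at $JC(\partial\cG)J$ rather than $C(\partial\cG)$ inside the first crossed product.

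What each approach buys: your groupoid formulation is conceptually clean and would generalize to non-normal situations, but it imports a permanence theorem whose precise form (with $\operatorname{Spec}C(\partial\cG)$ replacing the possibly non-Hausdorff orbit space) you would need to source carefully. The paper's two-line crossed-product argument is entirely self-contained given the ambient definition of amenable action, and it sidesteps any discussion of quotient groupoids or orbit spaces.
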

\begin{proof}
	Consider the C$^*$-algebra 
\begin{align*}
	&JC(\overline{\cG})J=\{ f\in \ell^\infty(\overline{\cG})\mid f-f(g \,\cdot \, )\in c_0(\cG)\ \text{for all }g\in \Gamma\}.
\end{align*}
This is a boundary for the left $\Gamma$-action. Since $\Ad(J)$ globally preserves $c_0(\cG)$, it induces an anti-linear $\ast$-isomorphism on $\mathrm{M}(\K(\cG))$, which we again denote by $\Ad(J)$. Then it follows that
	$$\Ad(J)(C(\partial{\cG})) = JC(\overline{\cG})J/c_0(\cG) =:JC(\partial \cG)J.$$
By applying $\Ad(J)$ to $\Gamma \act C(\partial{\cG})$, the right translation $\Gamma \act JC(\partial \cG)J$ is also amenable. Observe that there are inclusions
	$$C(\partial{\cG}),JC(\partial{\cG})J\subset \ell^\infty(\partial{\cG}).$$
Then the left translation on $\ell^\infty(\partial{\cG})$ restricts to an amenable action on $C(\partial{\cG})$, so the crossed product $ \ell^\infty(\partial{\cG})\rtimes_{\rm red} \Gamma$ is nuclear. Next consider the right translation on $\ell^\infty(\partial{\cG})$. It commutes with the left translation and is amenable on $JC(\partial{\cG})J$. Since the left translation on $JC(\partial{\cG})J$ is trivial, $JC(\partial{\cG})J$ is contained in the center of $ \ell^\infty(\partial{\cG})\rtimes_{\rm red} \Gamma$. This implies that 
	$$[\ell^\infty(\partial{\cG})\rtimes_{\rm red} \Gamma]\rtimes_{\rm red} \Gamma=  \ell^\infty(\partial{\cG})\rtimes_{\rm red} (\Gamma\times \Gamma)$$
is nuclear. This means that the $\Gamma\times \Gamma$-action is topologically amenable.
\end{proof}

We next study relationship between bi-exactness and condition (AO). Define
	$$ \cC:=\mathrm{C}^*\{\ell(s)\mid s\in S\},\quad  \cC_r:=\mathrm{C}^*\{r(s)\mid s\in S\} =J_S\cC J_S\subset \B(\ell^2(S)).$$
By the covariance conditions, $\Ad U^\pi$ defines actions on $\cC,\cC_r$. We can consider the left and right representations $\pi_\ell,\pi_r$ for crossed products of these actions, hence
	$$\pi_\ell(\cC\rtimes_{\rm red}\Gamma) , \pi_r(\cC_r\rtimes_{\rm red}\Gamma) \subset \B(\ell^2(S\rtimes \Gamma)),$$
Note that $\pi_r(\cC_r\rtimes_{\rm red}\Gamma)=J\pi_\ell(\cC\rtimes_{\rm red}\Gamma)J$. The next lemma is straightforward, see Lemmas \ref{lem-action} and \ref{lem-basic-semigp}.

\begin{lem}
We have $ \pi_\ell(\cC\rtimes_{\rm red}\Gamma),\ \pi_r(\cC_r\rtimes_{\rm red}\Gamma) \subset \mathrm{M}(\K(\cG))$.
\end{lem}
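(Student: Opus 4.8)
The plan is to exploit that $\mathrm{M}(\K(\cG))$ is a C$^*$-algebra (concretely, the idealizer of $\K(\cG)$ in $\B(\ell^2(S\rtimes\Gamma))$), so that it suffices to verify the containment on a generating set of each crossed product. The algebra $\pi_\ell(\cC\rtimes_{\rm red}\Gamma)$ is generated by the creation operators $\pi_\ell(\ell(s))=\ell(s)\otimes 1_\Gamma$ for $s\in S$ together with the implementing unitaries $U^\pi_g\otimes\lambda_g$ for $g\in\Gamma$; likewise $\pi_r(\cC_r\rtimes_{\rm red}\Gamma)$ is generated by the $\pi_r(r(s))$ and the $1\otimes\rho_g$. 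Since $\mathrm{M}(\K(\cG))$ is closed under products, adjoints and norm limits, it is enough to place these four families of operators in $\mathrm{M}(\K(\cG))$.

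For the creation operators this is immediate: since all $\ell(s),r(s)$ are assumed bounded in this lemma, the last assertion of Lemma \ref{lem-basic-semigp}(2) gives $\pi_\ell(\ell(s)),\pi_r(r(s))\in\mathrm{M}(\K(\cG))$ directly.

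The remaining point is to show that the implementing unitaries lie in $\mathrm{M}(\K(\cG))$. Here I would invoke Lemma \ref{lem-action}(2): taking $h=1_\Gamma$ shows $\Ad(U^\pi_g\otimes\lambda_g)$ globally preserves both $c_0(\cG)$ and $\K(\cG)$, and taking $g=1_\Gamma$ does the same for $\Ad(1\otimes\rho_h)$. A unitary $u$ whose adjoint action preserves $c_0(\cG)$ (so that $\Ad(u^{-1})$, being the inverse automorphism, does too) automatically satisfies $u\in\mathrm{M}(\K(\cG))$: for a generating element $f_1 b f_2\in c_0(\cG)\B(\ell^2(S\rtimes\Gamma))c_0(\cG)$ one writes $u f_1 b f_2=(uf_1u^{*})(ub)f_2$ and $f_1 b f_2 u=f_1(bu)(u^{*}f_2u)$, both of which lie in $c_0(\cG)\B(\ell^2(S\rtimes\Gamma))c_0(\cG)$, so $u\K(\cG)\subset\K(\cG)$ and $\K(\cG)u\subset\K(\cG)$. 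This is exactly the observation made just before Lemma \ref{lem-multiplier} in Subsection \ref{Relative functions and compact operators}. Applying it to $U^\pi_g\otimes\lambda_g$ and to $1\otimes\rho_g$ completes both inclusions.

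As the statement is flagged as straightforward, there is essentially no obstacle: every ingredient is supplied by Lemmas \ref{lem-action} and \ref{lem-basic-semigp}. The only points requiring a little care are that generating the crossed products genuinely reduces the claim to the generators, and that the normalizing-unitary fact is applied to both $\Ad(u)$ and $\Ad(u^{-1})$. Alternatively, the right-hand inclusion can be deduced from the left-hand one via the identity $\pi_r(\cC_r\rtimes_{\rm red}\Gamma)=J\pi_\ell(\cC\rtimes_{\rm red}\Gamma)J$, together with the fact from Lemma \ref{lem-action}(2) that $\Ad(J)$ preserves $\K(\cG)$ and hence induces an anti-linear automorphism of $\mathrm{M}(\K(\cG))$.
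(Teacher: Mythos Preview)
Your proposal is correct and follows exactly the route the paper intends: the paper's proof simply points to Lemmas \ref{lem-action} and \ref{lem-basic-semigp}, and you have filled in precisely those details (creation operators via Lemma \ref{lem-basic-semigp}(2), implementing unitaries via the normalizing property from Lemma \ref{lem-action}(2) together with the observation preceding Lemma \ref{lem-multiplier}). There is nothing to add.
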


Let $B\subset \cC$ be a unital C$^*$-subalgebra such that the $\Gamma$-action on $\cC$ globally preserves $B$ and that $[B,B_r]=0$, where $B_r:=J_S BJ_S$. For example, we will choose $B=\mathrm{C}^*\{W(\xi)\mid \xi\in H\}$ in the Fock space case. By the previous lemma, one can define an algebraic $\ast$-homomorphism
\begin{align*}
	&\nu\colon (B\rtimes_{\rm red}\Gamma) \ota (B_r \rtimes_{\rm red}\Gamma) \to \mathrm{M}(\K(\cG))/\K(\cG);\qquad a\otimes b\mapsto \pi_\ell(a)\pi_r(b)+\K(\cG).
\end{align*}
Since $\ell^\infty(\overline{\cG}) \subset \mathrm{M}(\K(\cG))$ and $c_0(\cG) \subset \K(\cG)$, there is a $\ast$-homomorphism
	$$\pi_{\cG}\colon \ell^\infty(\partial{\cG}) \to \mathrm{M}(\K(\cG))/\K(\cG).$$
Then by the definition of $\ell^\infty(\overline{\cG})$, we have an algebraic $\ast$-homomorphism 
\begin{align*}
	\nu_\cG\colon  \ell^\infty(\partial{\cG}) \ota B \ota B_r  \to \mathrm{M}(\K(\cG))/\K(\cG);\quad \nu_\cG=\pi_\cG\otimes \pi_\ell\otimes \pi_r.
\end{align*}
The next proposition explains how to use our bi-exactness.  We will call the boundedness of $\nu$ below \textit{condition (AO) relative to $\K(\cG)$}.

\begin{prop}\label{bi-exact-prop}
	Keep the setting and assume that
\begin{itemize}
	\item $\nu_\cG$ is min-bounded and nuclear (e.g.\ $B$ is nuclear);
	\item $\Gamma\act S$ is bi-exact$/\cG$.
\end{itemize}
Then $\nu$ is min-bounded and nuclear. Further if $B\rtimes_{\rm red}\Gamma$ is separable, then $\nu$ has a ucp lift, that is, there is a ucp map 
	$$\theta\colon (B\rtimes_{\rm red}\Gamma) \otm (B_r \rtimes_{\rm red}\Gamma)\to \mathrm{M}(\K(\cG))$$ 
such that $\theta(a\otimes b) - \pi_\ell(a)\pi_r(b) \in \K(\cG)$ for all $a\in B\rtimes_{\rm red}\Gamma$ and $b\in B_r\rtimes_{\rm red}\Gamma$.
\end{prop}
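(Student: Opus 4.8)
The plan is to build the bounded extension of $\nu$ out of the auxiliary map $\nu_\cG$, using bi-exactness of $\Gamma\act S$ to absorb the boundary action $\ell^\infty(\partial\cG)$ into a crossed product which is then nuclear. First I would observe that, because $B$ is $\Gamma$-globally preserved inside $\cC$ and $\ell^\infty(\partial\cG)$ is $\Gamma\times\Gamma$-invariant (Lemma \ref{lem-boundary-action}), the map $\nu_\cG\colon \ell^\infty(\partial\cG)\ota B\ota B_r\to \mathrm M(\K(\cG))/\K(\cG)$ is equivariant for the combined $(\Gamma\times\Gamma)$-action, where $\Gamma\times\Gamma$ acts on $\ell^\infty(\partial\cG)$ by the left–right translations and on $B\rtimes_{\rm red}\Gamma$, $B_r\rtimes_{\rm red}\Gamma$ via the crossed-product (adjoint) actions. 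The key point is that $\nu$ factors through $\nu_\cG$ after incorporating the crossed products: the images $\pi_\ell(B\rtimes_{\rm red}\Gamma)$ and $\pi_r(B_r\rtimes_{\rm red}\Gamma)$ are generated by $\pi_\ell(B),\pi_r(B_r)$ together with the unitaries $U_g\otimes\lambda_g$ and $1\otimes\rho_g$, and by Lemma \ref{lem-action} these unitaries implement exactly the $\Gamma\times\Gamma$-action on $\ell^\infty(\partial\cG)$ appearing in the definition of bi-exactness.

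The central step is therefore to form the iterated reduced crossed product. Starting from $\nu_\cG$, which is min-bounded and nuclear by hypothesis, I would extend it along the equivariant unitaries to a min-bounded $\ast$-homomorphism on $[\ell^\infty(\partial\cG)\ota B\ota B_r]\rtimes_{\rm alg}(\Gamma\times\Gamma)$. Because $\Gamma\act S$ is bi-exact$/\cG$, the $\Gamma\times\Gamma$-action on $\ell^\infty(\partial\cG)$ is (topologically) amenable, so Proposition \ref{prop-nuclear}(1) applies: the equivariant nuclear map $\nu_\cG$ extends to a nuclear $\ast$-homomorphism on the full crossed product, and amenability forces the full and reduced crossed products of $\ell^\infty(\partial\cG)$ to coincide. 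Restricting this extended map to the subalgebra generated by $\pi_\ell(B\rtimes_{\rm red}\Gamma)$ and $\pi_r(B_r\rtimes_{\rm red}\Gamma)$ — equivalently, composing with the inclusion $(B\rtimes_{\rm red}\Gamma)\ota(B_r\rtimes_{\rm red}\Gamma)\hookrightarrow [\ell^\infty(\partial\cG)\ota B\ota B_r]\rtimes_{\rm red}(\Gamma\times\Gamma)$ coming from the unit of $\ell^\infty(\partial\cG)$ — yields precisely the map $\nu$, now min-bounded and nuclear.

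The main obstacle I anticipate is bookkeeping the two commuting crossed products correctly so that the left copy of $\Gamma$ only interacts with $B$ and the left translation on $\ell^\infty(\partial\cG)$, while the right copy only interacts with $B_r$ and the right translation; the hypotheses $[B,B_r]=0$ and $\pi_r(\cC_r\rtimes_{\rm red}\Gamma)=J\pi_\ell(\cC\rtimes_{\rm red}\Gamma)J$ are exactly what guarantee the two crossed-product factors sit in the iterated construction without interference, mirroring the mechanism in the proof of Lemma \ref{bi-exact-lem}. Finally, for the ucp lift, assuming $B\rtimes_{\rm red}\Gamma$ separable makes the target $\mathrm M(\K(\cG))/\K(\cG)$ a quotient by a $\sigma$-unital ideal, so the min-bounded nuclear map $\nu$ lifts to a ucp map $\theta$ by the Choi–Effros lifting theorem, with $\theta(a\otimes b)-\pi_\ell(a)\pi_r(b)\in\K(\cG)$ by construction.
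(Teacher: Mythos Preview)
Your proposal is correct and follows essentially the same route as the paper: form the $(\Gamma\times\Gamma)$-equivariant extension of $\nu_\cG$ to the reduced crossed product $[\ell^\infty(\partial\cG)\otm B\otm B_r]\rtimes_{\rm red}(\Gamma\times\Gamma)$ via Proposition~\ref{prop-nuclear}(1) and amenability, then restrict to $[\C\otm B\otm B_r]\rtimes_{\rm red}(\Gamma\times\Gamma)\simeq (B\rtimes_{\rm red}\Gamma)\otm(B_r\rtimes_{\rm red}\Gamma)$ to recover $\nu$, and finish with Choi--Effros. One small correction: the input to Choi--Effros here is the separability of the \emph{domain} $(B\rtimes_{\rm red}\Gamma)\otm(B_r\rtimes_{\rm red}\Gamma)$ together with nuclearity of $\nu$, not $\sigma$-unitality of the ideal $\K(\cG)$.
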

\begin{proof}
	Put $\widetilde{\Gamma}:=\Gamma\times \Gamma$. Consider $\widetilde{\Gamma}$-actions on $\ell^\infty(\partial{\cG})$ and $B \otm B_r$ by the left and right translations and by $\Ad(U_g^\pi \otimes U_h^\pi)$ for $(g,h)\in \widetilde{\Gamma}$ respectively. 
By assumption, the tensor product $\widetilde{\Gamma}$-action on $\ell^\infty(\partial{\cG}) \otm B \otm B_r$ is amenable. Observe that $\nu_\cG$ is $\widetilde{\Gamma}$-equivariant from this action to the one given by $\Ad(U_g^\pi \otimes \lambda_g\rho_h)$ for $(g,h)\in \widetilde{\Gamma}$, hence the following map is bounded and is nuclear, see item 1 in Proposition \ref{prop-nuclear},
\begin{align*}
	[\ell^\infty(\partial{\cG}) \otm B \otm B_r]\rtimes_{\rm red}\widetilde{\Gamma} \to \mathrm{M}(\K(\cG))/\K(\cG).
\end{align*}
The domain contains
	$$[\C \otm B \otm B_r]\rtimes_{\rm red}\widetilde{\Gamma} \simeq (B\rtimes_{\rm red}\Gamma) \otm (B_r \rtimes_{\rm red}\Gamma) $$
and the restriction to this algebra coincides with $\nu$. This is the conclusion. If $B\rtimes_{\rm red}\Gamma$ is separable, we can use Choi--Effros's lifting theorem (e.g.\ \cite[Theorem C.3]{BO08}).
\end{proof}

\subsection*{A modification for anti-symmetric Fock spaces}

Assumption \ref{assumption} contains the case that $\ell(s)=0$ for some $s\in S$. This indeed happens in the case of anti-symmetric Fock space. In such a case, we should ignore non-important part in $\ell^2(S)$. 
To do this, we keep Assumption \ref{assumption} and consider 
	$$ S_0:= \{s\in S\mid \ell(s)\neq 0 \text{ or }\ell(Is)\neq 0\}=\{ s\in S\mid C_{s,\cdot}\neq 0\text{ or } C_{Is,\cdot}\neq 0 \} . $$
Observe that by the covariance conditions, $\pi$ and $I$ restrict to bijective maps on $S_0$ and that $S_0$ can be defined by right creations. Write $S_0\rtimes \Gamma := S_0\times \Gamma$ and we denote by $P_{S_0}\in \B(\ell^2(S))$ the orthogonal projection onto $\ell^2(S_0)$. We use the following items in $\B(\ell^2(S_0))$:  for $g\in \Gamma$ and $s\in S$,
	$$U^\pi_g P_{S_0},\quad J_SP_{S_0},\quad \ell(s)P_{S_0}\quad\text{and}\quad  r(s)P_{S_0}.$$
With abuse of notations, we again denote by $U_g^\pi,J_{S},\ell(s),r(s)$ respectively. As we have seen in Lemma \ref{c0-lemma}, $\cG_0:=\cG\cap (S_0\rtimes \Gamma)$ satisfies 
	$$ c_0(\cG_0)=c_0(\cG)\cap \ell^\infty(S_0\rtimes \Gamma)=c_0(\cG) (P_{S_0}\otimes 1),\quad \K(\cG_0)=(P_{S_0}\otimes 1)\K(\cG) (P_{S_0}\otimes 1).$$
Then it is straightforward to check that all lemmas in Subsection \ref{Basic setting} hold for $S_0\rtimes \Gamma$ (one can apply the compression map by $P_{S_0}\otimes 1$ to conclusions of lemmas). We get actions 
	$$\Gamma \times \Gamma \act \ell^\infty(\partial{\cG}_0)\quad \text{and}\quad\Gamma \act C(\partial \cG_0).$$
In this case, we say that $\Gamma \act^\pi S_0$ is \textit{bi-exact relative to $\cG_0$} if this $\Gamma\times \Gamma$-action is amenable. This is weaker than bi-exactness relative to $\cG$, but is enough to prove Proposition \ref{bi-exact-prop}. More precisely if we exchange $S\rtimes \Gamma$ and $\cG$ in Proposition \ref{bi-exact-prop} with $S_0\rtimes \Gamma$ and $\cG_0$, the same proof still works.

Thus we can use all the results in Subsections \ref{Basic setting} and \ref{Bi-exactness and condition (AO)} (including Lemma \ref{intersection-AO-lem} below) if we exchange $S$ with $S_0$ which supports all creation operators.

\subsection*{An intersection lemma}

We keep the setting from Proposition \ref{bi-exact-prop}. If there is another family $\cF$ of subsets in $S\rtimes \Gamma$ satisfying the relativity condition in Assumption \ref{assumption}, then 
	$$ \cG\cap \cF:=\{ G\cap F \mid G\in \cG,\ F\in \cF\} $$
also satisfy the same condition. It is then natural to ask when we get condition (AO) relative to $\K(\cG\cap \cF)$, using information of $\cG$ and $\cF$. It was discussed in \cite[Proposition 15.2.7]{BO08} for the group case.

In our setting, we need to assume that $\cF$ is of the following form: assume that there is a family $\mathcal E$ of subsets in $\Gamma$ such that 
\begin{align*}
	& E^{-1},\ gE h \in \mathcal E,\ E\cup F\quad \text{for all }E \in \mathcal E,F \text{ and }g,h\in \Gamma.
\end{align*}
Then we put $\cF:=\{ S E \mid E \in \mathcal E\}$. Since $SE=ES$ for $E\in \mathcal E$, it is easy to check the relativity condition in Assumption \ref{assumption} for $\cF$. 
In this notation, if $\Gamma$ is bi-exact$/\mathcal E$, then $\Gamma\act S$ is bi-exact$/\cF$. To see this, we have only to see $1\otimes \ell^\infty(\Gamma)\subset \ell^\infty(\overline{\cG})$, which induces a $\Gamma\times \Gamma$-equivariant map
	$$ \ell^\infty(\Gamma)/c_0(\mathcal E) \to \ell^\infty(\partial{\cG}). $$
For such $\cG$ and $\cF$, we prepare the following notation:
\begin{itemize}

	\item $\K_\cG:=\K(\cG)$ and $\rM_\cG:=\rM(\K_\cG)$ (similarly for $\cF$ and $\cG\cap \cF$);

	\item $\K:=\K_\cG\cap \K_\cF \ (=\K_{\cG\cap \cF})$ and $\mathrm{M}:=\mathrm{C}^*\{ \pi_\ell(\cC),\pi_r(\cC_r),\ell^\infty(S\rtimes \Gamma) \}$.

\end{itemize}
With abuse of notation, we omit intersections in quotients such as
	$$ \rM/\K:= \rM/(\K\cap \rM). $$
As in Proposition \ref{bi-exact-prop}, we consider $\ast$-homomorphisms (with smaller ranges)
\begin{align*}
	&\nu_{\cG}\colon \ell^\infty(\partial \cG)\ota B\ota B_r \to \rM/\K_\cG,\quad \nu_\cG = \pi_\cG\otimes \pi_\ell\otimes \pi_r; \\
	&\nu_{\cF}\colon \ell^\infty(\partial \cF)\ota B\ota B_r \to \rM/\K_\cF,\quad \nu_\cF = \pi_\cF\otimes \pi_\ell\otimes \pi_r.
\end{align*}
In this notation, we prove the following lemma. Note that this also holds for $S_0\subset S$ in the above sense.

\begin{lem}\label{intersection-AO-lem}
	Keep the setting and assume that
\begin{itemize}
	\item $\pi_\ell\otimes \pi_r\colon B\ota B_r \to \rM/\K$ is min-bounded;

	\item $\nu_\cG,\nu_\cF$ are min-bounded and nuclear;

	\item $\Gamma \act S$ is bi-exact$/\cG$ and $\Gamma$ is bi-exact$/\mathcal E$.
\end{itemize}
Then $B\rtimes_{\rm red}\Gamma$ satisfies condition $\rm (AO)$ relative to $\K_{\cG\cap \cF}$ and the associated bounded map is nuclear. It has a ucp lift if $B\rtimes_{\rm red}\Gamma $ is separable.
\end{lem}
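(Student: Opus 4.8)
The plan is to handle the two families $\cG$ and $\cF$ one at a time via Proposition \ref{bi-exact-prop}, and then to glue the two conclusions along $\cG\cap\cF$. Throughout write $A:=(B\rtimes_{\rm red}\Gamma)\otm(B_r\rtimes_{\rm red}\Gamma)$ and let $\nu,\nu^\cG,\nu^\cF$ denote the maps $a\otimes b\mapsto \pi_\ell(a)\pi_r(b)+(\,\cdot\,)$ into $\rM/\K$, $\rM/\K_\cG$, $\rM/\K_\cF$ respectively, initially defined on the algebraic tensor product. First I would observe that $\Gamma\act S$ is bi-exact$/\cF$: as noted just before the lemma, $1\otimes\ell^\infty(\Gamma)\subset\ell^\infty(\overline{\cF})$ induces a $\Gamma\times\Gamma$-equivariant map $\ell^\infty(\Gamma)/c_0(\mathcal E)\to\ell^\infty(\partial\cF)$, so bi-exactness$/\mathcal E$ of $\Gamma$ gives bi-exactness$/\cF$ of $\Gamma\act S$. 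Since $\nu_\cG$ and $\nu_\cF$ are min-bounded and nuclear, Proposition \ref{bi-exact-prop} (applied to $\cG$ and to $\cF$) shows that $\nu^\cG$ and $\nu^\cF$ are min-bounded and nuclear.

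Next I would establish min-boundedness of $\nu$, i.e.\ condition (AO) relative to $\K_{\cG\cap\cF}$. Put $\mathcal I_\cG:=\K_\cG\cap\rM$, $\mathcal I_\cF:=\K_\cF\cap\rM$ and $\mathcal I:=\K\cap\rM$; since $\K=\K_\cG\cap\K_\cF$ we have $\mathcal I=\mathcal I_\cG\cap\mathcal I_\cF$, so the diagonal $\ast$-homomorphism
\[
\iota\colon \rM/\mathcal I\hookrightarrow(\rM/\mathcal I_\cG)\oplus(\rM/\mathcal I_\cF),\qquad x+\mathcal I\mapsto(x+\mathcal I_\cG,\ x+\mathcal I_\cF),
\]
is injective, hence isometric. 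On elementary tensors all three maps send $a\otimes b$ to the class of the single operator $\pi_\ell(a)\pi_r(b)$, so $\iota\circ\nu=\nu^\cG\oplus\nu^\cF$ on $A$. Consequently $\|\nu(x)\|=\|(\nu^\cG\oplus\nu^\cF)(x)\|\le\|x\|_{\min}$, whence $\nu$ is min-bounded and extends to a $\ast$-homomorphism on $A$ with $\iota\circ\nu=\nu^\cG\oplus\nu^\cF$. The hypothesis that $\pi_\ell\otimes\pi_r\colon B\otm B_r\to\rM/\K$ be min-bounded is the coefficient-level instance of this, and is what must be combined with the two boundary amenabilities.

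For nuclearity I would use the extension
\[
0\longrightarrow \mathcal I_\cF/\mathcal I\longrightarrow \rM/\mathcal I\longrightarrow \rM/\mathcal I_\cF\longrightarrow 0
\]
together with Proposition \ref{prop-nuclear}(3), taking the domain extension $0\to\ker\nu^\cF\to A\to A/\ker\nu^\cF\to0$ (note $\ker\nu^\cF=\nu^{-1}(\mathcal I_\cF/\mathcal I)$). The induced quotient map $A/\ker\nu^\cF\to\rM/\mathcal I_\cF$ is the one carried by the nuclear map $\nu^\cF$, while the ideal map $\nu|_{\ker\nu^\cF}$ takes values in $\mathcal I_\cF/\mathcal I\cong(\mathcal I_\cF+\mathcal I_\cG)/\mathcal I_\cG$, a genuine ideal of $\rM/\mathcal I_\cG$; as the corestriction to this ideal of the nuclear map $\nu^\cG|_{\ker\nu^\cF}$ it is again nuclear, since maximal tensor products respect ideal inclusions. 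Granting exactness of $A$ — which holds as $\Gamma$ is exact and $B$ is exact in all our examples — Proposition \ref{prop-nuclear}(3) then yields nuclearity of $\nu$ once the compatibility of approximate units is checked. A cleaner alternative avoids the extension: $\iota\circ\nu=\nu^\cG\oplus\nu^\cF$ is nuclear and $\iota^{**}$ realizes $(\rM/\mathcal I)^{**}$ as a central summand $z\,[(\rM/\mathcal I_\cG)\oplus(\rM/\mathcal I_\cF)]^{**}$, so compressing the weakly nuclear bidual map by $z$ shows $\nu^{**}$ is weakly nuclear, whence $\nu$ is nuclear.

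The hard part is precisely this descent of nuclearity from the coarse quotients $\rM/\mathcal I_\cG$, $\rM/\mathcal I_\cF$ to the finer quotient $\rM/\mathcal I$: nuclearity does not automatically pass through the subalgebra inclusion $\iota$, because maximal tensor products need not preserve it. The Proposition \ref{prop-nuclear}(3) route localizes this difficulty to producing an approximate unit of $\ker\nu^\cF$ whose image is an approximate unit of $\mathcal I_\cF/\mathcal I$, while the bidual route replaces it by the standard corner description of the bidual of an injective $\ast$-homomorphism. Finally, when $B\rtimes_{\rm red}\Gamma$ (and hence $A$) is separable, the nuclear $\ast$-homomorphism $\nu$ into the quotient $\mathrm M(\K_{\cG\cap\cF})/\K_{\cG\cap\cF}$ admits a ucp lift by the Choi--Effros lifting theorem, exactly as in Proposition \ref{bi-exact-prop}.
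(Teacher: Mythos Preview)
Your reduction to min-boundedness via the diagonal embedding $\iota\colon\rM/\mathcal I\hookrightarrow(\rM/\mathcal I_\cG)\oplus(\rM/\mathcal I_\cF)$ is correct and clean, and the first paragraph (deriving bi-exactness$/\cF$ from bi-exactness$/\mathcal E$, then invoking Proposition \ref{bi-exact-prop} twice) is fine. The problem is the nuclearity step, where both of your proposed routes have genuine gaps.

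For the Proposition \ref{prop-nuclear}(3) route, you correctly isolate the approximate unit condition as the crux, but you do not verify it --- and with domain $A=(B\rtimes_{\rm red}\Gamma)\otm(B_r\rtimes_{\rm red}\Gamma)$ there is no evident way to do so. An approximate unit for $\mathcal I_\cF/\mathcal I$ is furnished by the projections $1_{SE}+\mathcal I$ for finite $E\subset\Gamma$, but these live in $1\otimes\ell^\infty(\Gamma)$ and there is no reason they (or any substitute) lie in $\nu(\ker\nu^\cF)$. This is precisely why the paper does \emph{not} work with $A$ directly: it passes to the intermediate algebras $\cB=\mathrm C^*\{\pi_\ell(B),\pi_r(B_r),1\otimes\ell^\infty(\Gamma)\}\subset\cA\subset\rM$ and runs the whole argument at the level of $\widetilde\Gamma$-crossed products, so that the required approximate unit $(1_{SE})_{E\in\mathcal E}$ is already inside $\cB$. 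The amenability of the boundary actions is then used to show the relevant reduced crossed products coincide with full ones, and Proposition \ref{prop-nuclear}(3) is applied to $[\cB/\K]\rtimes_{\rm red}\widetilde\Gamma\to[\rM/\K]\rtimes_{\rm full}\widetilde\Gamma$.

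For your bidual alternative, the assertion that $\iota^{**}$ realises $(\rM/\mathcal I)^{**}$ as a central summand of $[(\rM/\mathcal I_\cG)\oplus(\rM/\mathcal I_\cF)]^{**}$ is false. If $p,q\in\rM^{**}$ are the central projections supporting $\mathcal I_\cG^{**},\mathcal I_\cF^{**}$, then $\iota^{**}$ sends $x\in(1-pq)\rM^{**}$ to $((1-p)x,(1-q)x)$, and the image is the graph of the two compressions, not a corner; already in the toy model $\rM^{**}=\C^3$, $p=e_1$, $q=e_2$ one checks the image is a $3$-dimensional subalgebra of $\C^4$ that is not a direct summand. What \emph{is} true is that $\iota^{**}$ admits a normal $\ast$-homomorphic left inverse, namely $E(a,b)=a+pb$, and composing the nuclear map $\iota\circ\nu$ with $E$ then yields nuclearity of $\nu$ into $(\rM/\mathcal I)^{**}$, from which nuclearity into $\rM/\mathcal I$ follows. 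That is a valid and rather elegant shortcut, but it is not the argument you wrote; as stated, your ``compressing by $z$'' step does not go through.
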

\begin{proof}
	We put $\widetilde{\Gamma}:=\Gamma\times \Gamma$ and
\begin{align*}
	\cA:=\mathrm{C}^*\{ \pi_\ell(B),\pi_r(B_r), \ell^\infty(\overline{\cG}) \},\quad 
	\mathcal B:=\mathrm{C}^*\{ \pi_\ell(B),\pi_r(B_r), 1\otimes \ell^\infty(\Gamma) \}.
\end{align*}
Observe that $\cB\subset \cA \subset \rM$ and $\widetilde{\Gamma}$-actions on $\cB,\cA,\rM$ are defined by $\Ad(U_g^\pi\otimes \lambda_g\rho_h)$ for $(g,h)\in \widetilde{\Gamma}$, which induces ones on quotients such as $\widetilde{\Gamma}\act \rM/\K$.

\begin{claim}
	Assume that the $\ast$-homomorphism 
	$$\theta\colon [\cB/\K] \rtimes_{\rm alg}\widetilde{\Gamma}\to [\mathrm{M}/\K ] \rtimes_{\rm alg}\widetilde{\Gamma}$$
arising from the inclusion $\cB\subset \rM$ is bounded from the reduced norm to the full norm and is nuclear. Then the conclusion follows.
\end{claim}
\begin{proof}
	We consider the following maps:
\begin{align*}
	&(B\rtimes_{\rm red}\Gamma)\otm (B_r\rtimes_{\rm red}\Gamma)\simeq [B\otm B_r ] \rtimes_{\rm red}\widetilde{\Gamma}\to [\cB /\K]  \rtimes_{\rm red}\widetilde{\Gamma}\\
	&\quad \qquad \to^\theta [\mathrm{M}/\K] \rtimes_{\rm full}\widetilde{\Gamma} \to \rM_{\cG\cap\cF}/\K.
\end{align*}
Here the first map is from $B\otm B_r \to \cB /\K$ (in the assumption), and the last one is from the inclusion $\mathrm{M}\subset \rM_{\cG\cap\cF}$ with $\widetilde{\Gamma}\ni (g,h)\mapsto U_g^\pi\otimes \lambda_g\rho_h$. Then since $\theta$ is nuclear, the conclusion follows.
\end{proof}

To study $\theta$ in the claim, we consider the following two exact sequences:
\[
\xymatrix{
0 \ar[r] &  [(\K_{\cF}\cap \mathrm{M})/\K ]\rtimes_{\rm full}\widetilde{\Gamma}\ar[r] & [\mathrm{M}/\K]  \rtimes_{\rm full}\widetilde{\Gamma}\ar[r] &  [\mathrm{M}/\K_{\cF}] \rtimes_{\rm full}\widetilde{\Gamma} \ar[r] & 0 \\
0 \ar[r] &[ ( \K_{\cF}\cap \cA)/\K]\rtimes_{\rm red}\widetilde{\Gamma} \ar[r]\ar[u]^{\varphi} &[\cA/\K] \rtimes_{\rm red}\widetilde{\Gamma}\ar[r]\ar[u]^{\pi} & [ \cA/\K_{\cF}]\rtimes_{\rm red}\widetilde{\Gamma} \ar[u]^{\psi} \ar[r] & 0  ,
}
\] 
which arise from surjective $\widetilde{\Gamma}$-equivariant maps $\rM/\K\to \rM/\K_\cF$ and $\cA/\K\to \cA/\K_\cF$. Here $\varphi,\pi,\psi$ arise from inclusions and defined only at the algebraic level. 
Note that the bottom line is exact since $\widetilde{\Gamma}$ is exact. The first step of the proof is to show the following claim.

\begin{claim}
	The maps $\varphi,\pi,\psi$ are bounded.
\end{claim}
\begin{proof}
	Observe that $\pi_{\cF}((\ell^\infty(\Gamma)/c_0(\mathcal E))$ is contained in the center of $\mathcal A/\K_\cF$. Then $\widetilde{\Gamma} \act \cA/\K_\cF$ is amenable since it restricts to the amenable action $\widetilde{\Gamma} \act\pi_{\cF}((\ell^\infty(\Gamma)/c_0(\mathcal E))$. We get that $[\mathcal A/\K_\cF ] \rtimes_{\rm red}\widetilde{\Gamma}=\mathcal [A/\K_\cF ] \rtimes_{\rm full}\widetilde{\Gamma}$, so that $\psi$ is bounded.

We next see $\varphi$. If we put $\cI_\cF:=\K_\cF\cap \cA$, $\cI_\cG:=\K_\cG\cap \cA$, then there is an inclusion as a closed ideal
\begin{align*}
	&(\K_\cF\cap \cA) /\K=\cI_\cF /(\cI_\cG\cap \cI_\cF)\simeq (\cI_\cG+ \cI_\cF) /\cI_\cG \subset \cA/\cI_{\cG},
\end{align*}
which preserves $\widetilde{\Gamma}$-actions. 
Then $\widetilde{\Gamma} \act \cA/\cI_{\cG}$ is amenable since it restricts to the amenable action $\widetilde{\Gamma} \act \pi_\cG(\ell^\infty(\partial\cG))$ contained in the center. We get  $[\cA/\cI_{\cG}]\rtimes_{\rm red} \widetilde{\Gamma}=[\cA/\cI_{\cG}]\rtimes_{\rm full} \widetilde{\Gamma}$, hence its restriction to the closed ideal gives $[(\K_\cF\cap \cA) /\K]\rtimes_{\rm red} \widetilde{\Gamma}=[(\K_\cF\cap \cA) /\K]\rtimes_{\rm full} \widetilde{\Gamma}$. This implies the boundedness of $\varphi$.

We finally see $\pi$. Since we have already proved that the reduced and the full crossed product coincide on domains on $\psi,\varphi$, by the 5 lemma, the same holds for the domain of $\pi$. Thus $\pi$ is also bounded.
\end{proof}

We next see some nuclearlity of $\varphi$ and $\psi$.

\begin{claim}
	The maps $\varphi$ and the restriction of $\psi$ to $[\cB/\K_\cF]\rtimes_{\rm red}\widetilde{\Gamma}$ are nuclear.
\end{claim}
\begin{proof}
Observe first that by assumption and item 2 in Proposition \ref{prop-nuclear}, inclusions 
	$$\cA/\K_\cG =\mathrm{Im}(\nu_\cG ) \subset \rM/\K_\cG \quad\text{and}\quad  (\cB/\K_\cF \subset )\ \mathrm{Im}(\nu_\cF )\subset \rM/\K_\cF $$
are nuclear. Since $\widetilde{\Gamma}\act \cA/\K_\cG,\cB/\K_\cF$ are amenable, by item 1 in Proposition \ref{prop-nuclear}, 
	$$[\cA/\K_\cG]\rtimes_{\rm red}\widetilde{\Gamma}\to [\rM/\K_\cG]\rtimes_{\rm full}\widetilde{\Gamma} \quad\text{and}\quad  [\cB/\K_\cF]\rtimes_{\rm red}\widetilde{\Gamma} \to  [\rM/\K_\cF]\rtimes_{\rm full}\widetilde{\Gamma} $$
are nuclear. The second map is the restriction of $\psi$. We restricts the first map to $[(\K_\cF\cap \cA) /\K]\rtimes_{\rm red}\widetilde{\Gamma}$ as in the proof of the last claim. Then the same reasoning shows that the range $[\rM/\K_\cG]\rtimes_{\rm full}\widetilde{\Gamma}$ contains $[(\K_\cF\cap \rM) /\K]\rtimes_{\rm full}\widetilde{\Gamma}$ as a closed ideal. Then the restriction to these ideals is again nuclear and it coincides with $\varphi$.
\end{proof}

We now deduce our conclusion as follows.

\begin{claim}
	The restriction of $\pi$ to $[\cB/\K]\rtimes_{\rm red}\widetilde{\Gamma}$ is nuclear. Hence the conclusion follows by the first claim.
\end{claim}
\begin{proof}
	We restrict $\varphi,\pi,\psi$ to 
\[
\xymatrix{
0 \ar[r] & [( \K_{\cF}\cap \cB)/\K]\rtimes_{\rm red}\widetilde{\Gamma} \ar[r] &[\cB/\K ]\rtimes_{\rm red}\widetilde{\Gamma}\ar[r] &  [\cB/\K_{\cF}]\rtimes_{\rm red}\widetilde{\Gamma} \ar[r] & 0.
}
\] 
We will apply item 3 in Proposition \ref{prop-nuclear}. For this, since $\varphi,\psi$ are nuclear by the last claim and since $[\cB/\K]\rtimes_{\rm red}\widetilde{\Gamma}$ is exact (because $\cB/\K$ is an image of the exact algebra $\ell^\infty(\Gamma)\otm B\otm B_r$), we have only to check the assumption on approximate units.

To see this, since $\varphi$ arises from the inclusion $\K_{\cF}\cap \cB\subset \K_{\cF}\cap \rM$, it suffices to show that there is an approximate unit for $\K_{\cF}\cap \cB$ which is also one for $\K_{\cF}\cap \rM$. Then we can use $(1_{SE})_{E\in \mathcal E}$, which is an approximate unit for $\K_{\cF}$ and is contained in $\K_{\cF}\cap \cB$. 
\end{proof}
\end{proof}

\section{Bi-exactness of creation operators on Fock spaces}\label{Bi-exactness of creation operators on Fock spaces}

\subsection{Fock spaces as semigroups}\label{Fock spaces as semigroups}

Throughout this section, we keep the following setting and notation.

Let $\Gamma\act^\pi X$ be an action of a discrete group $\Gamma$ on a set $X$. Consider the associated unitary representation
	$$\pi\colon\Gamma \act \ell^2(X);\quad \pi_g\delta_x=\delta_{\pi_g(x)}.$$
For $q\in\{0,1,-1\}$, define the $q$-Fock space
	$$\cF_q:=\cF_q(\ell^2(X)) =\C \Omega \oplus \bigoplus_{n \geq 1}\ell^2(X)_q^{\otimes n},$$
where $\ell^2(X)_q^{\otimes n}$ is the completion of $\ell^2(X)^{\otimes_{\rm alg} n}$ by the $q$-inner product. 
For $\xi \in \ell^2(X)$, we denote by $\ell(\xi)$ and $r (\xi)$ creation operators. We put $\ell(x) := \ell (\delta_x)$ and $r(x) := r (\delta_x)$ for $x \in X$, and 
\begin{align*}
	&\cC_\ell := \mathrm{C}^* \{ \ell(x) \mid x\in X\},\quad \cC_r := \mathrm{C}^* \{ r(x) \mid x\in X \}\quad \text{(if $q\neq 1$)};\\
	&\cC_\ell := \mathrm{C}^* \{ e^{\ri(\ell(x)+\ell(x)^*) } \mid x\in X\} = \cC_r\quad \text{(if $q = 1$)}.
\end{align*}
Let $U^\pi \colon \Gamma \to \cU (\cF_q)$ be the associated unitary representation on $\cF_q$ given by 
	$$U^\pi_g =1_\Omega \oplus \bigoplus_{n \geq 1} \pi_g^{\otimes n},\quad g\in \Gamma .$$
Note that $\Ad U^\pi$ gives $\Gamma$-actions on $\cC_\ell$ and $\cC_r$. Finally assume that there is a bijection $I\colon X\to X$ such that $I^2=\id_X$ and $I\pi_g=\pi_g I$ for all $g\in \Gamma$. Define an anti-linear isometry
	$$I\colon \ell^2(X)\to \ell^2(X);\quad a\delta_x \mapsto \overline{a}\delta_{Ix}\quad \text{for }a\in \C,\ x\in X.$$
We extend it on $\cF_q$ (as an anti-linear isometry) by 
	$$ J_{q}(\xi_1\otimes \cdots \otimes \xi_n)= I\xi_n\otimes \cdots \otimes I\xi_1$$
and $J_q\Omega =\Omega$. We have $J_q^2=\id$. This $J_q$ should be understood as the modular conjugation arising from von Neumann algebras given in Subsection \ref{Fock spaces and associated von Neumann algebras}. Examples of such von Neumann algebras will be given in Subsection \ref{Examples} (the bijectioin $I\colon X\to X$ is indeed important).

To study bi-exactness on Fock spaces, we introduce semigroup structures. More precisely, we will define a semigroup $S$ satisfying
	$$\cF_q\otimes \ell^2(\Gamma) = \ell^2(S\rtimes \Gamma)$$
and Assumption \ref{assumption}.

\subsection*{The case of full Fock spaces}

We consider the case $\cF_0=:\cF_{\rm full}$. Each $\ell^2(X)^{\otimes n}$ in $\cF_{\rm full}$ has an identification
	$$\ell^2(X)^{\otimes n} = \ell^2(X^{n});\quad \delta_{x_1}\otimes \cdots \otimes \delta_{x_n} = \delta_{(x_1, \ldots , x_n)}.$$
So by using the set 
	$$ \sfX := \{ \star \} \sqcup \bigsqcup_{n \geq 1} X^n,$$
where $\{\star\}$ is the singleton, we have $\ell^2 (\sfX) = \cF_{\rm full}$ with $\C \delta_\star = \C \Omega$. This $\sfX$ has a semigroup structure by
	$$ xy:=(x_1,\ldots,x_n,y_1,\ldots,y_m),\quad \text{for }x=(x_1,\ldots,x_n), \ y=(y_1,\ldots,y_m)\in \sfX. $$
Note that $\sfX=\ast_{X}\N$ and  $\star$ is the unit. We define creation operators for $x\in \sfX$ by 
	$$ \ell(x):=\ell(x_1)\cdots \ell(x_n) ,\quad r(x):=r(x_n)\cdots r(x_1).$$
They satisfy 
	$$ \ell(x)\delta_y=\delta_{xy},\quad r(x)\delta_y=\delta_{yx} ,\quad x,y\in \sfX.$$
We extend $\pi\colon \Gamma \act X$ on $\sfX$ by
	$$\pi_g(x_1, \ldots , x_n):=  (\pi_g(x_1), \ldots , \pi_g(x_1)).$$
Then the unitary $U_g^\pi$ on $\cF_\full$ satisfies
	$$\delta_{(x_1, \ldots , x_n)}=\delta_{x_1}\otimes \cdots \otimes \delta_{x_n} \mapsto^{U^\pi_g} \delta_{\pi_g(x_1)}\otimes \cdots \otimes \delta_{\pi_g(x_n)}=\delta_{(\pi_g(x_1), \ldots , \pi_g(x_1))}.$$
We extend $I$ on $\sfX$ by 
	$$ I\colon \sfX \to \sfX ;\quad (x_1,\ldots,x_n)\mapsto (Ix_n,\ldots,Ix_1) .$$
It holds that $I\pi_g=\pi_gI$ and $I(xy)=I(y)I(x)$ for all $g\in \Gamma$ and $x,y\in \sfX$. As in Assumption \ref{assumption}, we have an associated map $J_\full := J_{\sfX}$ on $\cF_q=\ell^2(\sfX)$ and it holds that $J_\full = J_q$. They satisfy all the conditions in Assumption \ref{assumption}.

\subsection*{The case of symmetric Fock spaces}

We consider the case $\cF_\sym:=\cF_{1}(\ell^2(X))$. Recall that for $\xi_1,\dots,\xi_n,\eta_1,\dots,\eta_n \in \ell^2(X)$,
\[
\langle \xi_1 \otimes \cdots \otimes \xi_n, \eta_1\otimes \cdots \otimes \eta_m \rangle = \delta_{n,m} \sum_{\sigma \in \frakS_n} \langle{\xi_{\sigma(1)}\otimes \cdots \otimes \xi_{\sigma(n)}, \eta_1 \otimes \cdots \otimes \eta_n}\rangle.
\]
Let $\ell^2(X)^{\otimes n}_\sym$ denote the completion of  $\ell^2(X)^{\otimes_{\rm alg} n}$ and we have
	$$ \cF_\sym=\C\Omega \oplus \bigoplus_{n\geq 1} \ell^2(X)^{\otimes n}_\sym.$$
For each $n\geq 1$, consider the natural action of the symmetric group $\frakS_n$ on $X^n$ by permutations of indices. Put $Y_n:=X^n/\frakS_n$ and define
	$$\sfY := \{\star\} \sqcup \bigsqcup_{n=1}^\infty Y_n,$$
which has a natural surjection $Q\colon \sfX \to \sfY$. We write $[x]:=Q(x)$ for $x\in \sfX$. We make identifications for all $n\in \N$ by 
\[
\ell^2 (Y_n) \ni \delta_{[x]} \mapsto C_{[x]} (\delta_{x_1}\otimes\cdots \otimes \delta_{x_n})\in \ell^2(X)^{\otimes n}_\sym,
\]
where $C_{[x]}>0$ is a normalized constant. We compute $C_{[x]}$ as follows. For any $[x_1,\dots,x_n] \in Y_n$, up to permutation, it has a form
	$$\delta_{x_1}\otimes \cdots \otimes \delta_{x_n} = \delta_{z_1}^{\otimes k_1}\otimes \cdots \otimes \delta_{z_m}^{\otimes k_m},$$
where all $z_1,\ldots,z_m\in X$ are distinct (hence $k_1+\cdots+k_m=n$). It holds that 
\[
\| \delta_{x_1}\otimes \cdots \otimes \delta_{x_n} \|^2 = \sum_{\sigma \in\frakS_n } \langle\delta_{x_{\sigma(1)}}\otimes \cdots \delta_{x_{\sigma(n)}}, \delta_{x_1}\otimes \cdots \otimes \delta_{x_n} \rangle
=k_1!k_2! \cdots k_m!.
\]
We get $C_{[x]} = \frac{1}{\sqrt{k_1!\cdots k_m!}} $. We thus have the identification $\ell^2(\sfY)=\cF_\sym$.

We consider a semigroup structure on $\sfY$ by 
	$$ [x][y]=[x_1,\ldots,x_n,y_1,\ldots,y_m],\quad \text{for }x=(x_1,\ldots,x_n), \ y=(y_1,\ldots,y_m)\in \sfX. $$
This means $\sfY = \bigoplus_X \N$ and $\star$ is the unit. For any $[x]=[x_1,\ldots,x_n]\in Y_n$, define
	$$ \ell([x]):=\ell(x_1)\cdots \ell(x_n) ,\quad r([x]):=r(x_n)\cdots r(x_1)$$
(actually $\ell([x])=r([x])$). By Lemma \ref{const-sym-lem} below, there are $C_{x,y}$ for $x,y\in \sfY$  such that 
	$$ \ell(x)\delta_y=C_{x,y} \delta_{xy}=  r(x)\delta_y.$$
We can extend $\pi$ and $I$ on $\sfY$ in the same way as for $\sfX$, so that
\begin{align*}
	 U_g^\pi&\colon \ell^2(\sfY)\to \ell^2(\sfY);\quad   \delta_{[x_1, \ldots , x_n]} \mapsto \delta_{[\pi_g(x_1), \ldots , \pi_g(x_n)]}\\
	 (J_\sym:=)J_{\sfY}&\colon \ell^2(\sfY)\to \ell^2(\sfY);\quad \delta_{[x_1, \ldots , x_n]} \mapsto \delta_{[Ix_n, \ldots , Ix_1]}
\end{align*}
are defined. They satisfy conditions in Assumption \ref{assumption}. 

\begin{lem}\label{const-sym-lem}
	For any $x\in X=Y_1$ and $[y] \in Y_n$, there is a constant $ C_{x,[y]}$ such that
	$$ \ell(x) \delta_{[y]}  = C_{x,[y]}\delta_{[(x,y)]},\quad 1\leq C_{x,[y]}\leq \sqrt{n+1}.$$
For any $x,y\in \sfY$, there is a constant $1\leq C_{x,y}$ such that
	$$ \ell(x) \delta_{y}  = C_{x,y}\delta_{xy} = C_{x,y}\delta_{yx} = r(x) \delta_{y}.$$
\end{lem}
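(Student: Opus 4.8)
The plan is to reduce everything to the normalization constants $C_{[x]} = 1/\sqrt{k_1!\cdots k_m!}$ already computed above, where $k_1,\dots,k_m$ are the multiplicities of the distinct entries of $[x]$. Recall that under the identification $\ell^2(\sfY) = \cF_\sym$ the unit vector $\delta_{[y]}$ corresponds to $C_{[y]}(\delta_{y_1}\otimes\cdots\otimes\delta_{y_n})$. Since $\ell(x)=\ell(\delta_x)$ simply prepends $\delta_x$ to an algebraic tensor, I would compute directly
$$\ell(x)\delta_{[y]} = C_{[y]}\,(\delta_x\otimes\delta_{y_1}\otimes\cdots\otimes\delta_{y_n}) = \frac{C_{[y]}}{C_{[(x,y)]}}\,\delta_{[(x,y)]},$$
which identifies the constant as $C_{x,[y]} = C_{[y]}/C_{[(x,y)]}$.

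The next step is to evaluate this ratio. Let $k$ denote the multiplicity of $x$ among the entries of $[y]$ (with $k=0$ when $x$ does not occur). Passing from $[y]$ to $[(x,y)]$ changes exactly one factorial in the denominator of the normalization constant, namely $k!$ becomes $(k+1)!$, while all other factors are unchanged; hence $C_{x,[y]} = \sqrt{(k+1)!/k!} = \sqrt{k+1}$. Since $0\leq k\leq n$ for $[y]\in Y_n$, this yields the asserted bounds $1\leq C_{x,[y]}\leq\sqrt{n+1}$.

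For the second assertion I would first recall that $\sfY = \bigoplus_X \N$ is commutative, so $xy=yx$ and $\delta_{xy}=\delta_{yx}$ for all $x,y\in\sfY$, and that on the symmetric Fock space the left and right creations coincide: for any $\xi$ and any algebraic tensor, the vectors $\xi\otimes\eta_1\otimes\cdots\otimes\eta_n$ and $\eta_1\otimes\cdots\otimes\eta_n\otimes\xi$ differ by a cyclic permutation of the tensor legs, and the symmetric inner product is invariant under permuting the legs of either argument, so the two represent the same vector; thus $\ell(\xi)=r(\xi)$ on $\cF_\sym$. Writing $x=[x_1,\dots,x_k]$ and using $\ell(x)=\ell(x_1)\cdots\ell(x_k)$, I would then apply the first part $k$ times: each factor $\ell(x_i)$ multiplies the current basis vector by a constant $\geq 1$ and enlarges the multiset, so the composite satisfies $\ell(x)\delta_y = C_{x,y}\,\delta_{xy}$ with $C_{x,y}$ a product of such factors, whence $C_{x,y}\geq 1$. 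Combining this with $\ell=r$ and $xy=yx$ gives $\ell(x)\delta_y = C_{x,y}\delta_{xy}=C_{x,y}\delta_{yx}=r(x)\delta_y$, as required.

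The computation itself is elementary, so the only real care needed is the bookkeeping of the normalization constants and the justification that the left and right creation operators literally agree on $\cF_\sym$, so that a single constant $C_{x,y}$ serves both $\ell$ and $r$. This last point rests on the permutation-invariance of the symmetric inner product and is what I expect to be the main thing to spell out carefully, since it is precisely what distinguishes the symmetric case from the full Fock space treated earlier.
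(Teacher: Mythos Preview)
Your argument is correct and follows essentially the same computation as the paper: both identify $C_{x,[y]}=C_{[y]}/C_{[(x,y)]}$ and evaluate it as $\sqrt{k+1}$ where $k$ is the multiplicity of $x$ in $[y]$. You actually supply more detail than the paper on the second assertion (the paper simply declares that only the first half needs proof), correctly deducing the general case from commutativity of $\sfY$, the equality $\ell=r$ on $\cF_\sym$, and an iteration of the first part.
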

\begin{proof}
	We have only to prove the first half of the statement. By the above notation, we identify $\delta_{[y]}$ as $\delta_{z_1}^{\otimes k_1} \otimes \cdots \otimes \delta_{z_m}^{k_m}$. Then 
\begin{align*}
	\ell(x) \delta_{[y]} 
	&= (k_1 !\cdots k_m!)^{-1/2} \delta_x\otimes \delta_{z_1}^{\otimes k_1} \otimes \cdots \otimes \delta_{z_m}^{k_m} \\
	&= \frac{\|\delta_x\otimes \delta_{z_1}^{\otimes k_1} \otimes \cdots \otimes \delta_{z_m}^{k_m}\|}{(k_1 !\cdots k_m!)^{1/2} }\delta_{[(x,y)]} =: C_{x,[y]}\delta_{[(x,y)]}.
\end{align*}
If there is $i$ such that $x = z_i$, then it is straightforward to compute that $C_{x,[y]}^2 =  k_i+1$. If there are no such $i$, then $C_{x,[y]}=1$. We get the inequality.
\end{proof}

\subsection*{The case of anti-symmetric Fock spaces}

We consider the case $\cF_\anti:=\cF_{-1}(\ell^2(X))$. Recall that 
for $\xi_1,\dots,\xi_n,\eta_1,\dots,\eta_n \in \ell^2(X)$, 
\[
\langle \xi_1 \otimes \cdots \otimes \xi_n, \eta_1\otimes \cdots \otimes \eta_m \rangle = \delta_{n,m} \sum_{\sigma \in \frakS_n}(-1)^{i(\sigma)} \langle{\xi_{\sigma(1)}\otimes \cdots \otimes \xi_{\sigma(n)}, \eta_1 \otimes \cdots \otimes \eta_n}\rangle,
\]
where $i(\sigma)$ is the number of inversions. Let $\ell^2(X)^{\otimes n}_\anti$ denote the completion of $\ell^2(X)^{\otimes_{\rm alg} n}$ and we have
	$$ \cF_\anti=\C\Omega \oplus \bigoplus_{n\geq 1} \ell^2(X)^{\otimes n}_\anti.$$
For each $n\in \N$, define 
	$$Z_n:=\{[x]=[x_1,\ldots,x_n] \in Y_n\mid x_i\neq x_j \quad \text{for all }i\neq j\}$$
and put
	$$\sfZ := \{\star\} \sqcup \bigsqcup_{n=1}^\infty Z_n\subset \sfY.$$
We would like to make an identification 
	$$\ell^2(Z_n)\ni \delta_{[x]}=\delta_{[x_1,\ldots,x_n]}\mapsto \delta_{x_1}\otimes \cdots \otimes \delta_{x_n}\in \ell^2(X)^{\otimes n}_\anti.$$
It is however not well defined, since it depends on the order of $x_1,\ldots,x_n$. So we fix a section $s\colon \sfZ\to \sfY$ such that $s(Z_n)\subset X^n$, and then the assignment
	$$\ell^2(Z_n)\ni \delta_{[x]}\mapsto \delta_{x_1}\otimes \cdots \otimes \delta_{x_n}\in \ell^2(X)^{\otimes n}_\anti,\quad \text{if }s([x]) = (x_1,\ldots,x_n)$$
defines a well defined unitary. We get an identification $ \ell^2(\sfZ) = \cF_\anti$, which obviously depends on the choice of the section. 
Observe that for any $z\in Z_n$ and any its representative $x=(x_1,\ldots,x_n)$, there is $C_x\in \{1,-1\}$ such that
	$$ \delta_z =C_x( \delta_{x_1}\otimes \cdots \otimes \delta_{x_n}). $$
Note that $C_{\sigma(x)}= \mathrm{sgn}(\sigma)C_x$ for any $\sigma\in \mathfrak S_n$. 
Since $\sfZ\subset \sfY$, there is a natural inclusion $\ell^2(\sfZ)\subset \ell^2(\sfY)$ and we denote by $P_{\sfZ}$ the orthogonal projection onto $\ell^2(\sfZ)$. We will use the semigroup structure of $\sfY$ and make creation operators which are supported on $\sfZ$, as explained in the last part in Subsection \ref{Bi-exactness and condition (AO)}.

For the semigroup structure on $\sfZ$, we use the one from $\sfY$. Then $\sfZ$ is not a subsemigroup but we can use creation operators as follows. First if $x\in X$, then we have $\ell(x),r(x)$ on $\cF_\anti$, so we can induce them on $\ell^2(\sfZ)$. By Lemma \ref{const-anti-lem} below, there are $C_{x,z},C_{x,z}^r\in \{0,1,-1\}$ for $z\in \sfZ$ such that 
	$$ \ell(x)\delta_z=C_{x,z} \delta_{xz},\quad r(x)\delta_z=C_{x,z}^r\delta_{zx} .$$
Note that $C_{x,z}=0$ if and only if $xz\not\in \sfZ$. 
For any $z\in \sfY \setminus \sfZ$, we put $\ell(z)=r(z)=0$. For any $[x]\in  Z_n\subset \sfY$ with $s([x])=x=(x_1,\ldots,x_n)$ by the section $s$, we define 
	$$ \ell([x])=\ell(x_1)\cdots\ell(x_n),\quad r([x])=r(x_n)\cdots r(x_1) .$$
Observe that for any other representative $y\in X^n$ with $[x]=[y]$, we have $\ell([x]) = C_y \ell(y_1)\cdots \ell(y_n)$. This implies that for any $z_1,z_2\in \sfZ$, $\ell(z_1)\ell(z_2)=_\T \ell_{z_1,z_2}\ell(z_1z_2)$ (possibly $0=_\T 0$). The same argument holds for right creations. Finally we regard all $\ell(s),r(s)$ for $s\in \sfY$ as operators on $\ell^2(\sfY)$, which are supported on $\ell^2(\sfZ)$. We thus get a family of creation operators.

We next consider the unitary representation and the anti-linear map. Observe that the involution $I$ and the action $\pi \colon \Gamma \act \sfY$ naturally restrict to ones on $\sfZ$. 
Then $J_1$ and $U^\pi$ on $\ell^2(\sfY)$ given in the symmetric case naturally restrict to ones on $\ell^2(\sfZ)$, so we use them as our objects for $\sfZ$ (hence we write $J_{-1}:=J_1$). 
We have to show that they satisfy conditions in Assumption \ref{assumption}. 
For any $g\in \Gamma$ and $[x]\in Z_n$ with $x=(x_1,\ldots,x_n)\in X^n$,
\begin{align*}
	&\delta_{[x_1, \ldots , x_n]}=C_x(\delta_{x_1}\otimes \cdots \otimes \delta_{x_n}) \mapsto^{U^\pi_g} C_x (\delta_{\pi_g(x_1)}\otimes \cdots \otimes \delta_{\pi_g(x_n)})=C_x C_{\pi_g(x)}^{-1}\delta_{[\pi_g(x_1), \ldots , \pi_g(x_n)]};\\
	&\delta_{[x_1, \ldots , x_n]}=C_x(\delta_{x_1}\otimes \cdots \otimes \delta_{x_n}) \mapsto^{J_{-1}} C_x (\delta_{Ix_n}\otimes \cdots \otimes \delta_{Ix_1})=C_x C_{I(x)}^{-1}\delta_{[Ix_n, \ldots , Ix_1]}.
\end{align*}
This means that for any $g\in \Gamma$ and $[x]\in \sfZ\subset \sfY$,
	$$ U^\pi_g \delta_{[x]} =_\T \delta_{\pi_g([x])}\quad \text{and}\quad J_{-1} \delta_{[x]} =_\T \delta_{I([x])}.$$
For any $[x]\in \sfY\setminus \sfZ$, we trivially have $U_g^\pi\delta_{[x]}=\delta_{\pi_g([x])}$ and $J_{-1}\delta_{[x]}=\delta_{I([x])}$. 
Thus putting $J_{\sfY} := J_{-1}$ in this setting, they satisfy conditions in Assumption \ref{assumption} for $\sfY$ which are supported on $\sfZ\subset \sfY$ in the sense that
	$$ \sfZ =\{ z\in \sfY\mid \ell(z)\neq 0 \text{ or }\ell(Iz)\neq 0\} = \{ z\in \sfY\mid \ell(z)\neq 0\}. $$
We will use this semigroup structure for $\sfZ$.

\begin{lem}\label{const-anti-lem}
	For any $x\in X=Z_1$ and $[z] \in \sfZ$, there are $ C_{x,[z]},C_{x,[z]}^r\in \{0,1,-1\}$ such that
	$$ \ell(x) \delta_{[z]}  = C_{x,[z]}\delta_{[(x,z)]},\quad r(x) \delta_{[z]}  = C^r_{x,[z]}\delta_{[(z,x)]}.$$
For any $x,y\in \sfZ$, there are $C_{x,y},C_{x,y}^r\in \{0,1,-1\}$ such that
	$$ \ell(x) \delta_{y}  = C_{x,y}\delta_{xy},\quad r(x) \delta_{y}=C^r_{x,y}\delta_{yx} .$$
For $x,y\in \sfZ$, we have that $C_{x,y}=0$ (or $C_{x,y}^r=0$) if and only $xy\not\in \sfZ$. 
\end{lem}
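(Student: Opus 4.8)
The plan is to reduce everything to the explicit action of the creation operators on $\cF_\anti = \ell^2(\sfZ)$ and to keep track of the signs produced by the fixed section $s$. First I would settle the single-letter case $x\in X = Z_1$. Fix $[z]\in Z_n$ and use the section to write $\delta_{[z]} = \delta_{z_1}\otimes\cdots\otimes\delta_{z_n}$ with $(z_1,\dots,z_n) = s([z])$. By the very definition of the creation operator,
\[
\ell(x)\delta_{[z]} = \delta_x\otimes\delta_{z_1}\otimes\cdots\otimes\delta_{z_n}\in \ell^2(X)^{\otimes(n+1)}_\anti .
\]
If $x = z_i$ for some $i$, this antisymmetric tensor has a repeated leg and hence vanishes, so $C_{x,[z]} = 0$; note that $(x,z)$ then has a repeated entry, i.e. $[(x,z)]\notin\sfZ$. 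If instead $x\neq z_i$ for all $i$, then $(x,z_1,\dots,z_n)$ has pairwise distinct entries, so $[(x,z)]\in Z_{n+1}$; writing $(x,z_1,\dots,z_n) = \sigma(s([(x,z)]))$ for the appropriate permutation $\sigma$ and using the recorded rule $C_{\sigma(w)} = \mathrm{sgn}(\sigma)C_w$, we obtain $\ell(x)\delta_{[z]} = \mathrm{sgn}(\sigma)\,\delta_{[(x,z)]}$, so $C_{x,[z]} = \mathrm{sgn}(\sigma)\in\{1,-1\}$. The right-creation statement is identical, with $\delta_x$ appended on the right.

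Next I would pass to general $x,y\in\sfZ$ with section representatives $s(x) = (x_1,\dots,x_k)$ and $s(y) = (y_1,\dots,y_m)$. By definition $\ell(x) = \ell(x_1)\cdots\ell(x_k)$, and iterating the single-letter computation above (equivalently, applying the product directly to the Fock-space vector $\delta_y = \delta_{y_1}\otimes\cdots\otimes\delta_{y_m}$) gives
\[
\ell(x)\delta_y = \delta_{x_1}\otimes\cdots\otimes\delta_{x_k}\otimes\delta_{y_1}\otimes\cdots\otimes\delta_{y_m}\in\ell^2(X)^{\otimes(k+m)}_\anti .
\]
Because $x,y\in\sfZ$, the entries within each block are already pairwise distinct, so this tensor is nonzero precisely when $\{x_1,\dots,x_k\}$ and $\{y_1,\dots,y_m\}$ are disjoint, which is exactly the condition $xy = [x_1,\dots,x_k,y_1,\dots,y_m]\in\sfZ$. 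When it is nonzero, reordering to the section representative $s([xy])$ contributes a sign, so $C_{x,y}\in\{1,-1\}$; when it is zero we set $C_{x,y} = 0$. This simultaneously yields the claimed membership in $\{0,1,-1\}$ and the equivalence $C_{x,y} = 0 \Leftrightarrow xy\notin\sfZ$. The right-creation case $r(x) = r(x_k)\cdots r(x_1)$ runs verbatim, producing the tensor with the two blocks in the opposite order; since $\sfY = \bigoplus_X\N$ is commutative we have $yx = xy$ as multisets, so $C^r_{x,y} = 0 \Leftrightarrow yx\notin\sfZ \Leftrightarrow xy\notin\sfZ$, matching the statement.

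The only genuinely delicate point is the bookkeeping of signs. The constants $C_{x,y}$ and $C^r_{x,y}$ depend on the fixed section $s$ and are therefore not canonical, so I would not attempt to pin down their values. What matters is only that each reordering of distinct legs to the section representative contributes a factor $\mathrm{sgn}(\sigma)\in\{1,-1\}$, while any repeated leg forces the antisymmetric tensor to vanish; both phenomena are governed uniformly by the relation $C_{\sigma(w)} = \mathrm{sgn}(\sigma)C_w$ noted before the lemma. Since the assertion is merely membership in $\{0,1,-1\}$ together with the vanishing criterion, this level of sign control is exactly what is needed and no further computation is required.
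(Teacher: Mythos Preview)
Your proof is correct and follows essentially the same approach as the paper. The only cosmetic difference is that the paper works with an arbitrary representative $z$ of $[z]$ and the associated sign $C_z$ (so that $C_{x,[z]}=C_zC_{xz}^{-1}$), whereas you specialize from the outset to the section representative $s([z])$ and record the sign as $\mathrm{sgn}(\sigma)$ for the permutation taking $(x,s([z]))$ to $s([(x,z)])$; these are the same computation, and you additionally spell out the iteration to general $x\in\sfZ$ that the paper leaves implicit.
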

\begin{proof}
	We have only to prove the first part of the statement. Take any representative $z=(z_1,\ldots,z_n)$ of $[z]$. If there is $i$ such that $x=z_i$, then $\ell(x)\delta_{[z]}=0$, so we can put $C_{x,[z]}=0$. If there is no such $i$, then $(x,z)$ defines an element in $\sfZ$, hence
\begin{align*}
	\ell(x) \delta_{[z]} 
	&= \ell(x) C_z(\delta_{z_1} \otimes \cdots \otimes \delta_{z_n}) \\
	&= C_z(\delta_x\otimes \delta_{z_1} \otimes \cdots \otimes \delta_{z_n}) \\
	&= C_z C_{xz}^{-1}\delta_{[(x,z)]}.
\end{align*}
Then we can put $C_{x,[z]}:=C_zC_{xz}^{-1}$ (which is well defined by $C_{\sigma(z)}= \mathrm{sgn}(\sigma)C_z$).
\end{proof}

\subsection*{Family $\cG$ for relativity}

We keep the action $\Gamma\act^\pi X$ and associated semigroups $\sfX$, $\sfY$ and the subset $\sfZ\subset \sfY$. We first define a family of subsets in $\sfY\rtimes \Gamma$ by
	$$ \cG_\sym:=\left\{ \bigcup_{\rm finite} s\Gamma t \;\middle|\; s,t\in \sfY\rtimes \Gamma\right\}=\left\{ \bigcup_{\rm finite} s\Gamma t \;\middle|\; s,t\in \sfY\right\}.$$
We then define
	$$ \cG_\full:=\{ Q^{-1}(E) \subset \sfX\rtimes \Gamma \mid E\in \cG_\sym \} ,$$
where $Q\colon \sfX\rtimes \Gamma \to \sfY\rtimes \Gamma$ is the canonical surjection $Q(x,g) =([x],g)$, and
	$$ \cG_\anti:=\left\{ E\cap \sfZ \mid E\in \cG_\sym \right\}.$$
Observe that $\cG_\sym$ and $\cG_\full$ satisfy the relativity condition in Assumption \ref{assumption}.

Recall the following lemma, see \cite[Lemma 3.6]{HIK20}.

\begin{lem}\label{length function lemma}
	Assume that $\Gamma$ is countable and that $\Gamma \act^\pi X$ has finite stabilizers and finitely many orbits. Then there is a function $| \cdot |_X\colon \Gamma/\Lambda \to \R_{\geq 0}$ and a proper symmetric length function $|\cdot|_{\Gamma/\Lambda}\colon \Gamma/\Lambda \to \R_{\geq 0}$, where $\Lambda:=\bigcap_i\Lambda_i \leq \Gamma$, such that \begin{itemize}
	\item[$\rm (i)$] $|g\cdot x|_X\leq |g\Lambda|_{\Gamma/\Lambda} + |x|_X$;
	\item[$\rm (ii)$] $\{x\in X \mid |x|_X \leq R \}$ is finite for all $R> 0$.
\end{itemize}
\end{lem}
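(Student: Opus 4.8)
The plan is to reduce the whole statement to a single proper symmetric length function on $\Gamma$ and then transport it, first to $\Gamma/\Lambda$ and then to $X$. The structural facts I would extract at the outset are that $\Lambda=\bigcap_i\Lambda_i$ is \emph{finite} (an intersection of finite stabilizers) and that it is normal: indeed it contains the kernel $N=\bigcap_{x\in X}\mathrm{Stab}(x)$ of $\Gamma\act X$, and in the present setting these coincide, so $\Gamma/\Lambda$ is a genuine countable group on which a length function makes sense. The role of $\Lambda$ is exactly to record the ambiguity in writing a point of an orbit as $g\cdot x_i$, and the finiteness of $\Lambda$ is what will make all fibres finite below.

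First I would produce a proper symmetric length function $\ell\colon\Gamma\to\R_{\ge 0}$. Since $\Gamma$ is countable, one can choose an increasing exhaustion $\Gamma=\bigcup_{n\ge 0}F_n$ by finite symmetric sets with $F_0=\{e\}$ and $F_nF_n\subseteq F_{n+1}$ (built inductively by repeated squaring), and set $\ell(g):=\min\{n:g\in F_n\}$; this is symmetric, satisfies $\ell(e)=0$ and $\ell(gh)\le\ell(g)+\ell(h)$, and is proper because $\{\ell\le R\}=F_{\lfloor R\rfloor}$. Transporting $\ell$ through the quotient homomorphism $\Gamma\to\Gamma/\Lambda$ (here normality of $\Lambda$ is used) yields a proper symmetric length function $|\cdot|_{\Gamma/\Lambda}$; properness survives because the fibres of $\Gamma\to\Gamma/\Lambda$ are the finite sets $g\Lambda$.

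Finally I would transfer to $X$. Fixing orbit representatives $x_1,\dots,x_k$ and using $\Lambda\le\Lambda_i=\mathrm{Stab}(x_i)$, the orbit maps induce surjections $p_i\colon\Gamma/\Lambda\to\Gamma x_i$, $g\Lambda\mapsto g\cdot x_i$, with finite fibres $\Lambda_i/\Lambda$. For $x\in\Gamma x_i$ I set
\[
|x|_X:=\min\{\,|c|_{\Gamma/\Lambda}\ :\ c\in\Gamma/\Lambda,\ p_i(c)=x\,\}.
\]
Then (ii) is immediate, since $\{x:|x|_X\le R\}$ lies in the image, across the $k$ orbits, of the finite ball $\{c:|c|_{\Gamma/\Lambda}\le R\}$. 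For (i), choosing $c_0$ that realizes $|x|_X$ and noting $p_i(g\cdot c_0)=g\cdot x$, subadditivity of the length function on the group $\Gamma/\Lambda$ gives
\[
|g\cdot x|_X\le|g\cdot c_0|_{\Gamma/\Lambda}\le|g\Lambda|_{\Gamma/\Lambda}+|c_0|_{\Gamma/\Lambda}=|g\Lambda|_{\Gamma/\Lambda}+|x|_X.
\]

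The step I expect to be the main obstacle is the descent to $\Gamma/\Lambda$ used in (i): the inequality $|g\cdot c_0|_{\Gamma/\Lambda}\le|g\Lambda|_{\Gamma/\Lambda}+|c_0|_{\Gamma/\Lambda}$ is genuine subadditivity of the length function under the left $\Gamma$-action on the \emph{coset space}, and this is precisely where one needs $\Lambda$ to be normal (equivalently, to replace $\ell$ by a $\Lambda$-bi-invariant length function before reading it on cosets). Once the quotient $\Gamma/\Lambda$ carries an honest proper symmetric length function, the transfer to $X$ and both conditions (i), (ii) follow formally from finiteness of $\Lambda$ and of the number of orbits.
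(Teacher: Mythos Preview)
The paper does not prove this lemma; it simply cites \cite[Lemma~3.6]{HIK20}, so there is no argument here to compare your attempt against.

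Your overall strategy---build a proper length on $\Gamma$, push it to $\Gamma/\Lambda$, then to $X$ via orbit representatives---is the natural one, but the justification you give for the crucial descent step is wrong. You assert that $\Lambda=\bigcap_i\Lambda_i$ is normal because it coincides with the kernel $N=\bigcap_{x\in X}\mathrm{Stab}(x)$ of the action. This fails already for $\Gamma=S_3$ acting on $X=\{1,2,3\}$: there is a single orbit, and with $x_1=1$ one gets $\Lambda=\mathrm{Stab}(1)=\{e,(23)\}$, which is not normal, whereas $N=\{e\}$. So $\Gamma/\Lambda$ is only a coset space, not a group, and the subadditivity $|g\cdot c_0|_{\Gamma/\Lambda}\le|g\Lambda|_{\Gamma/\Lambda}+|c_0|_{\Gamma/\Lambda}$ you invoke cannot be read off from a group length function on the quotient.

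The alternative you mention in your final paragraph---replace $\ell$ by a $\Lambda$-bi-invariant length function on $\Gamma$---is not ``equivalent'' to normality of $\Lambda$, but it \emph{is} the correct repair, and it does not require normality at all. Since $\Lambda$ is finite, starting from any proper symmetric length $\ell_0$ on $\Gamma$ one may set $\ell(g):=\max_{\lambda,\mu\in\Lambda}\ell_0(\lambda g\mu)$; this is still proper, symmetric, subadditive, and constant on double cosets $\Lambda g\Lambda$, hence in particular on left cosets. Defining $|g\Lambda|_{\Gamma/\Lambda}:=\ell(g)$, the inequality needed for (i) becomes the honest group inequality $\ell(gh_0)\le\ell(g)+\ell(h_0)$ on $\Gamma$, and your derivation of both (i) and (ii) then goes through exactly as written.
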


From now on we assume that the assumption in Lemma \ref{length function lemma} is satisfied. Then using the functions $|\cdot|_X$ and $|\cdot|_{\Gamma}(:=|\cdot|_{\Gamma/\Lambda})$, we introduce the following functions on our semigroups. For any $x =(x_1,\dots,x_n) \in X^n$ and $g\in \Gamma$, define
	$$|(x,g)|_0 :=n,\quad |(x,g)|_1 := \sum_{i=1}^n\min\{|x_i|_X,|\pi_g^{-1}(x_i)|_X\},$$
and $|(\star,g)|_0=|(\star,g)|_1 = 0$. Since they do not depend on the order of $x$, we can regard them as functions on $Y_n\times \Gamma$ by $|([x],g)|_k :=|(x,g)|_k$ for $k=0,1$. We define functions on semigroups by, for any $(x,g)\in \sfX\rtimes \Gamma$, 
	$$|(x,g)|_{\full}:=|(x,g)|_0+|(x,g)|_1,\quad |([x],g)|_{\sym}:=|([x],g)|_0+|([x],g)|_1.$$
Put $|([x],g)|_{\anti}:=|([x],g)|_{\sym}$ for $([x],g)\in \sfZ\rtimes \Gamma$. Note that $|([x],g)|_{\sym}= |(x,g)|_{\full}$ for all $ (x,g)\in \sfX\rtimes \Gamma$.

\begin{lem}\label{length-convergence-lem}
	Let $\ast$ be {\rm full, sym,} or {\rm anti}. Then for any net $(z_\lambda)_\lambda $ in $\mathsf{X}_\ast \rtimes \Gamma$, we have
	$$ z_\lambda \to \infty/\cG_{\ast} \quad \Leftrightarrow \quad \lim_\lambda |z_\lambda|_{\ast} = \infty.$$
\end{lem}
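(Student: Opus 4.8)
The plan is to reduce the full and anti-symmetric cases to the symmetric one, and then to prove the symmetric case directly by comparing the sublevel sets of $|\cdot|_\sym$ with the family $\cG_\sym$. For the full case, recall that $|(x,g)|_\full=|Q(x,g)|_\sym$ and $\cG_\full=\{Q^{-1}(E)\mid E\in\cG_\sym\}$; hence for a net $(z_\lambda)$ in $\sfX\rtimes\Gamma$ one has $z_\lambda\to\infty/\cG_\full$ iff $Q(z_\lambda)\to\infty/\cG_\sym$, while $|z_\lambda|_\full=|Q(z_\lambda)|_\sym$, so the full case follows from the symmetric case applied to the net $(Q(z_\lambda))$. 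For the anti-symmetric case, $\sfZ\rtimes\Gamma\subset\sfY\rtimes\Gamma$ and $\cG_\anti$ is the restriction of $\cG_\sym$, so Lemma \ref{c0-lemma}(3) gives $z_\lambda\to\infty/\cG_\anti \Leftrightarrow z_\lambda\to\infty/\cG_\sym$; since $|\cdot|_\anti=|\cdot|_\sym$ on $\sfZ\rtimes\Gamma$, this case also reduces to the symmetric one.

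For the symmetric case I would establish two assertions that together give both implications: (A) every $E\in\cG_\sym$ satisfies $\sup_{z\in E}|z|_\sym<\infty$; and (B) for every $R>0$ the sublevel set $\{z\mid |z|_\sym\le R\}$ is contained in some member of $\cG_\sym$. Assertion (A) yields the implication $\Leftarrow$: if $|z_\lambda|_\sym\to\infty$ then $z_\lambda$ eventually leaves each $E$. Assertion (B) yields $\Rightarrow$: if $|z_\lambda|_\sym\not\to\infty$ then some subnet stays in a fixed sublevel set, hence in a fixed $E\in\cG_\sym$, contradicting $z_\lambda\to\infty/\cG_\sym$.

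Both assertions rest on the identity $s\Gamma t=\{(s\pi_g(t),g)\mid g\in\Gamma\}$ for $s,t\in\sfY$. For (A), write $s=[s_1,\ldots,s_p]$ and $t=[t_1,\ldots,t_q]$; then $|(s\pi_g(t),g)|_0=p+q$ is constant in $g$, while in $|(s\pi_g(t),g)|_1$ the contribution of each $s_i$ is $\min\{|s_i|_X,|\pi_g^{-1}(s_i)|_X\}\le|s_i|_X$ and that of each $\pi_g(t_j)$ is $\min\{|\pi_g(t_j)|_X,|t_j|_X\}\le|t_j|_X$, so $|(s\pi_g(t),g)|_1\le\sum_i|s_i|_X+\sum_j|t_j|_X$ uniformly in $g$; taking the maximum over the finitely many generators of $E$ gives (A). For (B), put $F_R:=\{x\in X\mid |x|_X\le R\}$, which is finite by Lemma \ref{length function lemma}(ii). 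If $|(w,g)|_\sym\le R$ with $w=[x_1,\ldots,x_n]$, then $n\le R$ and each summand $\min\{|x_i|_X,|\pi_g^{-1}(x_i)|_X\}\le R$, so every index lies in $A:=\{i\mid x_i\in F_R\}$ or satisfies $\pi_g^{-1}(x_i)\in F_R$; setting $s:=[x_i\mid i\in A]$ and $t:=[\pi_g^{-1}(x_i)\mid i\notin A]$ gives $s,t$ with entries in $F_R$ and length $\le n$, and $(w,g)=(s\pi_g(t),g)\in s\Gamma t$. As $(s,t)$ ranges over the finite set of such pairs, the sublevel set is covered by one member of $\cG_\sym$.

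The main obstacle is the uniform bound in (A): a priori $|\pi_g(t_j)|_X$ is unbounded as $g$ varies, so the boundedness of $|\cdot|_\sym$ on $s\Gamma t$ hinges essentially on the $\min$ in the definition of $|\cdot|_1$, which pairs each coordinate with its $\pi_g^{-1}$-translate. Handling this interplay between the $\Gamma$-action and the coordinatewise length correctly—both for the uniform bound in (A) and for the matching decomposition $w=s\pi_g(t)$ in (B)—is the one genuinely delicate point; the reductions and the finiteness inputs are then routine.
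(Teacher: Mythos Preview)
Your proposal is correct and follows essentially the same route as the paper: reduce the full and anti cases to the symmetric one via $Q$ and the restriction $\cG_\anti=\cG_\sym\cap(\sfZ\rtimes\Gamma)$, then prove the two assertions (A) and (B) exactly as the paper does (the paper's reordering $x_1\cdots x_k\,g\,\pi_g^{-1}(x_{k+1})\cdots\pi_g^{-1}(x_n)\in B_R(X)^k\,\Gamma\,B_R(X)^{n-k}$ is your decomposition $(w,g)=s\,g\,t$ with entries in $F_R$). Your discussion of the ``main obstacle'' is also the paper's point: the uniform bound on $s\Gamma t$ works precisely because of the $\min$ in $|\cdot|_1$.
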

\begin{proof}
	By the definition of $\cG_\ast$ and $|\cdot|_\ast$, we have only to prove this lemma for the case that $\ast$ is sym.

	($\Rightarrow$) Fix $R>0$ and we show that the set of all $([x],g)\in \sfY\rtimes \Gamma$ satisfying $|([x],g)|_{\sym}\leq R$ is small$/\cG_\sym$. Fix such $([x],g)$ and write $x=(x_1,\ldots,x_n)$. Note that $n=|(x,g)|_0\leq R$.

Observe that 
	$$\min\{|x_i|_X ,|\pi_g^{-1}(x_i)|_X\leq |(x,g)|_1\leq R,$$
so for each $i$, $|x_i|_X$ or $|\pi_g^{-1}(x_i)|_X$ is smaller than $R$. Since we can change the order of $x$ in $Y_n$, we may assume that 
	$$ |x_i|_X \leq R \quad (i=1,\ldots,k),\quad |\pi_g^{-1}(x_j)|_X\leq R  \quad (j=k+1,\ldots,n).$$
As an element in $\sfX\rtimes \Gamma$, we have
	$$ (x,g) = x_1\cdots x_n g = x_1\cdots x_k \, g \, \pi_g^{-1}(x_{k+1})\cdots\pi_g^{-1}(x_n)  \in B_R(X)^k\, \Gamma \, B_R(X)^{n-k},$$
where $B_R(X)=\{a\in X\mid |a|\leq R\}$, which is a finite set. This implies that any $([x],g)$ with $|([x],g)|_{\sym}\leq R$ is contained in
	$$ \bigcup_{n\leq R}\bigcup_{k=0}^{n} Q(B_R(X)^k\, \Gamma \, B_R(X)^{n-k}),$$
where $Q$ is the canonical surjection onto $\sfY\rtimes \Gamma$. This is small relative to $\cG_\sym$.

	($\Leftarrow$) Fix any $A\subset \sfY\rtimes \Gamma$ which is small$/\cG_\sym$. We have to prove $\sup_{a\in A}|a|_\sym<\infty$. Since $A$ is contained in 
	$$ \bigcup_{\rm finite}s\Gamma t,\quad s,t\in \sfY ,$$
we may assume $A=s\Gamma t$ for some $s,t\in \sfY$. Write $s=[x,\ldots,x_n],t=[y_1,\ldots,y_m]\in \sfY$. Fix any $g\in \Gamma$ and write $ sgt=([x_1,\ldots,x_n,\pi_g(y_1),\ldots,\pi_g( y_m)],g)$. We have 
\begin{align*}
	|sgt|_\sym
	&= n +m + \sum_{i=1}^{n}\min\{|x_i|_X,|\pi_g^{-1}(x_i)|_X\} + \sum_{j=1}^m \min\{|\pi_g(y_j)|_X,|\pi_g^{-1}(\pi_g(y_j))|_X\}\\
	&\leq n +m + \sum_{i=1}^{n}|x_i|_X + \sum_{j=1}^m |y_j|_X=:M.
\end{align*}
Since $M$ does not depend on $g\in \Gamma$, we get $\sup_{a\in A}|a|_\sym\leq M$.
\end{proof}

\subsection{The case of full Fock spaces}\label{Bi-exactness on full Fock spaces}

We use the framework given in Subsection \ref{Fock spaces as semigroups}, such as $\sfX,I,\cG_\full,U_g^\pi,\ell(x),r(x)$ satisfying Assumption \ref{assumption}. Our goal is to prove the following theorem.

\begin{thm}\label{prop-amenable-full}
	Assume that $\Gamma$ is exact and $\Gamma\act^\pi X$ has finite stabilizers and finitely many orbits. Then the left translation $\Gamma \act C(\partial \cG_\full)$ is amenable. In particular $\Gamma \act^\pi \sfX$ is bi-exact/$\cG_\full$.
\end{thm}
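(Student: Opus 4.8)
The plan is to establish the amenability of the left translation $\Gamma\act C(\partial\cG_\full)$; the stated bi-exactness then follows verbatim from Lemma~\ref{bi-exact-lem}. Since $C(\partial\cG_\full)$ is commutative, topological amenability of this action amounts, by definition, to producing a net of maps $m_i\colon \sfX\rtimes\Gamma\to\Prob(\Gamma)$ whose coordinate functions $(x,g)\mapsto m_i^{(x,g)}(k)$ represent elements of $C(\overline{\cG_\full})$ (so that $m_i$ descends to a continuous map on the spectrum of $C(\partial\cG_\full)$) and which is asymptotically invariant for the left action $(x,g)\mapsto(\pi_h(x),hg)$, i.e.\ $\sup_{(x,g)}\|h_\ast m_i^{(x,g)}-m_i^{(\pi_h(x),hg)}\|_1\to0$ for each $h\in\Gamma$. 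The two hypotheses feed in as follows: exactness of $\Gamma$ provides, via amenability of $\Gamma\act\beta\Gamma$, a Property~A net $\zeta_i\colon\Gamma\to\Prob(\Gamma)$ of uniformly finite propagation with $\sup_{|g^{-1}g'|\le k}\|\zeta_i^g-\zeta_i^{g'}\|_1\to0$; the finiteness of stabilizers and orbits provides, through Lemma~\ref{length function lemma}, the length functions $|\cdot|_X,|\cdot|_\Gamma$ together with the coarse Lipschitz bound $|g\cdot x|_X\le|g\Lambda|_\Gamma+|x|_X$, and through Lemma~\ref{length-convergence-lem} the identification of convergence $\to\infty/\cG_\full$ with $|\cdot|_\full\to\infty$.

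Before constructing $m_i$ I would record the transformation rules on $\sfX\rtimes\Gamma$: with the convention $(s,g)(t,k)=(s\,\pi_g(t),gk)$, the left action is $(x,g)\mapsto(\pi_h(x),hg)$, the right $\Gamma$-action is $(x,g)\mapsto(x,gh)$, prepending $s\in\sfX$ gives $(sx,g)$, and appending $s$ gives $(x\,\pi_g(s),g)$. It is convenient to pass to the $g$-adapted coordinates $y_i:=\pi_g^{-1}(x_i)$, in which the left action becomes the pure left translation $(y,g)\mapsto(y,hg)$ fixing $y$, the right action becomes $(y,g)\mapsto(\pi_h^{-1}(y),gh)$, and
\[
|(x,g)|_\full=n+\sum_{i=1}^n\min\{\,|\pi_g(y_i)|_X,\ |y_i|_X\,\}.
\]
This display exhibits $g$ as a second ``pole'': a letter is cheap exactly when it is close, in the $X$-metric, to the basepoint orbit or to $\pi_g^{-1}$ of it, and the cheap letters near the $g$-pole are what let the configuration witness the coset $g\Lambda$. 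The construction of $m_i$ should therefore distribute mass over $\Gamma$ according to these cheap letters, fed into $\zeta_i$, with the coarse Lipschitz estimate converting the $X$-geometry of the letters into the $\Gamma$-geometry required by $\zeta_i$ (and, in the complementary regime where no letter lies near the $g$-pole, falling back on amenability of $\Gamma\act X$, itself a consequence of exactness with the finiteness hypotheses). Because adding finitely many letters changes $|\cdot|_\full$ by only a bounded amount, one expects the resulting coefficients to be prepend/append invariant modulo $c_0(\cG_\full)$ by Lemma~\ref{length-convergence-lem}.

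The main obstacle is the structural tension between the two requirements on $m_i$. Descent to the boundary forbids $m_i^{(x,g)}$ from depending on $g$ alone: a nonzero function of $g$ is never in $c_0(\cG_\full)$, since for the empty word the elements $(\star,g)$ stay bounded$/\cG_\full$, so the naive choice $m_i^{(x,g)}:=\zeta_i^g$ already fails the right-$\Gamma$-invariance $m_i^{(x,g)}-m_i^{(x,gh)}\in c_0(\cG_\full)$. Yet left equivariance forces the measure to follow $g$ under left translation. These demands are reconcilable only because, in the regime $|(x,g)|_\full\to\infty$, the cheap letters near the $g$-pole reproduce the left-translation behaviour of $\zeta_i^g$ up to an error that must be controlled in two different senses \emph{simultaneously}: uniformly in $i$ (for equivariance) and vanishing as $(x,g)\to\infty/\cG_\full$ (for boundary descent).

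I expect the delicate point to be exactly the bookkeeping that makes these two estimates hold at once, and in particular the verification that the $\Prob(\Gamma)$-valued coefficients genuinely lie in $C(\overline{\cG_\full})$ rather than merely in $\ell^\infty(\overline{\cG_\full})$. Here the precise form of $|\cdot|_\full$, the equivalence in Lemma~\ref{length-convergence-lem}, and the coarse Lipschitz inequality of Lemma~\ref{length function lemma} are doing the decisive work; everything else is the routine packaging of a topological-amenability net.
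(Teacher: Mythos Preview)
Your proposal is not a proof but a strategy sketch: you never actually construct the net $m_i$, and you explicitly flag the decisive step (``I expect the delicate point to be exactly the bookkeeping\ldots'') as unfinished. More importantly, the route you outline---building approximately invariant maps $m_i\colon\sfX\rtimes\Gamma\to\Prob(\Gamma)$ directly---is genuinely different from what the paper does, and the ``structural tension'' you identify is an artifact of that choice of target.

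The paper avoids your difficulty by changing the target from $\Prob(\Gamma)$ to $\Prob(X)$. Concretely, it defines a single map (not a net)
\[
\omega(x,g)=\sum_{i=1}^n\bigl(n+\min\{|x_i|_X,|\pi_g^{-1}(x_i)|_X\}\bigr)\,\delta_{x_i}\in\ell^1(X)^+,
\]
normalizes to $\mu\colon\sfX\rtimes\Gamma\to\Prob(X)$, and shows by direct $\ell^1$-estimates (Lemma~\ref{lem-equiv-comm-full}) that the dual ucp map $\mu^*\colon\ell^\infty(X)\to\ell^\infty(\sfX\rtimes\Gamma)$ lands in $C(\overline{\cG_\full})$ and is $\Gamma$-equivariant modulo $c_0(\cG_\full)$. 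Amenability then comes for free: $\Gamma\act\ell^\infty(X)$ is amenable because $\Gamma$ is exact and $X$ has finite stabilizers and finitely many orbits, and a $\Gamma$-equivariant ucp map transfers amenability to $\Gamma\act C(\partial\cG_\full)$ (cf.\ \cite[Exercise~15.2.2]{BO08}).

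The comparison: your approach tries to absorb the Property~A data for $\Gamma$ into the construction on $\sfX\rtimes\Gamma$, forcing you to reconcile two incompatible-looking invariance requirements simultaneously. The paper separates concerns: the map $\mu$ only has to be approximately equivariant and approximately translation-invariant (both modulo $c_0(\cG_\full)$), and all of the genuine amenability content is pushed into the known fact that $\Gamma\act\ell^\infty(X)$ is amenable. This is why the paper needs no net at all at the $\sfX\rtimes\Gamma$ level, and why the estimates in Lemma~\ref{lem-equiv-comm-full} are short and explicit. Your approach might be completable, but it reinvents machinery that the transfer argument makes unnecessary; the letters of $x$ naturally live in $X$, not $\Gamma$, and targeting $\Prob(X)$ rather than $\Prob(\Gamma)$ is exactly what dissolves the tension you describe.
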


We define a map $\omega \colon \sfX\rtimes \Gamma \to \ell^1(X)^+$ by
\[
\omega (x,g) := \sum_{i=1}^n (|(x,g)|_0 + \min\{|x_{i}|_X,|\pi_g^{-1}(x_i)|_X\}) \delta_{x_i} \quad \text{for} \quad (x,g) \in \sfX\rtimes \Gamma,
\]
and $\omega (\star ,g)$ is any nonzero element. Observe that
	$$ \|\omega(x,g) \|_1 = \sum_{i=1}^n |(x,g)|_0 + \min\{|x_{i}|_X,|\pi_g^{-1}(x_i)|_X\} = |(x,g)|_0^2 + |(x,g)|_1.$$
Up to normalization, define
\[
\mu\colon \sfX\rtimes \Gamma \to \Prob(X);\quad \mu (x,g) := \frac{\omega(x,g)}{\|\omega(x,g)\|_1},\quad (x,g)\in \sfX\rtimes \Gamma.
\]
We can induce a ucp map by
	$$\mu^* \colon \ell^\infty (X) \to \ell^\infty (\sfX\rtimes \Gamma);\quad f\mapsto [\sfX\rtimes \Gamma \ni (x,g) \mapsto \langle f, \mu(x,g)\rangle ].$$

\begin{lem}\label{lem-equiv-comm-full}
	The following statements hold true.
\begin{enumerate}
	\item (Equivariance) 
For any $g,h \in \Gamma$ and $\varphi \in \ell^\infty (X)$, 
	$$\mu^* ( g\cdot \varphi) - \mu^*( \varphi )(g^{-1}\, \cdot \, h) \in c_0(\cG_\full).$$

	\item (Commutativity)
For any $x,y\in \sfX$ and $\varphi\in \ell^\infty(X)$,
\begin{align*}
	&\mu^* ( \varphi ) - \mu^* (\varphi) ( x \,\cdot\, y)\in c_0 (\cG_\full).
\end{align*}
\end{enumerate}
In particular, $\mu^*(\ell^\infty(X))\subset C(\overline{\cG_\full})$ and we have a $\Gamma$-equivariant ucp map
	$$q\circ \mu^*\colon \ell^\infty(X) \to C(\partial\cG_\full),$$
where $q\colon C(\overline{\cG_\full})\to C(\partial \cG_\full)$ is the quotient map.
\end{lem}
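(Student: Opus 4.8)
The plan is to reduce both assertions to total-variation estimates for the measures $\mu(\cdot)$ and then apply the characterization of $c_0(\cG_\full)$ from Lemma \ref{c0-lemma} together with Lemma \ref{length-convergence-lem}: a bounded function lies in $c_0(\cG_\full)$ iff it tends to $0$ along every net $z_\lambda$ with $|z_\lambda|_\full\to\infty$. Since $\mu^*(\varphi)(z)=\langle\varphi,\mu(z)\rangle$ and $\|\varphi\|_\infty$ is fixed, for each statement it suffices to bound the $\ell^1$-distance $\|\mu(z)-\mu(z')\|_1$ between the two relevant probability measures on $X$ by a quantity vanishing as $|z|_\full\to\infty$. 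The decisive structural input is the quadratic growth $\|\omega(x,g)\|_1=|(x,g)|_0^2+|(x,g)|_1$ recorded before the lemma: writing $n=|(x,g)|_0$, the total mass dominates both $n^2$ and $|(x,g)|_1$, whereas each perturbation I introduce changes the unnormalized weights only by $O(n)$, with constants depending solely on the fixed data.

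For equivariance I would first compute $g^{-1}(x,k)h=(\pi_{g^{-1}}(x),g^{-1}kh)$ in $\sfX\rtimes\Gamma$. Then both $\mu^*(g\cdot\varphi)(x,k)$ and $\mu^*(\varphi)(g^{-1}(x,k)h)$ are averages of the same values $\varphi(\pi_{g^{-1}}(x_i))$, so the comparison is between the pushforward $(\pi_{g^{-1}})_*\mu(x,k)$ and $\mu(\pi_{g^{-1}}(x),g^{-1}kh)$, two measures carried by the common set $\{\pi_{g^{-1}}(x_i)\}_i$. Their weights differ only through the $\min$-terms, and property (i) of Lemma \ref{length function lemma} gives $\bigl|\,|\pi_{g^{-1}}(a)|_X-|a|_X\,\bigr|\le C_g$ for a constant $C_g$ depending only on $g$; since $\min$ is $1$-Lipschitz in each variable, every weight changes by at most $C:=\max\{C_g,C_h\}$. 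A direct estimate of $\sum_i|c_i/N-c_i'/N'|$, with $c_i=n+(\text{weight})$ and $N=\sum_i c_i$, then yields $\|(\pi_{g^{-1}})_*\mu(x,k)-\mu(\pi_{g^{-1}}(x),g^{-1}kh)\|_1\le 2nC/N'$, where $N'\ge n^2$ and also $N'\ge|(x,k)|_1-nC$.

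For commutativity I would compute, for fixed $x=(a_1,\dots,a_p)$ and $y=(b_1,\dots,b_q)$, that $x(w,k)y=(a_1,\dots,a_p,w_1,\dots,w_n,\pi_k(b_1),\dots,\pi_k(b_q),k)$; thus passing from $z$ to $xzy$ appends $p+q$ letters whose $\min$-weights are bounded by $\max_j|a_j|_X$ and $\max_j|b_j|_X$, and reweights each original letter $w_i$ from $n+v_i$ to $m+v_i$ with $m=n+p+q$. Splitting $\omega(xzy)$ into its $w$-part and the $p+q$ extra terms, the extra mass is $O(n)$ and the reweighting discrepancy is $O(n)/M$ with $M=\|\omega(xzy)\|_1\ge n^2$; summing gives $\|\mu(z)-\mu(xzy)\|_1=O(n)/M$ with constants depending only on $x,y$.

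Both bounds $2nC/N'$ and $O(n)/M$ tend to $0$ whenever $|z|_\full\to\infty$: if $n\to\infty$ they are $O(1/n)$, while if $n$ stays bounded then $|(x,k)|_1\to\infty$ drives the denominator to infinity against a bounded numerator; a brief subnet/contradiction argument makes this uniform and yields the two displayed memberships in $c_0(\cG_\full)$. Since all creation constants for the full Fock space equal $1$, one has $\pi_\ell(C_{s,\cdot})=\pi_r(C^r_{s,\cdot})=1$, so the commutativity statement (taking $x$ or $y$ trivial) is exactly the defining condition for $\mu^*(\varphi)\in\ell^\infty(\overline{\cG_\full})$; the equivariance statement with $g=e$ gives $\mu^*(\varphi)-\mu^*(\varphi)(\,\cdot\,h)\in c_0(\cG_\full)$, hence $\mu^*(\varphi)\in C(\overline{\cG_\full})$, and with $h=e$ it shows that $q\circ\mu^*$ intertwines the two $\Gamma$-actions. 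I expect the most delicate point to be the uniform vanishing when the word-length $n$ is bounded but $|(x,k)|_1\to\infty$; this is exactly where the two lower bounds $\|\omega\|_1\ge n^2$ and $\|\omega\|_1\ge|(x,k)|_1$ must be played against each other.
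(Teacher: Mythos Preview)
Your proposal is correct and follows essentially the same route as the paper: reduce both statements to $\ell^1$-estimates $\|g\cdot\mu(z)-\mu(gzh)\|_1$ and $\|\mu(z)-\mu(xzy)\|_1$, bound the unnormalized perturbations $\|g\cdot\omega(z)-\omega(gzh)\|_1$ and $\|\omega(z)-\omega(xzy)\|_1$ by $O(|z|_0)$ via the $1$-Lipschitz property of $\min$ and Lemma~\ref{length function lemma}(i), and then divide by $\|\omega(z)\|_1=|z|_0^2+|z|_1$ to conclude via Lemma~\ref{length-convergence-lem}. The paper organizes the equivariance step by first estimating $\|g\cdot\omega(z)-\omega(gz)\|_1\le |z|_0|g|_\Gamma$ and $\|\omega(z)-\omega(zh)\|_1\le |z|_0|h|_\Gamma$ separately rather than computing the pushforward directly, but this is a cosmetic difference.
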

\begin{proof}
	1. For any $z\in \sfX\rtimes \Gamma$, it is easy to compute that
\begin{align*}
	 \mu^*( g\cdot \varphi )(z) -  \mu^*( \varphi )(g^{-1} zh) 
	&= \langle \varphi,[g^{-1} \cdot \mu(z) -\mu(g^{-1}zh) ]\rangle .
\end{align*}
So we have only to prove that for any $g,h \in \Gamma$,
	$$\lim_{\sfX\rtimes \Gamma \ni z\to \infty/\cG_\full}\|\mu (gzh) - g\cdot \mu(z) \|_1=0 .$$

\begin{claim}
For any $g,h\in \Gamma$ and $z\in \sfX\rtimes \Gamma$,
	$$ \|g\cdot \omega(z)-\omega(gz)\|_1\leq |z|_0|g|_\Gamma ,\quad \|\omega(z)-\omega(zh)\|_1\leq |z|_0|h|_\Gamma.$$
\end{claim}
\begin{proof}
If we write $z=(a,t)$, $a=(a_1,\ldots,a_n)$, then it is easy to compute that
\begin{align*}
	&g\cdot \omega(z)
	=\omega(a,t)(\pi_g^{-1}\, \cdot \, )
	=\sum_{i=1}^n (n + \min\{|a_{i}|_X ,|\pi_t^{-1}(a_i)|\}) \delta_{\pi_g(a_i)};\\
	&\omega(gzh)
	=\omega( ( \pi_g(a),gth) )
	=\sum_{i=1}^n (n + \min\{|\pi_g(a_i)|_X,|\pi_{th}^{-1}(a_i)|\}) \delta_{\pi_g(a_i)},
\end{align*}
hence
\begin{align*}
	\|g\cdot \omega(z)-\omega(gzh)\|_1
	&=\sum_{i=1}^n  \left|\min\{|a_{i}|_X ,|\pi_t^{-1}(a_i)|\}- \min\{|\pi_g(a_i)|_X,|\pi_{th}^{-1}(a_i)|\right|.
\end{align*}
Using the inequality 
	$$ |\min\{a,c\}-\min\{b,c\}| \leq |a-b|,\quad \text{for }a,b,c\geq 0,$$
for the case $h=e$, we have
\begin{align*}
	\|g\cdot \omega(z)-\omega(gz)\|_1
	\leq \ & \sum_{i=1}^n  \left||a_{i}|_X - |\pi_g(a_i)|_X\right|
	\leq n |g|_\Gamma = |(a,t)|_0|g|_\Gamma .
\end{align*}
For the case $g=e$, we have
\begin{align*}
	\|\omega(z)-\omega(zh)\|_1
	\leq \ & \sum_{i=1}^n  \left||\pi_t^{-1}(a_i)| - |\pi_{th}^{-1}(a_i)|\right|
	\leq n |h^{-1}|_\Gamma = |(a,t)|_0|h|_\Gamma .
\end{align*}
\end{proof}
We compute that
\begin{align*}
	\|g\cdot\mu(z) -\mu(gzh)\|_1 
	&\leq \left\|\frac{g\cdot\omega(z)}{\|\omega(z)\|_1} - \frac{\omega(gzh)}{\|\omega(z)\|_1}\right\|_1 +\left\|\frac{\omega(gzh)}{\|\omega(z)\|_1} - \frac{\omega(gzh)}{\|\omega(gzh)\|_1}\right\|_1\\
	&\leq \frac{1}{\|\omega(z)\|_1}\left\|g\cdot\omega(z) - \omega(gzh)\right\|_1 + \left|\frac{\|\omega(gzh)\|_1}{\|\omega(z)\|_1} - 1\right|\\
	&\leq \frac{2}{\|\omega(z)\|_1}\left\|g\cdot\omega(z) - \omega(gzh)\right\|_1.
\end{align*}
By the claim, we get
\begin{align*}
	\|g\cdot\mu(z) -\mu(gzh)\|_1 
	&\leq 2(|g|_\Gamma + |h|_\Gamma)\frac{|z|_0}{|z|_0^2+|z|_1}.
\end{align*}
Now we consider $z\to \infty/\cG_\full$, which is equivalent to $|z|_0+|z|_1\to \infty$ by Lemma \ref{length-convergence-lem}. It is straightforward to prove that the right hand side of this inequality converges to 0. Hence we finish the proof of item 1.

	2. As in the proof of item 1, we have only to show that 
	$$\lim_{\sfX\rtimes \Gamma \ni z\to \infty/\cG_\full }\|\mu (xzy) - \mu(z) \|_1=0.$$
To see this, we may assume $x,y\in X$. 

\begin{claim}
For any $x,y\in X$,
	$$\|\omega(xz) - \omega(z)\|_1\leq 2|z|_0 + 1 + |x|_X,\quad \|\omega(zy) - \omega(z)\|_1\leq 2|z|_0 + 1 + |y|_X.$$
\end{claim}
\begin{proof}
Write $z=(a,t)$ and $a=(a_1,\ldots,a_n)$. Using 
\begin{align*}
	\omega(xz)
	= \omega((xa,t))
	=(n+1 + \min\{ |x|_X,|\pi_t^{-1}(x)|_X \}) \delta_{x} + \sum_{i=1}^{n} (n+1 + \min\{ |a_i|_X,|\pi_t^{-1}(a_i)|_X \}) \delta_{a_i},
\end{align*}
it is easy to see that
\begin{align*}
	\|\omega(xz)-\omega(z)\|_1
	=|n+1 + \min\{ |x|_X,|\pi_t^{-1}(x)|_X \}| + |n|
	\leq 2|z|_0+1 + |x|_X .
\end{align*}
The same computation works for $y$ (or apply $I$).
\end{proof}
By the claim, a computation similar to item 1 shows
\begin{align*}
	\|\mu(z) -\mu(xzy)\|_1 
	\leq \frac{2}{|z|_0^2 + |z|_1}(4|z|_0 + 4 + |x|_X  + |y|_X).
\end{align*}
It is straightforward to see that the right hand side converges to 0 as $z\to \infty/\cG_\full$.

The last statement is trivial by the definition of $C(\overline{\cG_\full})$.
\end{proof}

\begin{proof}[Proof of Theorem \ref{prop-amenable-full}]
We have a $\Gamma$-equivariant ucp map $q\circ \mu^* \colon \ell^\infty (X) \to C(\partial\cG_\full)$. Since $\Gamma$ is exact and since $\Gamma \act^\pi X$ has finite stabilizers and finitely many orbits, $\Gamma \act \ell^\infty(X)$ is amenable. Then by the ucp map, we get that $\Gamma \act C(\partial \cG_\full)$ is amenable (e.g.\ \cite[Exercise 15.2.2]{BO08}).
\end{proof}


\subsection{The case of symmetric Fock spaces}\label{Bi-exactness on symmetric Fock spaces}

As in the case of full Fock spaces, we use the framework given in Subsection \ref{Fock spaces as semigroups}. Recall that we use algebras
	$$ \cC:=\mathrm{C}^*\{ e^{\ri W(x)}\mid x\in X \},\quad W(x):=\ell(x)+\ell(x)^* $$
and $\cC_r:=J_\sym \cC J_\sym =\cC$. Observe that $\Ad(U_g^\pi)$ for $g\in \Gamma$ defines a $\Gamma$-action $e^{\ri W(x)}\mapsto e^{\ri W(\pi_g(x))}$. We define C$^*$-algebras by
\begin{align*}
	&\ell^\infty(\overline{\cG_\sym}):=\{ f\in \ell^\infty(\sfY\rtimes \Gamma)\mid [\pi_\ell(a),f],\ [\pi_r(a),f]\in \K(\cG_\sym)\text{ for all }a\in \cC \};\\
	&C(\overline{\cG_\sym}):=\{ f\in \ell^\infty(\overline{\cG_\sym})\mid f-f(\, \cdot \, h)\in c_0(\cG_\sym) \text{ for all }h\in \Gamma\}.
\end{align*}
We will show that they contain $c_0(\cG_\sym)$, hence we can define boundary C$^*$-algebras equipped with actions
\begin{align*}
	\Ad(U_g^\pi\otimes \lambda_g\rho_h) &\colon \Gamma\times \Gamma\act \ell^\infty(\partial\cG_\sym):=\ell^\infty(\overline{\cG_\sym})/c_0(\cG_\sym);\\
	\Ad(U_g^\pi\otimes\lambda_g)&\colon \Gamma \act C(\partial\cG_\sym):=C(\overline{\cG_\sym})/c_0(\cG_\sym).
\end{align*}
Our goal is to prove the following theorem.
\begin{thm}\label{prop-amenable-sym}
	Assume that $\Gamma$ is exact and $\Gamma \act^\pi X$ has finite stabilizers and finitely many orbits. Then the action $\Gamma \act C(\partial \cG_\sym)$ is amenable. In particular, $\Gamma\times \Gamma\act \ell^\infty(\partial\cG_\sym)$ is amenable.
\end{thm}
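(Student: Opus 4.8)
The plan is to mirror the proof of Theorem \ref{prop-amenable-full} and reduce everything to the construction of a single $\Gamma$-equivariant ucp map $\ell^\infty(X)\to C(\partial\cG_\sym)$, the only genuinely new difficulty being that the symmetric creation operators are unbounded. First, the ``in particular'' clause is immediate from the first assertion by Lemma \ref{bi-exact-lem}, so I concentrate on amenability of $\Gamma\act C(\partial\cG_\sym)$. Since the weight $\omega(x,g)\in\ell^1(X)^+$ used in the full case is symmetric in the entries of $x$, it descends along the canonical surjection $Q\colon\sfX\rtimes\Gamma\to\sfY\rtimes\Gamma$ to a well-defined map on $\sfY\rtimes\Gamma$; hence so do $\mu$ and the ucp map $\mu^*\colon\ell^\infty(X)\to\ell^\infty(\sfY\rtimes\Gamma)$. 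Because $|([x],g)|_\sym=|(x,g)|_\full$ and Lemma \ref{length-convergence-lem} applies to the symmetric length, the equivariance and commutativity estimates of Lemma \ref{lem-equiv-comm-full} transfer verbatim; in particular $\mu^*(g\cdot\varphi)-\mu^*(\varphi)(g^{-1}\,\cdot\,h)\in c_0(\cG_\sym)$, which simultaneously yields the right $\Gamma$-invariance required for membership in $C(\overline{\cG_\sym})$ and the $\Gamma$-equivariance of the induced boundary map.

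The heart of the matter is to show $\mu^*(\varphi)\in\ell^\infty(\overline{\cG_\sym})$, i.e. $[\pi_\ell(a),\mu^*(\varphi)],[\pi_r(a),\mu^*(\varphi)]\in\K(\cG_\sym)$ for every $a\in\cC=\mathrm C^*\{e^{\ri W(x)}\}$. By the Leibniz rule for commutators together with the fact that $\pi_\ell(\cC)\subset\mathrm M(\K(\cG_\sym))$ (established below) it suffices to treat the generators $a=e^{\ri W(x)}$, and $\pi_r$ is handled identically since $J_\sym\cC J_\sym=\cC$; taking $\varphi$ real-valued we may assume $f:=\mu^*(\varphi)$ self-adjoint. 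I would then pass from the bounded Weyl unitary to the unbounded field operator through the Duhamel formula
\[
[\pi_\ell(e^{\ri W(x)}),f]=\ri\Big(\int_0^1\pi_\ell(e^{\ri tW(x)})\,[\pi_\ell(W(x)),f]\,\pi_\ell(e^{-\ri tW(x)})\rd t\Big)\pi_\ell(e^{\ri W(x)}),
\]
write $[\pi_\ell(W(x)),f]=[\pi_\ell(\ell(x)),f]-[\pi_\ell(\ell(x)),f]^*$, and finally invoke Lemma \ref{lem-basic-semigp}(2), whose hypothesis here reads $\pi_\ell(C_{x,\cdot})\big(f-f(x\,\cdot\,)\big)\in c_0(\cG_\sym)$. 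Granting the two analytic inputs below, the integrand lies in the ideal $\K(\cG_\sym)$ and varies norm-continuously in $t$, so the integral, and hence the whole commutator, lies in $\K(\cG_\sym)$.

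The main obstacle is precisely to supply those two inputs, and this is where the bosonic structure enters. On the one hand, by Lemma \ref{const-sym-lem} the structure constants obey $C_{x,y}\le\sqrt{|y|_0+1}$, growing like the square root of the particle number, whereas the commutativity estimate behind Lemma \ref{lem-equiv-comm-full}(2) gives $\|\mu(z)-\mu(xz)\|_1=O\big((|z|_0+1)/(|z|_0^2+|z|_1)\big)$; multiplying, $C_{x,y}\,|f(z)-f(xz)|=O\big((|z|_0+1)^{3/2}/(|z|_0^2+|z|_1)\big)$, which tends to $0$ as $z\to\infty/\cG_\sym$, i.e. as $|z|_0+|z|_1\to\infty$, giving the required membership in $c_0(\cG_\sym)$ (and symmetrically for the $\cdot\,x$ side, handling $\pi_r$). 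On the other hand, $\pi_\ell(e^{\ri tW(x)})\in\mathrm M(\K(\cG_\sym))$ for every $t$: via Lemma \ref{lem-multiplier} this reduces to $[\pi_\ell(e^{\ri tW(x)}),f']\in\K(\cG_\sym)$ for $f'\in c_0(\cG_\sym)$, which follows because $e^{\ri tW(x)}$ acts only on the $x$-mode of the bosonic factorization $\cF_\sym=\bigotimes_{x}\cF_\sym(\C\delta_x)$ and spreads the occupation number with $\ell^2$-summable tails, uniformly over the bounded occupations that small$/\cG_\sym$ sets allow (each $s\Gamma t$ has constant particle number $|s|_0+|t|_0$). With $\mu^*(\varphi)\in\ell^\infty(\overline{\cG_\sym})$ in hand, the right $\Gamma$-invariance from the first paragraph places $\mu^*(\varphi)$ in $C(\overline{\cG_\sym})$, so $q\circ\mu^*\colon\ell^\infty(X)\to C(\partial\cG_\sym)$ is a $\Gamma$-equivariant ucp map; since $\Gamma$ is exact and $\Gamma\act^\pi X$ has finite stabilizers and finitely many orbits, $\Gamma\act\ell^\infty(X)$ is amenable, and amenability transfers to $\Gamma\act C(\partial\cG_\sym)$ by \cite[Exercise 15.2.2]{BO08}, completing the proof.
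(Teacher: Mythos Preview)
Your proposal is correct and follows essentially the same route as the paper: descend $\mu$ along $Q$ to $\sfY\rtimes\Gamma$, transfer the equivariance and commutativity estimates from the full case (absorbing the extra $\sqrt{|z|_0+1}$ coming from Lemma~\ref{const-sym-lem}), and then use a Duhamel-type integral to pass from $[\pi_\ell(W(x)),f]\in\K(\cG_\sym)$ to $[\pi_\ell(e^{\ri W(x)}),f]\in\K(\cG_\sym)$; this is exactly the content of Lemmas~\ref{lem-equiv-comm-sym} and~\ref{lem-equiv-comm-sym2}. Two technical points deserve comment. First, for the multiplier membership $\pi_\ell(e^{\ri tW(x)})\in\mathrm M(\K(\cG_\sym))$ the paper does \emph{not} use the bosonic tensor factorization; instead it expands $e^{\ri tW(x)}1_A=\sum_n\frac{(\ri t)^n}{n!}W(x)^n1_A$, shows each term has range in a small/$\cG_\sym$ set (this is where finite stabilizers enter), and proves norm convergence for $|t|<1/2$. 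Your factorization idea can be made to work, but carrying it out also requires finite stabilizers to see that truncating the $x$-occupation on a set $s\Gamma t$ still yields something small/$\cG_\sym$, so the sketch is less self-contained than it reads. Second, the norm continuity of the Duhamel integrand $t\mapsto\pi_\ell(e^{\ri tW(x)})[\pi_\ell(W(x)),f]\pi_\ell(e^{-\ri tW(x)})$ is asserted in your outline but does not follow formally from your two ``analytic inputs''; the paper isolates this as a separate step (item~2 of Lemma~\ref{lem-equiv-comm-sym2}), proving that $t\mapsto\pi_\ell(e^{\ri tW(x)})f'$ is norm-continuous for every $f'\in c_0(\cG_\sym)$, again via finite stabilizers. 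With that continuity supplied, your argument is complete and matches the paper's.
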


Let $\omega \colon \sfX\rtimes \Gamma \to \ell^1(X)^+$ be given in Subsection \ref{Bi-exactness on full Fock spaces}. Observe that it induces maps 
\begin{align*}
	&\omega_\sym \colon \sfY\rtimes \Gamma \to \ell^1(X)^+;\quad \omega_\sym ([x],g):=\omega(x,g);\\
	&\mu_\sym\colon \sfY\rtimes \Gamma \to \Prob(X);\quad \mu_\sym ([x],g) := \frac{\omega_\sym([x],g)}{\|\omega_\sym([x],g)\|_1} = \mu (x,g).
\end{align*}
Define 
	$$\mu_\sym^* \colon \ell^\infty (X) \to \ell^\infty (\sfY\rtimes \Gamma);\quad f\mapsto [\sfY\rtimes \Gamma \ni ([x],g) \mapsto \langle f, \mu_\sym([x],g)\rangle ].$$

\begin{lem}\label{lem-equiv-comm-sym}
	The following statements hold true.
\begin{enumerate}
	\item (Equivariance) 
For any $g,h \in \Gamma$ and $\varphi \in \ell^\infty (X)$,
	$$\mu_\sym^* ( g\cdot \varphi) - \mu_\sym^*( \varphi )(g^{-1}\, \cdot \, h) \in c_0(\cG_\sym).$$

	\item (Commutativity) 
For any $x,y\in X$ and $\varphi\in \ell^\infty(X)$,
\begin{align*}
	&\pi_\ell(C_{x,\cdot})(\mu_\sym^* ( \varphi ) - \mu_\sym^* (\varphi) ( x \,\cdot\, ))\in c_0 (\cG_\sym);\\
	&\pi_r(C^r_{y,\cdot})(\mu_\sym^* ( \varphi ) - \mu_\sym^* (\varphi) ( \,\cdot\, y))\in c_0 (\cG_\sym).
\end{align*}
\end{enumerate}
\end{lem}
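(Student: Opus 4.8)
The plan is to reduce both statements to convergence statements via Lemma~\ref{c0-lemma}(2): a bounded function lies in $c_0(\cG_\sym)$ if and only if it tends to $0$ along every net $z_\lambda\to\infty/\cG_\sym$, which by Lemma~\ref{length-convergence-lem} is the same as $|z_\lambda|_\sym=|z_\lambda|_0+|z_\lambda|_1\to\infty$. For the equivariance statement (item~1) I would transport the full-Fock-space computation of Lemma~\ref{lem-equiv-comm-full}(1). Since $\mu_\sym([x],g)=\mu(x,g)$ and the canonical surjection $Q\colon\sfX\rtimes\Gamma\to\sfY\rtimes\Gamma$ is $\Gamma\times\Gamma$-equivariant for the translations and satisfies $|([x],g)|_\sym=|(x,g)|_\full$, both the pointwise identity
\[
\mu_\sym^*(g\cdot\varphi)(z)-\mu_\sym^*(\varphi)(g^{-1}zh)=\langle\varphi,\,g^{-1}\cdot\mu_\sym(z)-\mu_\sym(g^{-1}zh)\rangle
\]
and the norm estimate $\|g\cdot\mu(w)-\mu(gwh)\|_1\to 0$ proved there descend along $Q$, yielding item~1 with no new work.

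The real content is item~2, where the new feature compared with the full case is the unbounded multiplier $\pi_\ell(C_{x,\cdot})$. First I would compute the difference pointwise. Writing $z=([y],g)$ with $[y]\in Y_n$ (so $n=|z|_0$) and fixing a representative $w=((y_1,\dots,y_n),g)\in\sfX\rtimes\Gamma$, the definition of $\mu_\sym^*$ together with Lemma~\ref{lem-action} gives $\mu_\sym^*(\varphi)(x\cdot)(z)=\langle\varphi,\mu(xw)\rangle$, so that
\[
\bigl(\mu_\sym^*(\varphi)-\mu_\sym^*(\varphi)(x\cdot)\bigr)(z)=\langle\varphi,\,\mu(w)-\mu(xw)\rangle.
\]
The one-sided estimate $\|\omega(xw)-\omega(w)\|_1\le 2|w|_0+1+|x|_X$ isolated inside the proof of Lemma~\ref{lem-equiv-comm-full}(2), together with the same normalization argument used there, then gives
\[
\Bigl|\bigl(\mu_\sym^*(\varphi)-\mu_\sym^*(\varphi)(x\cdot)\bigr)(z)\Bigr|\le\|\varphi\|_\infty\,\frac{2(2n+1+|x|_X)}{n^2+|z|_1}.
\]

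It remains to absorb the factor $\pi_\ell(C_{x,\cdot})$, which by Lemma~\ref{lem-action} is the function $z\mapsto C_{x,[y]}$ and is bounded by $\sqrt{n+1}$ thanks to Lemma~\ref{const-sym-lem}. Multiplying, I obtain
\[
\Bigl|\pi_\ell(C_{x,\cdot})\bigl(\mu_\sym^*(\varphi)-\mu_\sym^*(\varphi)(x\cdot)\bigr)(z)\Bigr|\le\|\varphi\|_\infty\,\sqrt{n+1}\,\frac{2(2n+1+|x|_X)}{n^2+|z|_1}.
\]
I expect this final estimate to be the main obstacle: one must verify that the $\sqrt{n+1}$ growth of $C_{x,[y]}$ is beaten by the decay of the difference. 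The key point is the comparison of exponents --- the numerator is of order $n^{3/2}$ while the denominator is at least $\max\{n^2,|z|_1\}$ --- so, passing to subnets on which either $n$ stays bounded (forcing $|z|_1\to\infty$) or $n\to\infty$, the right-hand side tends to $0$ in both regimes; here one uses that $\{\star\}\times\Gamma\in\cG_\sym$, so that eventually $n\ge 1$. This places the product in $c_0(\cG_\sym)$. The right-creation statement is proved by the identical argument applied to right translations, using the right-sided estimate $\|\omega(wy)-\omega(w)\|_1\le 2|w|_0+1+|y|_X$ and the bound $C^r_{y,\cdot}=C_{y,\cdot}\le\sqrt{n+1}$ (valid since $\sfY=\bigoplus_X\N$ is abelian, whence $\ell$ and $r$, and $C$ and $C^r$, agree).
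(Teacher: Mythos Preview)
Your proposal is correct and follows essentially the same route as the paper's proof: both items are transported from the full-Fock case via the surjection $Q$ and Lemma~\ref{length-convergence-lem}, and for item~2 the extra factor $\pi_\ell(C_{x,\cdot})$ is controlled by the bound $C_{x,[y]}\le\sqrt{n+1}$ from Lemma~\ref{const-sym-lem}, leading to the same $\sqrt{n+1}\cdot O(n)/(n^2+|z|_1)$ estimate. Your explicit subnet argument and the remark that $\{\star\}\times\Gamma\in\cG_\sym$ (so the singular case $n=0$ is eventually avoided) make precise what the paper leaves as ``straightforward''.
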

\begin{proof}
	1. As in the proof of Lemma \ref{lem-equiv-comm-full}, we have only to prove
	$$\lim_{\sfY\rtimes \Gamma \ni z\to \infty/\cG_\sym}\|\mu_\sym (gzh) - g\cdot \mu_\sym(z) \|_1=0 .$$
Let $Q\colon \sfX\rtimes \Gamma \to \sfY\rtimes \Gamma$ be the canonical surjection. For any $z\in \sfX\rtimes \Gamma$, we have
	$$\mu_\sym (gQ(z)h)=\mu (gzh),\quad g\cdot \mu_\sym(Q(z))=g\cdot \mu(z).$$
Since $z\to \infty/\cG_\full$ and $Q(z)\to \infty/\cG_\sym$ are equivalent by Lemma \ref{length-convergence-lem}, we get
	$$\lim_{Q(z)\to \infty/\cG_\sym}\|\mu_\sym (gQ(z)h) - g\cdot \mu_\sym(Q(z)) \|_1 = \lim_{z\to \infty /\cG_\full}\|\mu (gzh) - g\cdot \mu(z) \|_1=0,$$
where the last equality is by Lemma \ref{lem-equiv-comm-full}.

	2. As in the proof of Lemma \ref{lem-equiv-comm-full} and by using the inequality in Lemma \ref{const-sym-lem}, we have
\begin{align*}
	|\pi_\ell(C_{x,\cdot})(z)(\mu_\sym^*(\varphi)(z)-\mu_\sym^*(\varphi)(xz))|
	&\leq  \|\varphi\|_\infty \sqrt{|z|_0+1} \, \|\mu_\sym (z)-\mu_\sym (xz)\|_1.
\end{align*}
As in the proof of item 1, the right hand side coincides with the term by $\mu$, hence by the proof of item 2 in Lemma \ref{lem-equiv-comm-full}, we have (up to exchanging $z$ with $Q(z)$),
\begin{align*}
	\sqrt{|Q(z)|_0+1}\, \|\mu_\sym(Q(z)) -\mu_\sym(Q(xz))\|_1 
	&= \sqrt{|z|_0+1}\, \|\mu(z) -\mu(xz)\|_1 \\
	&\leq \frac{2\sqrt{|z|_0+1}}{|z|_0^2 + |z|_1}(4|z|_0 + 4 + |x|_X ).
\end{align*}
It is straightforward to show that the last term converges to 0 as $z\to \infty/\cG_\full$, which is equivalent to $Q(z)\to \infty/\cG_\sym$. The same argument works for $y$.
\end{proof}

We next transfer the commutativity condition in the previous lemma to $e^{\ri W(x)}$. 

\begin{lem}\label{lem-equiv-comm-sym2}
	The following statements hold true.
\begin{enumerate}
 	\item For any $x\in X$ and $t\in \R$,
	$$ \pi_\ell(e^{\ri t W(x)}),\pi_r(e^{\ri t W(x)})\in \mathrm{M}(\K(\cG_\sym)) .$$

	\item For any $x\in  X$ and $f\in c_0(\cG_\sym)$, the maps
	$$ \R\ni t \mapsto \pi_\ell(e^{\ri t W(x)})f,\ \pi_r(e^{\ri t W(x)})f\in \K(\cG_\sym) $$
are norm continuous.

	\item For any $x \in X$ and $b \in \ell^\infty(\sfY \rtimes \Gamma)$, if $[\pi_\ell(W(x)) ,b]\in \K(\cG_\sym)$, then
	$$ \pi_\ell(e^{\ri W(x)})  b\pi_\ell(e^{-\ri W(x)}) -b  \in \K(\cG_\sym).$$
In particular we can put $b=\mu^*_\sym(\varphi)$ for $\varphi\in \ell^\infty(X)$. 
\end{enumerate}
\end{lem}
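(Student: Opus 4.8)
The plan is to exploit the factorization of the symmetric Fock space in the single mode $x$. Since $\sfY=\bigoplus_X\N$, I split off the $x$-coordinate, $\ell^2(\sfY)=\ell^2(\N)\otimes\ell^2(\sfY')$ with $\sfY'=\bigoplus_{X\setminus\{x\}}\N$. Under this identification $\ell(x)=a^*\otimes 1$ with $a^*\delta_k=\sqrt{k+1}\,\delta_{k+1}$ (Lemma \ref{const-sym-lem}), so $W(x)=(a+a^*)\otimes 1$ and $\pi_\ell(e^{\ri tW(x)})=D(t)\otimes 1\otimes 1$, where $D(t):=e^{\ri t(a+a^*)}\in\B(\ell^2(\N))$ is the Weyl (displacement) operator. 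The one structural fact I use throughout is that $|\cdot|_0$ is constant on each $s\Gamma t$ (as in the proof of Lemma \ref{length-convergence-lem}); hence every $f$ with support small$/\cG_\sym$ is supported where the total particle number, a fortiori the $x$-occupation, is bounded by some $N$, so $f=(P_{\le N}\otimes 1\otimes 1)f$ with $P_{\le N}$ the projection onto $\{k\le N\}\subset\ell^2(\N)$. Finally, all $\pi_r$-assertions reduce to the $\pi_\ell$-ones by conjugating with $J$: indeed $\pi_r(a)=J\pi_\ell(J_\sym aJ_\sym)J$ by Lemma \ref{lem-action}, $J_\sym W(x)J_\sym=W(Ix)$, and $\Ad(J)$ preserves $c_0(\cG_\sym)$ and $\K(\cG_\sym)$.

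For item 1, by Lemma \ref{lem-multiplier} it suffices to prove $[\pi_\ell(e^{\ri tW(x)}),f]\in\K(\cG_\sym)$ for $f$ with support small$/\cG_\sym$; the general case follows by density and closedness of $\K(\cG_\sym)$. Writing $D=D(t)$ and fixing $N$ as above, I choose the projection $Q_M$ onto $\{k\le M\}$ in the $x$-mode, $M\ge N$, and split $[\pi_\ell(D),f]$ into the four pieces $Q_M(\,\cdot\,)Q_M$, $(1-Q_M)(\,\cdot\,)Q_M$, $Q_M(\,\cdot\,)(1-Q_M)$, $(1-Q_M)(\,\cdot\,)(1-Q_M)$. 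The last vanishes because $\langle z'|[\pi_\ell(D),f]|w\rangle=(f(w)-f(z'))\langle z'|\pi_\ell(D)|w\rangle$ forces $w$ or $z'$ into $\supp f$, where $k_x\le N\le M$. The corner $Q_M[\pi_\ell(D),f]Q_M$ is supported, in both row and column, on the set of elements obtained from $\supp f$ by altering only the $x$-occupation to a value $\le M$; this is a finite union of translates $x^{j}\cdot(\,\cdot\,)$, $j\le M$, of a small$/\cG_\sym$ set, hence small$/\cG_\sym$, so the corner lies in $c_0(\cG_\sym)\B(\ell^2)c_0(\cG_\sym)\subset\K(\cG_\sym)$. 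Using $(1-Q_M)f=0$ one has $(1-Q_M)[\pi_\ell(D),f]Q_M=(1-Q_M)\pi_\ell(D)f$, whose norm is at most $\big(\sum_{k\le N}\|(1-Q_M)D\delta_k\|^2\big)^{1/2}\|f\|$; since the $D\delta_k$ are unit vectors, each tail tends to $0$ and the finite sum vanishes as $M\to\infty$, and symmetrically for the other off-diagonal term. Letting $M\to\infty$ gives the claim.

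Item 2 is immediate from the same factorization: $\pi_\ell(e^{\ri tW(x)})f=(D(t)P_{\le N}\otimes 1\otimes 1)f$, and $t\mapsto D(t)P_{\le N}=\sum_{k\le N}|D(t)\delta_k\rangle\langle\delta_k|$ is norm continuous, being a finite sum of the norm-continuous maps $t\mapsto D(t)\delta_k$ (strong continuity of the unitary group $e^{\ri t(a+a^*)}$ on the fixed vectors $\delta_0,\dots,\delta_N$). Membership in $\K(\cG_\sym)$ is item 1 together with $f\in c_0(\cG_\sym)\subset\K(\cG_\sym)$.

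Item 3 is the Duhamel step and is where the unboundedness of $\pi_\ell(W(x))$ must be handled. Put $U_t:=\pi_\ell(e^{\ri tW(x)})\in\mathrm{M}(\K(\cG_\sym))$ (item 1), $\beta_t:=\Ad(U_t)$, and $k:=[\pi_\ell(W(x)),b]\in\K(\cG_\sym)$; the goal is $\beta_1(b)-b=\ri\int_0^1\beta_t(k)\,dt$. The integrand is $\K(\cG_\sym)$-valued since $U_t$ is a multiplier, and it is norm continuous: approximating $k$ in norm by elements $f_1cf_2$ with $f_1,f_2\in c_0(\cG_\sym)$, the norm continuity of $t\mapsto U_tf_1$ and $t\mapsto f_2U_{-t}$ from item 2 (and its adjoint) passes uniformly to $t\mapsto U_tkU_{-t}$, so the Bochner integral exists in $\K(\cG_\sym)$. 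Rather than differentiate the operator-valued map directly, I pair against vectors $\xi,\eta$ in the dense domain of $\pi_\ell(W(x))$, which $U_t$ preserves as it commutes with $\pi_\ell(W(x))$; then $\tfrac{d}{dt}\langle\beta_t(b)\xi,\eta\rangle=\ri\langle[\pi_\ell(W(x)),b]U_{-t}\xi,U_{-t}\eta\rangle=\ri\langle\beta_t(k)\xi,\eta\rangle$ by self-adjointness of $\pi_\ell(W(x))$, and integrating this scalar identity over $[0,1]$ together with density of the domain yields $\beta_1(b)-b=\ri\int_0^1\beta_t(k)\,dt\in\K(\cG_\sym)$. I expect the two genuine obstacles to be the operator-norm estimate $\|(1-Q_M)D(t)P_{\le N}\|\to0$ in item 1 and the rigorous weak-form Duhamel argument in item 3; both become tractable precisely because the $x$-occupation is bounded on the supports of $c_0(\cG_\sym)$-functions.
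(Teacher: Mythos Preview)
Your proof is correct, and for item~3 it is essentially the paper's Duhamel argument (weak differentiation against a dense domain, Bochner integration of the norm-continuous $\K(\cG_\sym)$-valued integrand). For items~1 and~2, however, you take a genuinely different route: you exploit the tensor factorization $\ell^2(\sfY)=\ell^2(\N)\otimes\ell^2(\sfY')$ in the single mode $x$, so that $\pi_\ell(e^{\ri tW(x)})=D(t)\otimes 1\otimes 1$ with $D(t)$ the displacement unitary on $\ell^2(\N)$, and then argue via the finite-rank cutoffs $P_{\le N}$, $Q_M$ in that mode. The paper instead shows $\pi_\ell(e^{\ri tW(x)})1_A\in\K(\cG_\sym)$ directly by expanding the exponential as a norm-convergent power series on $1_A$ (using $\|W(x)P_{\le m}\|\le 2\sqrt{m+1}$) for item~1, and for item~2 computes $\sup_g\|(e^{\ri tW(x)}-1)\delta_{s\pi_g(t)}\|$ explicitly using finite stabilizers. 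Your factorization approach is cleaner and makes item~2 essentially immediate from strong continuity of $D(t)$ on finitely many vectors; it also avoids the radius-of-convergence estimate. The one place where your write-up is a bit compressed is the assertion in item~1 that the set obtained from $\supp f$ by altering the $x$-occupation to a value $\le M$ is small$/\cG_\sym$: this is true, but to see it one writes $\supp f\subset\bigcup_i s_i\Gamma t_i$, strips the $x$-letters from $s_i$ to get $s_i'$, and uses finite stabilizers to control the finitely many $g$ for which $\pi_g(t_i)$ meets $x$; the resulting set lies in $\bigcup_i\bigcup_{j\le M}x^js_i'\Gamma t_i$ union a finite set. This is exactly the kind of bookkeeping the paper does in its first claim for item~1, so both arguments ultimately invoke finite stabilizers at the same point.
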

\begin{proof}
	1. Up to $\Ad(J)$, we have only to see $\pi_\ell$. We will prove  $\pi_\ell(e^{\ri tW(x)})1_A\in \K(\cG_\sym) $ for any $A\subset \sfY\rtimes \Gamma$ which is small $/\cG_\sym$ and any $0<|t|<1/2$. Fix such $A$ and $t$. 

\begin{claim}
There is $B\subset \sfY\rtimes \Gamma$ which is small$/\cG_\sym$ such that
	$$(W(x)^n\otimes 1) 1_{A} = 1_B(W(x)^n\otimes 1) 1_{A}.$$
In particular they are contained in $\K(\cG_\sym)$.
\end{claim}
\begin{proof}
	We may assume $n=1$. For simplicity, assume $A=s\Gamma t$ for some $s,t\in \sfY$. The range of $(\ell(x)\otimes 1)1_A$ has a basis $\delta_{xsgt}$, while a basis from $(\ell(x)^*\otimes 1)1_A$ is by subwords from $sgt$ (for which the letter $x$ is removed). By counting  these words and since $\Gamma \act^\pi X$ has finite stabilizers, it is easy to find the desired $B$.
\end{proof}

\begin{claim}
	The sum $\sum_{n\geq 0}\frac{1}{n!}((\ri tW(x))^n\otimes 1) 1_{A}$ has absolute convergence in norm.
\end{claim}
\begin{proof}
	For $m\in \N$, let $P_{\leq m}$ be the orthogonal projection onto the subspace spanned by vectors having tensor length less than $m$ in $\cF_\sym$. Observe that $\|W(x)P_{\leq m}\|_\infty\leq 2\sqrt{m+1}$ for all $m\in \N$. Since $A$ is small$/\cG_\sym$, there is a large $m\in \N$ such that $1_A\leq P_{\leq m}\otimes 1_\Gamma$. It holds that
\begin{align*}
	 \|(W(x)^n\otimes 1)1_A\|_\infty 
	&\leq \|W(x)^nP_{\leq m}\|_\infty \\
	&= \|W(x)^{n-1} P_{\leq m+1}W(x)P_{\leq m}\|_\infty \\
	&\leq \|W(x)P_{\leq m+n-1}\|_\infty \cdots \|W(x)P_{\leq m+1}\|_\infty\|W(x)P_{\leq m}\|_\infty \\
	&\leq (2\sqrt{m+n}) \cdots (2\sqrt{m+2})(2\sqrt{m+1}) ,
\end{align*}
so that 
\begin{align*}
	\sum_{n\geq 0}\frac{t^n}{n!}\|(W(x))^n\otimes 1) 1_{A}\|_\infty
	&\leq \sum_{n\geq 0}(2t)^n\frac{(\sqrt{m+1})(\sqrt{m+2})\cdots (\sqrt{m+n})}{n!}.
\end{align*}
Since $2t<1$, it is easy to get the conclusion.
\end{proof}

Using these two claims, we get $e^{\ri tW(x)}1_A =\sum_{n\geq 0}\frac{1}{n!}((\ri tW(x))^n\otimes 1) 1_{A} \in \K(\cG_\sym)$. This shows $e^{\ri tW(x)}\in \mathrm{M}(\K(\cG_\sym))$ for all $|t|<1/2$, hence for all $t\in \R$.

	2. Up to $\Ad(J)$, we have only to see $\pi_\ell$. Put $F(t):=\pi_\ell(e^{\ri tW(x)})-1$ and we have only to show $\lim_{t\to0}\| F(t) f\|_\infty\to0$ for any $f\in c_0(\cG_\sym)$. By approximating $f$, we may assume that $f=1_A$, $A=s\Gamma t$ for some $s,t\in \sfY$. Write $s=[a_1,\ldots,a_n]$ and $t=[b_1,\ldots,b_m]$. Since 
	$$s\Gamma t=\{ s g t = (s\pi_g(t),g) \mid g\in \Gamma\},$$
we have $1_A = \sum_{g\in \Gamma} 1_{\{s\pi_g(t)\}}\otimes 1_{\{g\}}$.  It holds that 
\begin{align*}
	\|F(t)1_A \|_\infty
	&=\left\|\sum_{g\in \Gamma}[(e^{\ri tW(x)} -1)1_{\{s\pi_g(t)\}}] \otimes e_{g,g}\right\|_\infty\\
	&=\sup_{g\in \Gamma}\|(e^{\ri tW(x)} -1) 1_{\{s\pi_g(t)\}}\|_\infty
	=\sup_{g\in \Gamma}\|(e^{\ri tW(x)} -1) \delta_{s\pi_g(t)}\|_{\cF_\sym}\\
	&\leq \sup_{g\in \Gamma}\|(e^{\ri tW(x)} -1)\ell(\pi_g(t)) \delta_{s}\|_{\cF_\sym}=\sup_{g\in \Gamma}\|\ell(\pi_g(t))(e^{\ri tW(x)} -1) \delta_{s}\|_{\cF_\sym}.
\end{align*}
We have to show that the last term converges to 0 as $t\to 0$. To see this, observe that, since $\Gamma \act^\pi X$ has finite stabilizers, there is a finite set $E\subset \Gamma$ such that 
	$$ \pi_g(b_i)\not\in \{x,a_1,\ldots,a_n\},\quad \text{for all }i=1,\ldots,m\quad\text{and}\quad g\not\in E.$$
Then for any $g\not\in E$, since we know the exact value of $C_{\pi_g(b),\cdot}$ by (the proof of) Lemma \ref{const-sym-lem} and since $(e^{\ri tW(x)} -1) \delta_a$ is contained in the space spanned by vectors indexed by $\{x,a_1,\ldots,a_n\}$, we have
\begin{align*}
	\|\ell(\pi_g(b)) (e^{\ri tW(x)} -1) \delta_a\|_{\cF_\sym}
	&\leq\sqrt{2}\cdots \sqrt{m} \, \| (e^{\ri tW(x)} -1) \delta_a\|_{\cF_\sym} \quad \text{for all } g\not\in E.
\end{align*}
The last term goes to 0 as $t\to 0$. Since $E$ is finite, we get the conclusion.

	3. Up to $\Ad(J)$, we have only to see $\pi_\ell$. Put  $a:=W(x)\otimes 1$ and $\pi_\ell(e^{\ri W(x)})=e^{\ri a}$. Consider the map
	$$F\colon \R \to \K(\cG_\sym);\quad  F(t) :=e^{\ri ta}[b,a]e^{-\ri ta}.$$
Since $e^{\ri ta}\in \mathrm{M}(\K(\cG_\sym))$ by item 1 and $[b,a]\in \K(\cG_\sym)$ by assumption, $F(t)$ is indeed contained in $\K(\cG_\sym)$. By item 2, it is norm continuous.

Let $\cF_{\rm alg}\subset \cF_\sym$ be the image of the algebraic Fock space and fix $\xi,\eta\in \cF_{\rm alg}\otimes_{\rm alg} \ell^2(\Gamma)$. Observe that by Stone's theorem, the function $f\colon \R\ni t \mapsto \langle e^{\ri ta}\xi ,\eta \rangle\in \C$ is differentiable and $f'(t)=\langle \ri ae^{\ri ta}\xi,\eta\rangle$. Hence the function 
	$$g(t):=\langle e^{\ri ta}be^{-\ri ta} \xi,\eta\rangle=\langle be^{-\ri ta} \xi,e^{-\ri ta}\eta\rangle$$
is also differentiable and $g'(t)= \ri\langle F(t) \xi,\eta\rangle$. For any $s>0$, we have
\begin{align*}
	\int_{0}^s \ri\langle F(t) \xi,\eta\rangle dt= \int_{0}^s g'(t) dt
	= g(s)-g(0)
	&= \langle (e^{\ri sa}be^{-\ri sa} -b)\xi,\eta\rangle .
\end{align*}
Since $F(t)$ is norm continuous, the integral $\int_0^s F(t)dt \in \K(\cG_\sym)$ is defined, so that 
	$$e^{\ri sa}be^{-\ri sa} -b = \ri \int_0^s F(t)dt\in \K(\cG_\sym)$$
for any $s>0$. This is the conclusion. The last statement follows by Lemma \ref{lem-equiv-comm-sym} and \ref{lem-basic-semigp}.
\end{proof}

\begin{proof}[Proof of Theorem \ref{prop-amenable-sym}]

Thanks to the previous two lemmas, we have $c_0(\cG_\sym)\subset \ell^\infty(\overline{\cG_\sym})$ and $\mu^*_\sym(\ell^\infty(X))\subset C(\overline{\cG_\sym})$. In particular, there is a $\Gamma$-equivariant ucp map
	$$q\circ \mu^*_\sym\colon \ell^\infty(X) \to C(\partial\cG_\sym),$$
where $q\colon C(\overline{\cG_\sym})\to C(\partial \cG_\sym)$ is the quotient map. We then follow the proof of Theorem \ref{prop-amenable-full} and Lemma \ref{bi-exact-lem}.
\end{proof}

\subsection{The case of anti-symmetric Fock spaces}\label{Bi-exactness on anti-symmetric Fock spaces}

We keep the framework given in Subsection \ref{Fock spaces as semigroups}. We prove the following Theorem. Here we are using the framework explained in Subsection \ref{Bi-exactness and condition (AO)} for the subset $\sfZ\subset \sfY$.

\begin{thm}\label{prop-amenable-anti}
	Assume that $\Gamma$ is exact and $\Gamma \act^\pi X$ has finite stabilizers and finitely many orbits. Then the left translation $\Gamma \act C(\partial \cG_\anti)$ is amenable. In particular $\Gamma \act^\pi \sfZ$ is bi-exact/$\cG_\anti$.
\end{thm}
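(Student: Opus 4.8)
The plan is to imitate the proofs of Theorems \ref{prop-amenable-full} and \ref{prop-amenable-sym}, taking advantage of the inclusion $\sfZ \subset \sfY$ so that the weight $\omega$ and the probability-valued map $\mu_\sym$ already built restrict directly to $\sfZ$. Since $|([x],g)|_\anti = |([x],g)|_\sym$ on $\sfZ \rtimes \Gamma$ and $\cG_\anti = \{E \cap (\sfZ\rtimes\Gamma) \mid E \in \cG_\sym\}$, Lemma \ref{c0-lemma}(3) combined with Lemma \ref{length-convergence-lem} shows that for a net $(z_\lambda)_\lambda$ in $\sfZ \rtimes \Gamma$ one has $z_\lambda \to \infty/\cG_\anti$ if and only if $|z_\lambda|_\anti \to \infty$, exactly as in the symmetric case. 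I would therefore set $\mu_\anti := \mu_\sym|_{\sfZ \rtimes \Gamma}$ and define the ucp map $\mu_\anti^* \colon \ell^\infty(X) \to \ell^\infty(\sfZ \rtimes \Gamma)$ by the same formula.

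First I would establish equivariance: for $g,h \in \Gamma$ and $\varphi \in \ell^\infty(X)$,
\[
\mu_\anti^*(g\cdot\varphi) - \mu_\anti^*(\varphi)(g^{-1}\,\cdot\, h) \in c_0(\cG_\anti).
\]
Because $\pi$ preserves $\sfZ$, the translates $g^{-1}zh$ remain in $\sfZ \rtimes \Gamma$, where $\mu_\anti$ agrees with $\mu_\sym$; this is thus literally the restriction of Lemma \ref{lem-equiv-comm-sym}(1), using the above equivalence of the two notions of escape to infinity.

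The one genuinely new point is commutativity, and this is where the anti-symmetric structure enters. Here the constants satisfy $C_{x,z}, C_{x,z}^r \in \{0,\pm1\}$ by Lemma \ref{const-anti-lem}, with $C_{x,z}=0$ precisely when $xz \notin \sfZ$. I would verify, for each $x \in X$,
\[
\pi_\ell(C_{x,\cdot})\bigl(\mu_\anti^*(\varphi) - \mu_\anti^*(\varphi)(x\,\cdot\,)\bigr) \in c_0(\cG_\anti)
\]
and the analogous statement for $\pi_r$. On the set where $C_{x,z}=0$ the left-hand side vanishes identically; on the complementary set $\{z : xz \in \sfZ\}$ both $z$ and $xz$ lie in $\sfZ \subset \sfY$, so $\mu_\anti^*(\varphi)(z)$ and $\mu_\anti^*(\varphi)(xz)$ coincide with their symmetric/full counterparts, and the difference tends to $0$ along $z \to \infty/\cG_\anti$ by the estimate already proved in Lemma \ref{lem-equiv-comm-full}(2) (equivalently Lemma \ref{lem-equiv-comm-sym}(2)). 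Since $|C_{x,z}| \le 1$, multiplication by $\pi_\ell(C_{x,\cdot})$ preserves membership in $c_0(\cG_\anti)$. Together with the $g=e$ case of equivariance (which gives $f-f(\,\cdot\,h)\in c_0(\cG_\anti)$), this yields $\mu_\anti^*(\ell^\infty(X)) \subset C(\overline{\cG_\anti})$.

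Granting these, I obtain a $\Gamma$-equivariant ucp map $q\circ\mu_\anti^* \colon \ell^\infty(X) \to C(\partial\cG_\anti)$. As $\Gamma$ is exact and $\Gamma \act^\pi X$ has finite stabilizers and finitely many orbits, the action $\Gamma \act \ell^\infty(X)$ is amenable, and the ucp map then forces $\Gamma \act C(\partial\cG_\anti)$ to be amenable, exactly as in the proof of Theorem \ref{prop-amenable-full}. Finally, applying Lemma \ref{bi-exact-lem} in the $\sfZ \subset \sfY$ form described at the end of Subsection \ref{Bi-exactness and condition (AO)} gives that $\Gamma \act^\pi \sfZ$ is bi-exact$/\cG_\anti$. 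The only real obstacle is the bookkeeping of the zeros and signs of $C_{x,z}$ in the commutativity step; but because these constants are \emph{bounded} (unlike the symmetric case, whose $\sqrt{n+1}$ growth forced the passage to $e^{\ri W(x)}$ in Lemma \ref{lem-equiv-comm-sym2}), no exponential or Duhamel-type argument is needed, and the whole statement reduces cleanly to the already-established symmetric estimates.
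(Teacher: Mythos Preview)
Your proposal is correct and follows essentially the same approach as the paper: restrict $\mu_\sym$ to $\sfZ\rtimes\Gamma$, deduce equivariance directly from Lemma \ref{lem-equiv-comm-sym}(1), and handle commutativity by noting that $|C_{x,z}|\le 1$ with $C_{x,z}=0$ exactly when $xz\notin\sfZ$, so the estimate reduces to the one already proved in Lemma \ref{lem-equiv-comm-full}(2). The paper's proof of Lemma \ref{lem-equiv-comm-anti} is precisely this argument, and the conclusion via the $\Gamma$-equivariant ucp map and Lemma \ref{bi-exact-lem} (in its $\sfZ\subset\sfY$ form) is identical.
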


Since $\sfZ\rtimes \Gamma=\sfZ\times \Gamma\subset \sfY\rtimes \Gamma$, by restriction, we can define
\begin{align*}
	\mu_\anti=\mu_\sym|_{\sfZ\rtimes \Gamma}\colon \sfZ\rtimes \Gamma \to \Prob(X);\quad 
	 \mu_\anti^* \colon \ell^\infty (X) \to \ell^\infty (\sfZ\rtimes \Gamma).
\end{align*}
To prove Theorem \ref{prop-amenable-anti}, as in the proof of Theorem \ref{prop-amenable-full}, we have only to show the following lemma.

\begin{lem}\label{lem-equiv-comm-anti}
	The following conditions hold true.
\begin{enumerate}
	\item (Equivariance) 
For any $g,h \in \Gamma$ and $\varphi \in \ell^\infty (X)$,
	$$\mu_\anti^* ( g\cdot \varphi) - \mu_\anti^*( \varphi )(g^{-1}\, \cdot \, h) \in c_0(\cG_\anti).$$

	\item (Commutativity)
For any $x,y\in X$ and $\varphi\in \ell^\infty(X)$,
\begin{align*}
	&\pi_\ell(C_{x,\cdot})(\mu_\anti^* ( \varphi ) - \mu_\anti^* (\varphi) ( x \,\cdot\, ))\in c_0 (\cG_\anti);\\
	&\pi_r(C^r_{y,\cdot})(\mu_\anti^* ( \varphi ) - \mu_\anti^* (\varphi) ( \,\cdot\, y))\in c_0 (\cG_\anti).
\end{align*}
\end{enumerate}
In particular, $\mu_\anti^*(\ell^\infty(X))\subset C(\overline{\cG_\anti})$ and we have a $\Gamma$-equivariant ucp map
	$$q\circ \mu^*\colon \ell^\infty(X) \to C(\partial\cG_\anti),$$
where $q\colon C(\overline{\cG_\anti})\to C(\partial \cG_\anti)$ is the quotient map. 
\end{lem}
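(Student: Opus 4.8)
The plan is to exploit that $\mu_\anti$ is by definition the restriction of $\mu_\sym$ to $\sfZ\rtimes\Gamma \subset \sfY\rtimes\Gamma$, so that both assertions can be transferred along this restriction from the symmetric statements in Lemma~\ref{lem-equiv-comm-sym}. The key device is Lemma~\ref{c0-lemma}(3), applied to $\sfZ\rtimes\Gamma \subset \sfY\rtimes\Gamma$ with $\cG_\anti$ the family of intersections $E\cap(\sfZ\rtimes\Gamma)$, $E\in\cG_\sym$: it yields $c_0(\cG_\anti) = c_0(\cG_\sym)\cap\ell^\infty(\sfZ\rtimes\Gamma)$ together with the equivalence $z\to\infty/\cG_\anti \Leftrightarrow z\to\infty/\cG_\sym$ for nets in $\sfZ\rtimes\Gamma$. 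Hence it suffices, at each $z\in\sfZ\rtimes\Gamma$, to bound the relevant difference and show that it tends to $0$ as $z\to\infty/\cG_\sym$.

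For the equivariance I would first note that $\pi$ and $I$ preserve $\sfZ$, so the translations $z\mapsto g^{-1}zh$ carry $\sfZ\rtimes\Gamma$ into itself. Since $\mu_\anti^*(\varphi)$ and $\mu_\sym^*(\varphi)$ agree on $\sfZ\rtimes\Gamma$, the function $\mu_\anti^*(g\cdot\varphi) - \mu_\anti^*(\varphi)(g^{-1}\,\cdot\,h)$ is exactly the restriction to $\sfZ\rtimes\Gamma$ of $\mu_\sym^*(g\cdot\varphi) - \mu_\sym^*(\varphi)(g^{-1}\,\cdot\,h)$, which lies in $c_0(\cG_\sym)$ by Lemma~\ref{lem-equiv-comm-sym}(1). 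Restricting and invoking Lemma~\ref{c0-lemma}(3) places it in $c_0(\cG_\anti)$.

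For the commutativity I would evaluate $\pi_\ell(C_{x,\cdot})(\mu_\anti^*(\varphi)-\mu_\anti^*(\varphi)(x\,\cdot\,))$ at $z\in\sfZ\rtimes\Gamma$, getting $C_{x,z}\bigl(\mu_\anti^*(\varphi)(z)-\mu_\anti^*(\varphi)(xz)\bigr)$. When $C_{x,z}=0$ the term vanishes, so only $C_{x,z}\neq0$ matters; there $xz\in\sfZ$ and the expression equals $C_{x,z}\bigl(\mu_\sym^*(\varphi)(z)-\mu_\sym^*(\varphi)(xz)\bigr)$. The decisive input from Lemma~\ref{const-anti-lem} is $|C_{x,z}|\leq1$, so this is dominated by $|\mu_\sym^*(\varphi)(z)-\mu_\sym^*(\varphi)(xz)|\leq\|\varphi\|_\infty\|\mu_\sym(z)-\mu_\sym(xz)\|_1$. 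In the symmetric case the corresponding constant satisfies $C^\sym_{x,z}\geq1$ by Lemma~\ref{const-sym-lem}, so Lemma~\ref{lem-equiv-comm-sym}(2) already forces $|\mu_\sym^*(\varphi)(z)-\mu_\sym^*(\varphi)(xz)|\to0$ as $z\to\infty/\cG_\sym$; the bound $|C_{x,z}|\le1$ makes the anti-symmetric estimate strictly easier, dispensing with the $\sqrt{|z|_0+1}$ growth factor that was unavoidable symmetrically. The right-creation statement follows identically, or by applying $\Ad(J)$ as in Lemma~\ref{lem-boundary-action}.

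The only point requiring care — and the mild obstacle — is the bookkeeping at points where $C_{x,z}=0$: there $xz$ leaves $\sfZ$ and $\mu_\anti^*(\varphi)(xz)$ is not literally defined, but multiplication by $C_{x,z}=0$ annihilates the term, so the whole function is well defined on $\sfZ\rtimes\Gamma$ and the domination above is valid pointwise. Finally, the ``in particular'' clause is formal: taking $g=e$ in the equivariance gives $\mu_\anti^*(\varphi)-\mu_\anti^*(\varphi)(\,\cdot\,h)\in c_0(\cG_\anti)$, while the commutativity places $\mu_\anti^*(\varphi)$ in $\ell^\infty(\overline{\cG_\anti})$; together these show $\mu_\anti^*(\ell^\infty(X))\subset C(\overline{\cG_\anti})$, and the equivariance descends to $\Gamma$-equivariance of $q\circ\mu_\anti^*$ on the quotient.
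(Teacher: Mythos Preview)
Your proposal is correct and follows essentially the same route as the paper: both deduce equivariance by restricting the symmetric statement (Lemma~\ref{lem-equiv-comm-sym}(1)) to $\sfZ\rtimes\Gamma$, and both handle commutativity by using $|C_{x,z}|\le 1$ from Lemma~\ref{const-anti-lem} to reduce to the convergence $\|\mu(z)-\mu(xz)\|_1\to 0$. The only cosmetic difference is that you extract this convergence from the \emph{statement} of Lemma~\ref{lem-equiv-comm-sym}(2) via the lower bound $C^{\sym}_{x,z}\ge 1$, whereas the paper cites the \emph{proof} of Lemma~\ref{lem-equiv-comm-full}; your route is arguably cleaner since it avoids reopening earlier estimates.
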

\begin{proof}
	1. This is a special case of item 1 in Lemma \ref{lem-equiv-comm-sym}.

	2. We see only the case for $x\in X$. As in the proof of Lemma \ref{lem-equiv-comm-full}, we have for $z=(a,g)\in \sfZ\rtimes \Gamma$,
\begin{align*}
	&|\pi_\ell(C_{x,\cdot})(z)(\mu_\anti^*(\varphi)(z)-\mu_\anti^*(\varphi)(xz))|
	\leq  \|\varphi\|_\infty |C_{x,a}| \|\mu_\anti (z)-\mu_\anti (xz)\|_1.
\end{align*}
Hence we have only to show $\lim_{(\sfZ\rtimes \Gamma)_x \ni z\to \infty/\cG_\anti }\|\mu_\anti (xz) - \mu_\anti(z) \|_1=0$, where $ (\sfZ\rtimes \Gamma)_x$ is the set of all $z=(a,g)\in \sfZ\rtimes \Gamma$ such that $C_{x,a}\neq 0$ (which means $xz \in \sfZ\rtimes \Gamma$). As in the proof of Lemma \ref{lem-equiv-comm-sym}, this convergence follows by (the proof of) Lemma \ref{lem-equiv-comm-full}.
\end{proof}

\begin{rem}\label{independence of section}\upshape
	We have proved the bi-exactness of $\Gamma \act^\pi \sfZ$. We are interested in creation operators on $\cF_\anti$ and we will use the bi-exactness via the unitary $\cF_\anti \simeq \ell^2(\sfZ)$, which is given by a fixed section in Subsection \ref{Fock spaces as semigroups}. In this respect, it is worth mentioning that the associated C$^*$-algebras on $\cF_\anti\otimes \ell^2(\Gamma)$ do not depend on the choice of the section. Indeed, it is easy to see that the inclusion $\ell^\infty(\sfZ)\subset \B(\cF_\anti)$ does not depend on the section  (actually this coincides with the one given in \cite[Proposition 3.1]{HIK20}). Hence positions of
	$$c_0(\cG_\anti)\subset \ell^\infty(\sfZ\rtimes \Gamma),\quad \K(\cG_\anti)$$
in $\B(\cF_\anti \otimes \ell^2(\Gamma))$ are unique. Also $U_g^\pi$ for $g\in \Gamma$, $\ell(x),r(x)$ for $x\in X$, and $\cC_\ell,\cC_r$ are defined originally on $\cF_\anti$, hence they do not depend on the section. Finally the operators $\ell([x]),r([x])$ for $x\in \sfZ$ \textit{depend} on the section, but the dependence is only its sign, hence the algebra $C(\overline{\cG_\anti})$ does not depend on the section. Thus we can regard $c_0(\cG_\anti)\subset C(\overline{\cG_\anti})$ as subalgebras in $\B(\cF_\anti \otimes \ell^2(\Gamma))$, and we have the amenability of the action on the quotient algebra $C(\partial \cG_\anti)$. 

We finally note that the boundary constructed in \cite[Subsection 3.2]{HIK20} is different for ours. In \cite{HIK20}, we considered a quotient by $\cK$, which \textit{contains} our compact $\K(\cG_\anti)$. In this sense, our result is better and the associated condition (AO) is a stronger result since it uses the quotient algebra by a smaller compact operators. We note that the boundary in \cite{HIK20} is inspired by the construction in \cite{Oz04}, while ours is by the one in \cite{Oz08}.
\end{rem}

\section{Other examples}
\label{Other examples}

\subsection{Free products}

To study free wreath product groups, we first see infinite free product groups. The proof is a straightforward adaptation of known techniques.

\begin{prop}\label{prop-infinite-free-product}
	Let $I$ be a countable set and $\Gamma_i$ for $i\in I$ countable discrete groups. We denote by $\Gamma= \ast_{i\in I}\Gamma_i $ the free product group. Then $\Gamma$ is bi-exact if and only if so are all $\Gamma_i$'s.
\end{prop}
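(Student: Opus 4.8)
The plan is to prove the two implications separately, the forward one being soft and the converse being the substantial one.

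\medskip
\noindent\emph{Easy direction.} Suppose $\Gamma$ is bi-exact; I would deduce bi-exactness of each factor by showing bi-exactness passes to subgroups. Each $\Gamma_i\leq\Gamma$ is a $\Gamma_i\times\Gamma_i$-invariant subset of $\Gamma$ for the left--right translation action, and restriction of functions $\ell^\infty(\Gamma)\to\ell^\infty(\Gamma_i)$, $f\mapsto f|_{\Gamma_i}$, is a unital surjective $\Gamma_i\times\Gamma_i$-equivariant $\ast$-homomorphism sending $c_0(\Gamma)$ into $c_0(\Gamma_i)$. Hence it descends to a surjective equivariant $\ast$-homomorphism $\ell^\infty(\Gamma)/c_0(\Gamma)\to\ell^\infty(\Gamma_i)/c_0(\Gamma_i)$. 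Since amenability of an action is inherited both by restriction of the acting group to a subgroup and by equivariant quotients, amenability of $\Gamma\times\Gamma\act\ell^\infty(\Gamma)/c_0(\Gamma)$ forces amenability of $\Gamma_i\times\Gamma_i\act\ell^\infty(\Gamma_i)/c_0(\Gamma_i)$, i.e.\ $\Gamma_i$ is bi-exact.

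\medskip
\noindent\emph{Hard direction, first step.} Assume every $\Gamma_i$ is bi-exact; in particular each is exact, so $\Gamma=\ast_{i}\Gamma_i$ is exact. Let $\cG$ be the family of subsets of $\Gamma$ generated by $\{\Gamma_i\}_{i\in I}$ in the sense of Subsection \ref{Bi-exactness for groups} (finite unions of sets $s\Gamma_i t$). My first step is to show that $\Gamma$ is bi-exact relative to $\cG$, i.e.\ $\Gamma\times\Gamma\act\ell^\infty(\Gamma)/c_0(\cG)$ is amenable. This is the geometric input: $\Gamma$ acts on its Bass--Serre tree with trivial edge stabilizers and vertex stabilizers the conjugates of the $\Gamma_i$, and a tree is hyperbolic, so collapsing the vertex stabilizers yields an amenable left--right action on the small-at-infinity corona; this is precisely the relative bi-exactness afforded by actions on trees \cite[Section 15.3]{BO08}. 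As each element of $\Gamma$ involves only finitely many factors, the construction is insensitive to $I$ being (countably) infinite, so this step goes through verbatim.

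\medskip
\noindent\emph{Hard direction, gluing.} It remains to upgrade bi-exactness relative to $\cG$ to absolute bi-exactness using that the factors are themselves bi-exact. I would use the $\Gamma\times\Gamma$-equivariant extension
\[
0 \to c_0(\cG)/c_0(\Gamma) \to \ell^\infty(\Gamma)/c_0(\Gamma) \to \ell^\infty(\Gamma)/c_0(\cG) \to 0 .
\]
Since $\Gamma$, and hence $\Gamma\times\Gamma$, is exact, applying the reduced crossed product gives an exact sequence of C$^*$-algebras, and nuclearity is closed under extensions (in the spirit of Proposition \ref{prop-nuclear}(3)); so it suffices to prove that the crossed products of the ideal and of the quotient are nuclear. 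The quotient is handled by the relative bi-exactness of the first step (amenable action $\Rightarrow$ nuclear reduced crossed product). For the ideal, the point is that $c_0(\cG)/c_0(\Gamma)$ decomposes $\Gamma\times\Gamma$-equivariantly as a sum over cosets of copies of the individual boundaries $\ell^\infty(\Gamma_i)/c_0(\Gamma_i)$, on which the relevant stabilizers reduce to copies of $\Gamma_i\times\Gamma_i$; here bi-exactness of each $\Gamma_i$ supplies amenability fibrewise. Combining the two, $\Gamma\times\Gamma\act\ell^\infty(\Gamma)/c_0(\Gamma)$ is amenable, i.e.\ $\Gamma$ is bi-exact.

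\medskip
\noindent I expect the gluing step to be the main obstacle. What makes the fibrewise amenability assemble cleanly is the malnormal structure of a free product: a nontrivial conjugate $s\Gamma_i s^{-1}$ meets $\Gamma_j$ only trivially unless $i=j$ and $s\in\Gamma_i$, so distinct cosets do not interfere. The delicate bookkeeping is to identify $c_0(\cG)/c_0(\Gamma)$ precisely as this induced/bundle object over the cosets (using the free-product normal form) and to verify the hypotheses of the extension-permanence argument, notably an approximate unit of the ideal that remains an approximate unit in the larger algebra, exactly as required in Proposition \ref{prop-nuclear}(3). The infinite index set $I$ enters only mildly, through allowing countably many mutually malnormal families of cosets, which is the \emph{adaptation} of the standard finite-factor argument alluded to above.
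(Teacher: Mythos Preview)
Your outline is correct but follows a genuinely different route from the paper. The paper argues directly via Ozawa's Kurosh-type technique \cite{Oz04}: for each $i$ it forms the nuclear C$^*$-algebra $\cA_i:=\mathrm{C}^*\{C(\overline{\Gamma}_i),C_\lambda^*(\Gamma_i)\}\simeq C(\overline{\Gamma}_i)\rtimes\Gamma_i\subset\B(\ell^2(\Gamma_i))$, which contains the compacts; by \cite[Lemma~2.4]{Oz04} the reduced free product $\cA=\ast_{i\in I}\cA_i$ is nuclear (finite subproducts first, then the direct limit over $I$), and one checks $[\cA,J\cA J]\subset\K(\ell^2(\Gamma))$. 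The multiplication map $\cA\otm J\cA J\to\B(\ell^2(\Gamma))/\K(\ell^2(\Gamma))$ is then well defined and nuclear, so its restriction to $C_\lambda^*(\Gamma)\otm C_\rho^*(\Gamma)$ is nuclear, and bi-exactness follows from the lifting theorem together with \cite[Lemma~15.1.4]{BO08}. There is no relative bi-exactness step and no extension argument at all.

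Your route---relative bi-exactness from the Bass--Serre tree, then gluing via the extension $0\to c_0(\cG)/c_0(\Gamma)\to\ell^\infty(\Gamma)/c_0(\Gamma)\to\ell^\infty(\Gamma)/c_0(\cG)\to0$---is also viable and is closer in spirit to Lemma~\ref{intersection-AO-lem}. Malnormality of the factors indeed gives, modulo $c_0(\Gamma)$, the orthogonal decomposition of $c_0(\cG)/c_0(\Gamma)$ over cosets that you describe, and an imprimitivity argument reduces the crossed product of the ideal to the nuclear blocks $[\ell^\infty(\Gamma_i)/c_0(\Gamma_i)]\rtimes(\Gamma_i\times\Gamma_i)$. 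The paper's approach buys brevity: all the factor bi-exactness is absorbed into the single free-product nuclearity lemma, and no coset bookkeeping or extension-permanence argument is needed. Yours separates the tree geometry from the bi-exactness of the pieces, which is more modular and would adapt more transparently to amalgamated situations, at the cost of precisely the bookkeeping you flag.
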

\begin{proof}
	The only if part is trivial, so we assume $\Gamma_i$ is bi-exact for all $i\in I$. For each $i\in I$, put 
	$$C(\overline{\Gamma}_i) = \{f\in \ell^\infty(\Gamma_i)\mid f(\,\cdot \, h) - f\in c_0(\Gamma_i) \}$$
and then $\Gamma_i \act C(\overline{\Gamma}_i)$ is amenable. In particular $\cA_i:= \mathrm{C}^*\{C(\overline{\Gamma}_i), C_\lambda^*(\Gamma_i)\}\simeq C(\overline{\Gamma}_i)\rtimes \Gamma $ is nuclear and contains all compact operators. By the proof of \cite[Lemma 2.4]{Oz04}, $\ast_{i\in F}\cA_i$ is nuclear for all finite subsets $F\subset I$, hence $\cA:=\ast_{i\in I}\cA_i$ is nuclear. Then define the following $\ast$-homomorphism
	$$ \nu\colon \cA \otm J\cA J \to \B(\ell^2(\Gamma))/\K(\ell^2(\Gamma));\quad x\otimes JyJ\mapsto xJyJ + \K(\ell^2(\Gamma)).$$
(It is well defined, since $[\cA,J\cA J]\subset \K(\ell^2(\Gamma))$.) Since $\cA \otm J\cA J$ is nuclear, the restriction of $\nu$ on $C_\lambda^*(\Gamma)\otm C_\rho(\Gamma)$ is nuclear. By the lifting theorem, $\nu$ has a ucp lift, hence $\Gamma$ is bi-exact by \cite[Lemma 15.1.4]{BO08}.
\end{proof}

\subsection{Free wreath products}

Let $\Gamma,\Delta$ be countable discrete groups and $\Gamma \act \cI$ an action on a countable set. Define
\begin{align*}
	\text{wreath product}\quad &\Delta\wr_{\cI} \Gamma := \Delta_{\cI} \rtimes \Gamma, \quad \text{where }\Delta_{\cI}:=\bigoplus_{\cI} \Delta;\\
	\text{free wreath product}\quad &\Delta\wr_{\cI}^{\free} \Gamma := \Delta_{\cI}^\free \rtimes \Gamma, \quad \text{where }\Delta_{\cI}^\free := \ast_{\cI} \Delta.
\end{align*}
Here both $\Gamma$-actions are given by coordinate shifts along $\Gamma \act \cI$. In this subsection, we study some boundary amenability for $\Gamma\act \Delta_\cI^\free$ given in Theorem \ref{prop-biexact-free}. We will then obtain the following characterization of bi-exactness for free wreath product groups. The proof uses also Proposition \ref{bi-exact-Fock} which will be proved in Subsection \ref{Condition (AO) and rigidity}\label{Condition (AO) and rigidity}.

\begin{thm}
	Assume that $\Gamma \act \cI$ has finite stabilizers and finitely many orbits. Then $\Delta \wr^\free_{\cI} \Gamma$ is bi-exact if and only if so are $\Delta$ and $\Gamma$.
\end{thm}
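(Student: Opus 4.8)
The plan is to split the equivalence into its two implications: the forward one is handled by heredity of bi-exactness under passage to subgroups, and the converse by assembling the boundary-amenability results of this section into genuine bi-exactness. For the \emph{only if} direction, observe that both $\Delta$ and $\Gamma$ sit inside $G:=\Delta\wr_{\cI}^{\free}\Gamma=\Delta_{\cI}^{\free}\rtimes\Gamma$ as honest subgroups: fixing any $i_0\in\cI$, the free factor $\Delta_{i_0}$ is a copy of $\Delta$ inside $\Delta_{\cI}^{\free}$, and $\Gamma$ is the complementary subgroup of the semidirect product. Since bi-exactness is inherited by subgroups (see \cite[Section 15]{BO08}), bi-exactness of $G$ forces that of $\Delta$ and of $\Gamma$, using none of the hypotheses on $\Gamma\act\cI$.

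For the \emph{if} direction I would assume $\Delta$ and $\Gamma$ bi-exact (hence exact) and first produce a \emph{relative} bi-exactness for $G$. This is exactly the content of Theorem \ref{prop-biexact-free}: it supplies boundary amenability for the coordinate-shift action $\Gamma\act\Delta_{\cI}^{\free}$, i.e.\ $G$ is bi-exact relative to the family $\cG$ attached to $\Delta_{\cI}^{\free}\rtimes\Gamma$. Here the hypotheses that $\Gamma\act\cI$ has finite stabilizers and finitely many orbits, together with exactness of $\Gamma$, are what make the associated $\Gamma$-action on $C(\partial\cG)$ amenable, in the spirit of Theorem \ref{prop-amenable-full}; the free-factor directions are controlled by realizing $\ell^2(\Delta_{\cI}^{\free})$ through its full (free-product) Fock-space structure and invoking Proposition \ref{bi-exact-Fock} (and Proposition \ref{prop-infinite-free-product}, which already gives that $\Delta_{\cI}^{\free}=\ast_{\cI}\Delta$ is bi-exact). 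I would organize the combination of the $\Gamma$-shift family and the Fock/free-factor family through the intersection Lemma \ref{intersection-AO-lem}, whose hypotheses (min-boundedness and nuclearity of the two partial maps $\nu_\cG,\nu_\cF$, boundedness of $\pi_\ell\otimes\pi_r$, bi-exactness of $\Gamma$ relative to the coarser family) are matched precisely to the ingredients above.

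It is convenient to phrase the relativity in terms of the subgroups $\{\Gamma\}\cup\{\Delta_i\rtimes\Gamma_i\}$ with $\Gamma_i:=\mathrm{Stab}_\Gamma(i)$. To finish I would remove this relativity by checking that each relativizing subgroup is itself bi-exact: $\Gamma$ is bi-exact by hypothesis, while $\Gamma_i$ is finite (finite stabilizers), so $\Delta_i\rtimes\Gamma_i$ is bi-exact precisely because $\Delta$ is. By the standard transitivity of relative bi-exactness — the same mechanism that upgrades the classical wreath-product examples at the start of this section from relative to genuine bi-exactness (\cite[Section 15]{BO08}) — it then follows that $G=\Delta\wr_{\cI}^{\free}\Gamma$ is bi-exact.

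I expect the single genuine obstacle to be the relative bi-exactness step (Theorem \ref{prop-biexact-free}) rather than the bookkeeping around it: because $\Delta_{\cI}^{\free}$ is in general highly non-amenable, unlike the abelian-by-amenable fibre $\bigoplus_\Gamma\Lambda$ of the ordinary wreath product, one cannot simply transcribe Ozawa's argument. The work lies in encoding the reduced-word (admissibility) constraint of the free product into the full Fock space so that the coordinate shift becomes an action of the type treated in Section \ref{Bi-exactness of creation operators on Fock spaces}, and in aligning the two relativity families so that their intersection collapses to exactly the compacts needed when the intersection lemma is applied; keeping track of which hypotheses are consumed as mere exactness (for the relative statement) versus full bi-exactness (for the composition) is what makes the final equivalence come out sharp.
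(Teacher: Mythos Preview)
Your overall architecture --- invoke Theorem \ref{prop-biexact-free} for the boundary amenability of $\Gamma\act\Delta_\cI^\free$ and then combine with bi-exactness of $\Gamma$ via the intersection Lemma \ref{intersection-AO-lem} (as packaged in Proposition \ref{bi-exact-Fock}) --- is exactly the paper's route. The \emph{only if} direction is handled identically.

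The divergence, and the genuine gap, is in your endgame. After the intersection lemma (with $\cF$ built from the family $\mathcal E$ of finite subsets of $\Gamma$), one has $\K(\cG_\free\cap\cF)=\K(\ell^2(\Delta_\cI^\free\rtimes\Gamma))$: the output of Proposition \ref{bi-exact-Fock} is already \emph{genuine} condition (AO) with a ucp lift for $C^*_\lambda(\Delta\wr_\cI^\free\Gamma)$, not a residual relative statement. The paper then simply quotes \cite[Lemma 15.1.4]{BO08} to convert condition (AO) into bi-exactness of the group. There is no remaining relativity to remove.

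Your proposed detour --- rephrase the relativity in terms of the subgroups $\{\Gamma\}\cup\{\Delta_i\rtimes\Gamma_i\}$ and then invoke a transitivity principle from \cite[Section 15]{BO08} --- does not go through as written. The paper explicitly observes that $\cG_\free$ is \emph{not} generated by subgroups of $\Delta_\cI^\free\rtimes\Gamma$ (the constraint ``each letter $y_{i_k}$ lies in a fixed finite $E\subset\Delta$'' in the sets $A(E,F,n)$ is not a coset condition). Hence bi-exactness relative to $\cG_\free$ is not the same as, and does not obviously imply, bi-exactness relative to the subgroup family you name; the transitivity machinery in \cite{BO08} is formulated for subgroup-generated families and cannot be invoked directly. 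You would need a separate argument comparing $c_0(\cG_\free)$ with $c_0(\Gamma;\{\Gamma,\Delta_i\rtimes\Gamma_i\})$, which you have not supplied and which is precisely what the paper avoids by passing through condition (AO) instead.

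In short: drop the subgroup/transitivity paragraph, note that Proposition \ref{bi-exact-Fock} already yields honest condition (AO) once $\Gamma$ is bi-exact, and finish with \cite[Lemma 15.1.4]{BO08}. That is the paper's proof.
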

\begin{proof}
	The only if part is trivial. By Theorem \ref{prop-biexact-free} and Proposition \ref{bi-exact-Fock}, if $\Gamma,\Delta$ are bi-exact, $L(\Delta\wr^\free_\cI\Gamma)$ with its group C$^*$-algebra satisfies condition (AO) with a ucp lift. By \cite[Lemma 15.1.4]{BO08}, $\Delta\wr^\free_\cI\Gamma$ is bi-exact.
\end{proof}

This theorem should be compared to the following Ozawa's theorem. 

\begin{thm}[{\cite{Oz04,BO08}}]
	Assume that $\Gamma \act \cI$ has finite stabilizers and finitely many orbits. Then $\Delta\wr_{\cI} \Gamma$ is bi-exact if and only if $\Delta$ is amenable and $\Gamma$ is bi-exact.
\end{thm}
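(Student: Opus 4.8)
The plan is to prove the two implications separately, obtaining the substantial \emph{if} direction from the machinery of Section~\ref{Bi-exactness for groups acting on semigroups}; this is precisely how Ozawa's argument \cite{Oz04,BO08} fits into the present framework. First I would fit the wreath product into Assumption~\ref{assumption}. Put $S:=\Delta_\cI=\bigoplus_\cI\Delta$, a group and hence a cancellative involutive semigroup with $I$ the inversion, on which $\Gamma$ acts by shifting coordinates along $\Gamma\act\cI$. I take the left and right regular representations $\lambda,\rho$ of $\Delta_\cI$ for the creation operators $\ell,r$, the standard conjugation of $L(\Delta_\cI)$ for $J_S$, the shift unitaries for $U^\pi$, and $\cG:=\{\bigcup_{\rm finite}s\Gamma t\mid s,t\in\Delta\wr_\cI\Gamma\}$; as noted after Assumption~\ref{assumption} the regular representations play the role of creations, so all conditions hold. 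Here $\cC=\cC_r=C^*_\lambda(\Delta_\cI)$ and I set $B:=C^*_\lambda(\Delta_\cI)$.

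For the \emph{if} direction, assume $\Delta$ amenable and $\Gamma$ bi-exact, so in particular $\Gamma$ is exact. Since $\Gamma\act\cI$ has finite stabilizers and finitely many orbits, $\Gamma\act\ell^\infty(\cI)$ is amenable, exactly as in the proof of Theorem~\ref{prop-amenable-full}. The weight $\omega$ and probability measure $\mu$ used there have an evident analogue: to $(s,g)\in\Delta\wr_\cI\Gamma$, with $s$ supported on the finite set $\supp(s)\subset\cI$, I assign a measure on $\cI$ weighted by the length data of $\supp(s)$ (via $|\cdot|_\cI$ and $|\cdot|_\Gamma$ from Lemma~\ref{length function lemma}). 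This produces a $\Gamma$-equivariant ucp map $\ell^\infty(\cI)\to C(\overline{\cG})$ whose defects under left and right translation lie in $c_0(\cG)$; composing with the quotient gives a $\Gamma$-equivariant ucp map $\ell^\infty(\cI)\to C(\partial\cG)$. Amenability of $\Gamma\act\ell^\infty(\cI)$ then forces $\Gamma\act C(\partial\cG)$ to be amenable, and Lemma~\ref{bi-exact-lem} yields that $\Gamma\act\Delta_\cI$ is bi-exact$/\cG$.

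Next I would produce condition (AO). As $\Delta$ is amenable, $\Delta_\cI$ is amenable and $B=C^*_\lambda(\Delta_\cI)$ is nuclear, so $\nu_\cG$ is min-bounded and nuclear; Proposition~\ref{bi-exact-prop} then gives condition (AO) relative to $\K(\cG)$ with a ucp lift. To reach genuine bi-exactness I invoke the bi-exactness of $\Gamma$: take $\mathcal{E}$ to be the finite subsets of $\Gamma$ and $\cF:=\{SE\mid E\in\mathcal{E}\}$, so that $\Gamma$ is bi-exact$/\mathcal{E}$. One checks that $\cG\cap\cF$ is cofinal in the finite subsets of $\Delta\wr_\cI\Gamma$, whence $\K(\cG\cap\cF)=\K(\ell^2(\Delta\wr_\cI\Gamma))$. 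With nuclearity of $B$ giving min-boundedness of $\pi_\ell\otimes\pi_r$, all hypotheses of Lemma~\ref{intersection-AO-lem} are in place, and it yields condition (AO) for $\Delta\wr_\cI\Gamma$ relative to the genuine compacts, with a ucp lift. By \cite[Lemma 15.1.4]{BO08} this is equivalent to bi-exactness of $\Delta\wr_\cI\Gamma$.

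For the \emph{only if} direction, $\Gamma$ embeds as a subgroup of $\Delta\wr_\cI\Gamma$ and bi-exactness is inherited by subgroups (see \cite[\S15]{BO08}), so $\Gamma$ is bi-exact. If $\Delta$ were non-amenable, then for two distinct $i,j\in\cI$ the coordinate copies $\Delta_i,\Delta_j\le\Delta_\cI$ commute and are each non-amenable, giving a copy of $\Delta\times\Delta$ with both factors non-amenable inside a bi-exact group, which is impossible since bi-exact groups contain no direct product of two non-amenable subgroups \cite{Oz04,BO08}; hence $\Delta$ is amenable. I expect the main obstacle to be the boundary amenability step in the \emph{if} direction: verifying that the translation defects of the measure-valued map vanish at infinity relative to $\cG$, which rests on transporting the length estimates of Lemmas~\ref{length function lemma} and~\ref{length-convergence-lem} to the group setting. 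Once that is in hand, the passage through Proposition~\ref{bi-exact-prop} and Lemma~\ref{intersection-AO-lem} is formal.
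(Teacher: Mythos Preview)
The paper does not supply its own proof of this statement: it is quoted as Ozawa's theorem \cite{Oz04,BO08} for comparison with the free wreath product result, so there is no in-paper argument to compare against. Your proposal is correct and is exactly the route the paper \emph{indicates}: Subsection~\ref{Examples from groups} explains that the wreath product $\Delta_\cI\rtimes\Gamma$ fits Assumption~\ref{assumption} with $S=\Delta_\cI$, regular representations as creations, and $\cG=\{\bigcup_{\rm finite}s\Gamma t\}$, after which Proposition~\ref{bi-exact-prop} and Lemma~\ref{intersection-AO-lem} deliver condition~(AO) relative to the genuine compacts once one knows $\Gamma\act C(\partial\cG)$ is amenable. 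The latter is precisely the content of \cite[Proposition~15.3.6]{BO08}, and your sketch of the measure $\mu$ on $\cI$ built from $\supp(s)$ and the length functions of Lemma~\ref{length function lemma} is the right construction; the verification of the $c_0(\cG)$-defects you flag as the main obstacle is carried out there.

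Two minor remarks. First, your \emph{only if} argument needs $|\cI|\ge 2$ to produce two commuting copies of $\Delta$; under the hypotheses this fails only when $\Gamma$ is finite, a degenerate case one may exclude or handle separately. Second, your check that $\K(\cG\cap\cF)=\K(\ell^2(\Delta\wr_\cI\Gamma))$ amounts to noting that $s\Gamma t\cap(\Delta_\cI\times E)$ is finite for each finite $E\subset\Gamma$, which is immediate.
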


We start the proof of Theorem \ref{prop-biexact-free} below. We denote by $\Delta_i$ the $i$-th copy of $\Delta$ in $\Delta_\cI^\free$. Any non trivial $y\in \Delta_\cI^\free$ has a unique word decomposition
	$$ y= y_{i_1}\cdots y_{i_n},\quad y_{i_k}\in \Delta_{i_k},\quad  i_1\neq i_2,\ldots,i_{n-1}\neq i_n .$$
Set $\supp(y):=\{i_1,\ldots,i_n\}\subset I$. 
For any finite subsets $E\subset \Delta$, $F\subset \cI$ and $n\in \N$, we define $A(E,F,n)\subset \Delta_\cI^\free\rtimes  \Gamma$ in the following way. For any $y\in \Delta_\cI^\free$ and $g\in \Gamma$, $(y,g)$ is in $A(E,F,n)$ if $y=e$ or the word decomposition $y=y_{i_1}\cdots y_{i_m}$ satisfies
	$${\rm (a)} \  m\leq n\qquad {\rm (b)} \ y_{i_k}\in E \text{ for all } i_k \qquad {\rm (c)} \ \mathrm{supp}(y)\subset F\cup gF.$$
It is then straightforward to check the following conditions: for  any finite subsets $E,E'\subset \Delta$, $F,F'\subset \cI$, $n,m\in \N$, $x\in \Delta_j$, and $g\in \Gamma$,
\begin{itemize}
	\item $A(E,F,n)\cup A(E',F',m) \subset A(E\cup E',F\cup F',\max\{n,m\})$;

	\item $A(E,F,n)^{-1} =A(E^{-1},F,n)$;

	\item $x A(E,F,n)\subset A(E\cup\{x\}\cup x E,F \cup \{j\},n+1)$,

$  A(E,F,n)x \subset A(E\cup\{x\}\cup E x,F \cup \{j\},n+1)$;

	\item $g A(E,F,n) \subset A(E,F \cup g^{-1}F,n),\quad  A(E,F,n)g \subset A(E,F \cup gF,n)$.

\end{itemize}
Hence if we define
	$$ \cG_\free:=\{ A\subset \Delta_\cI^\free \rtimes \Gamma \mid A\subset A(E,F,n) \text{ for some finite subsets }E,F \text{ and }n\in \N\}, $$
then it is globally preserved by finite unions, group inverse, and left-right translations. We can consider the relativity condition in $\ell^\infty(\Delta_\cI^\free \rtimes \Gamma)$ given by $\cG_\free$. Note that $\cG_\free$ is not generated by subgroups in $\Delta_\cI^\free \rtimes \Gamma$, hence it is not covered by Ozawa's argument in \cite{BO08}. Define
\begin{align*}
	 C(\overline{\cG_\free}):=\{f\in \ell^\infty(\Delta_{\cI}^\free\rtimes \Gamma) \mid f(x \, \cdot h) - f\in c_0(\cG_\free)\quad \text{for all }x\in \Delta_\cI^\free,\ h\in \Delta_\cI^\free \rtimes\Gamma\}
\end{align*}
and $C(\partial \cG_\free):=C(\overline{\cG_\free})/c_0(\cG_\free)$. It admits the $\Gamma$-action arising from the left translation.

Our goal in this subsection is to show the following theorem.

\begin{thm}\label{prop-biexact-free}
	Assume that $\Gamma$ and exact and $\Gamma \act \cI$ has finite stabilizers and finitely many orbits. Then the $\Gamma$-action $\Gamma \act C(\partial \cG_\free)$ is amenable. In particular $\Gamma \act \Delta_\cI^\free$ is bi-exact$/\cG_\free$.
\end{thm}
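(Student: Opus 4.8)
The plan is to mimic the proof of Theorem~\ref{prop-amenable-full}: produce a $\Gamma$-equivariant u.c.p.\ map $\ell^\infty(\cI)\to C(\partial\cG_\free)$ and then invoke amenability of $\Gamma\act\ell^\infty(\cI)$, which holds because $\Gamma$ is exact and $\Gamma\act\cI$ has finite stabilizers and finitely many orbits. Since the free wreath product is a group, it fits Assumption~\ref{assumption} with creation operators given by the regular representation of $\Delta_\cI^\free$, so once the map is built the bi-exactness statement follows from Lemma~\ref{bi-exact-lem}.

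To build the map I would fix the length function $|\cdot|_\cI$ on $\cI$ from Lemma~\ref{length function lemma} (applied to $\Gamma\act\cI$) together with the symmetric length $|\cdot|_\Gamma$ on $\Gamma$, and, in addition, a proper length function $|\cdot|_\Delta$ on the countable group $\Delta$. For nontrivial $(y,g)$ with reduced word $y=y_{i_1}\cdots y_{i_n}$ I would set
\[
\omega(y,g):=\sum_{k=1}^n\Big(n+\min\{|i_k|_\cI,|\pi_g^{-1}(i_k)|_\cI\}+|y_{i_k}|_\Delta\Big)\,\delta_{i_k}\in\ell^1(\cI)^+,
\]
and $\mu(y,g):=\omega(y,g)/\|\omega(y,g)\|_1\in\Prob(\cI)$ (with $\mu(e,g)$ fixed arbitrarily), inducing the u.c.p.\ map $\mu^*\colon\ell^\infty(\cI)\to\ell^\infty(\Delta_\cI^\free\rtimes\Gamma)$, $\mu^*(\varphi)(y,g):=\langle\varphi,\mu(y,g)\rangle$. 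The decisive new feature compared with Theorem~\ref{prop-amenable-full} is the summand $|y_{i_k}|_\Delta$: an element may now run to infinity relative to $\cG_\free$ while keeping bounded reduced word length and bounded support in $\cI$, simply by letting a single letter escape to infinity inside one free factor $\Delta_{i_k}$, and this direction is invisible to a weight supported on $\cI$ alone. Crucially, since the $\Gamma$-action only permutes the free factors, $|\pi_g(y_{i_k})|_\Delta=|y_{i_k}|_\Delta$, so this summand is $\Gamma$-invariant and \emph{no exactness of $\Delta$ is ever used} — the letters enter purely as weights.

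The first step would be a convergence lemma in the spirit of Lemma~\ref{length-convergence-lem}: reading off the three ways of escaping every $A(E,F,n)$ (word length, maximal letter-depth, minimal index-depth) one checks directly from the definition of $\cG_\free$ that $\|\omega(z)\|_1\to\infty$ if and only if $z\to\infty/\cG_\free$. The second step would be the analogues of Lemma~\ref{lem-equiv-comm-full}. Writing $|z|_0$ for the reduced word length, for $g,h\in\Gamma$ one has $\|g\cdot\omega(z)-\omega(gzh)\|_1\le|z|_0(|g|_\Gamma+|h|_\Gamma)$, because left/right $\Gamma$-translation preserves word length and all the $\Delta$-depths, so only the index-depth terms move; and for left/right translation by $x,y\in\Delta_\cI^\free$ one gets a bound of the form $O(|z|_0)+O(|x|_\Delta+|y|_\Delta)$, where bounded cancellation or merging in the free product contributes only an additive constant (using $\big||xy_{i_1}|_\Delta-|y_{i_1}|_\Delta\big|\le|x|_\Delta$ for merging inside the same factor). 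Dividing these bounds by $\|\omega(z)\|_1\ge\max\{|z|_0^2,\sum_k|y_{i_k}|_\Delta\}$ and applying the convergence lemma yields $\mu^*(g\cdot\varphi)-\mu^*(\varphi)(g^{-1}\,\cdot\,h)\in c_0(\cG_\free)$ and $\mu^*(\varphi)-\mu^*(\varphi)(x\,\cdot\,y)\in c_0(\cG_\free)$. It is exactly the escape-by-letter direction, where $|z|_0$ and the index-depths stay bounded while $\sum_k|y_{i_k}|_\Delta\to\infty$, that forces the $|y_{i_k}|_\Delta$ summand: there the numerators remain bounded while the denominator blows up, whereas a weight on $\cI$ alone would give a nonzero constant and fail.

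These two facts show $\mu^*(\ell^\infty(\cI))\subset C(\overline{\cG_\free})$ (the right-$\Gamma$ case being $g=e$ of the first relation, and the left/right-$\Delta_\cI^\free$ cases being the second) and that $q\circ\mu^*$ is $\Gamma$-equivariant (the $h=e$ case). Amenability of $\Gamma\act\ell^\infty(\cI)$ then transfers through this equivariant u.c.p.\ map to give amenability of $\Gamma\act C(\partial\cG_\free)$ (cf.\ \cite[Exercise 15.2.2]{BO08}), and Lemma~\ref{bi-exact-lem} upgrades this to bi-exactness of $\Gamma\act\Delta_\cI^\free$ relative to $\cG_\free$. I expect the main obstacle to be bookkeeping: tracking precisely how reduced word length, support in $\cI$, and letter-depth each change under one-sided multiplication in the free product — in particular handling cancellation and merging so that the numerator bounds stay linear in $|z|_0$ with only additive constants — together with verifying that $\|\omega\|_1\to\infty$ captures $\to\infty/\cG_\free$ exactly. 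The conceptual crux, already isolated above, is that the arbitrariness of $\Delta$ is harmless precisely because $|\cdot|_\Delta$ is $\Gamma$-invariant and appears only as a weight.
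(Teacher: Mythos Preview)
Your approach is correct and follows the same architecture as the paper (build a $\Gamma$-equivariant u.c.p.\ map $\ell^\infty(\cI)\to C(\partial\cG_\free)$ via a weight $\omega$, then transfer amenability), but your choice of weight genuinely differs from the paper's. The paper sets
\[
\omega(y,g)=\sum_{i\in\supp(y)}\min\{|i|_\cI,|g^{-1}\cdot i|_\cI\}\,\delta_i+\sum_{k=1}^n|y_{i_k}|_\Delta\,\delta_{i_k},
\]
with \emph{no} word-length summand and with the index-depth summed over the support rather than over letters. This makes the commutativity estimate very clean---for $x\in\Delta_j$ one gets $\|\omega(xz)-\omega(z)\|_1\le\|\omega(x)\|_1$, a constant independent of $z$---but the equivariance estimate comes out as $\|g\cdot\omega(z)-\omega(gz)\|_1\le|g|_\Gamma|\supp(y)|$, and to control $|\supp(y)|/\|\omega(z)\|_1$ the paper proves a separate lemma (via a pigeonhole/contradiction argument) that $|\supp(y)|/|(y,g)|_\free\to 0$ as $z\to\infty/\cG_\free$. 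Your weight, by contrast, carries the word-length term $n$ exactly as in the full Fock case, so $\|\omega(z)\|_1\ge|z|_0^2$ and both the equivariance and commutativity numerators, which for you are $O(|z|_0)$, are killed directly without any auxiliary lemma. In short: the paper trades a slicker commutativity bound for an extra combinatorial lemma, while your version ports Lemma~\ref{lem-equiv-comm-full} more literally and sidesteps that lemma at the cost of slightly messier (but entirely manageable) bookkeeping when letters merge or cancel. Both routes reach the same destination.
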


From now on, assume that $\Gamma \act \cI$ has finite stabilizers and finitely many orbits. 
By Lemma \ref{length function lemma}, take proper length functions $|\,\cdot \, |_\Gamma$ ($=|\,\cdot \, |_{\Gamma/\Lambda})$ in the statement) and $|\,\cdot \, |_\cI$. Also take $|\cdot|_\Delta$ for $\Delta$. 
Then we define a length in $\Delta_\cI^\free\rtimes \Gamma$ by, for any $y\in \Delta_\cI^\free$ with decomposition $y=y_{i_1}\cdots y_{i_n}$ and $g\in \Gamma$, 
\begin{align*}
	 |(y,g)|_{\free}
	:= \sum_{i\in \mathrm{supp}(y)} \min\{|i|_{\cI}, |g^{-1}\cdot i|_{\cI}\} + \sum_{k=1}^n |y_{i_k}|_{\Delta} .
\end{align*}
Put $|(e,g)|_{\free}=0$. The next lemma is useful.

\begin{lem}
The following conditions hold true.
\begin{enumerate}
	\item For any net $(z_\lambda)_\lambda$ in $\Delta_{\cI}^\free\rtimes \Gamma$,
	$$z_\lambda \to \infty/\cG_\free \quad \Leftrightarrow \quad |z_\lambda|_{\free} \to \infty.$$

	\item $\lim_{(y,g)\to \infty/\cG_\free}\frac{|\mathrm{supp}(y)|}{|(y,g)|_{\free}} = 0$.

\end{enumerate}
\end{lem}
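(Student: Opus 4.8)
The plan is to deduce both statements from two elementary estimates on the length $|\cdot|_\free$, in exactly the spirit of Lemma \ref{length-convergence-lem}. Write a generic element as $z=(y,g)$ with reduced word $y=y_{i_1}\cdots y_{i_n}$, set $s:=|\supp(y)|$, and decompose
$$|(y,g)|_\free = P(y,g) + D(y), \quad P(y,g):=\sum_{i\in\supp(y)}\min\{|i|_\cI,|g^{-1}\cdot i|_\cI\}, \quad D(y):=\sum_{k=1}^n|y_{i_k}|_\Delta.$$
Since $|\cdot|_\cI$ and $|\cdot|_\Delta$ are proper (Lemma \ref{length function lemma}), the sets $F_R:=\{i\in\cI : |i|_\cI\le R\}$ and $E_R:=\{d\in\Delta : |d|_\Delta\le R\}$ are finite for every $R$, and there is $c>0$ with $|d|_\Delta\ge c$ for all $d\ne e$ (else $E_1$ would be infinite). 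These are the only features of the length functions I will use.

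For part 1 I would first check that each generator $A(E,F,n)$ of $\cG_\free$ has bounded length: if $(y,g)\in A(E,F,n)$ then its syllable number is $\le n$, each syllable lies in the finite set $E$, and $\supp(y)\subset F\cup gF$; since $i\in gF$ forces $g^{-1}\cdot i\in F$, every term of $P(y,g)$ is at most $\max_{j\in F}|j|_\cI$, whence $|(y,g)|_\free\le n\bigl(\max_{d\in E}|d|_\Delta+\max_{j\in F}|j|_\cI\bigr)$. Thus $|z_\lambda|_\free\to\infty$ forces $z_\lambda$ to leave every $A(E,F,n)$, i.e. $z_\lambda\to\infty/\cG_\free$. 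Conversely, the sublevel set $B_R:=\{z : |z|_\free\le R\}$ is small$/\cG_\free$: from $D(y)\le R$ and $|y_{i_k}|_\Delta\ge c$ the syllable number is $\le R/c$ and each syllable lies in $E_R$, while for $i\in\supp(y)$ the bound $\min\{|i|_\cI,|g^{-1}\cdot i|_\cI\}\le R$ gives $i\in F_R\cup gF_R$; hence $B_R\subset A(E_R,F_R,\lfloor R/c\rfloor)$. Therefore $z_\lambda\to\infty/\cG_\free$ forces $|z_\lambda|_\free>R$ eventually for each $R$, i.e. $|z_\lambda|_\free\to\infty$, establishing the equivalence.

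For part 2 the naive bound $s\le n\le D(y)/c\le|(y,g)|_\free/c$ only shows the ratio is bounded by $1/c$, so the real content is to upgrade ``bounded'' to ``$\to 0$''. The key observation I would use is a \emph{uniform} control of the support: for any threshold $T>0$, the set $L_T$ of indices $i\in\supp(y)$ with $\min\{|i|_\cI,|g^{-1}\cdot i|_\cI\}\le T$ satisfies $L_T\subset F_T\cup gF_T$, so $|L_T|\le 2|F_T|$ independently of $g$, because $|gF_T|=|F_T|$. Every index outside $L_T$ contributes more than $T$ to $P(y,g)$, whence
$$|(y,g)|_\free \ge P(y,g) \ge (s-|L_T|)T \ge (s-2|F_T|)T,$$
which rearranges to $\dfrac{s}{|(y,g)|_\free}\le \dfrac1T+\dfrac{2|F_T|}{|(y,g)|_\free}$. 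Fixing $T$ and letting $(y,g)\to\infty/\cG_\free$, part 1 gives $|(y,g)|_\free\to\infty$, so the second term vanishes and $\limsup s/|(y,g)|_\free\le 1/T$; since $T$ is arbitrary the limit is $0$.

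The main obstacle is precisely this last improvement. A large support can only be assembled from indices that are simultaneously far from $F$ and from $gF$ and are therefore expensive in $P(y,g)$; the finiteness of the stabilizers and orbits enters—through the proper function $|\cdot|_\cI$ of Lemma \ref{length function lemma} and the ensuing finiteness of $F_T$—to give the $g$-uniform bound $|L_T|\le 2|F_T|$, which is exactly what forces $P$, and hence the whole length, to outgrow $s$. Everything else is routine bookkeeping parallel to the symmetric case.
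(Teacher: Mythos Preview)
Your argument is correct and follows essentially the same route as the paper. Part 1 is identical in substance; for part 2 you isolate the same key observation as the paper---that the ``cheap'' indices lie in $F_T\cup gF_T$ and hence number at most $2|F_T|$ uniformly in $g$---but package it as a direct inequality $s/|(y,g)|_\free\le 1/T + 2|F_T|/|(y,g)|_\free$ and a two-step limit, whereas the paper phrases the same content as a contrapositive claim (if $|(y,g)|_\free\le C\,|\supp(y)|$ then $|(y,g)|_\free$ is bounded by a constant depending only on $C$) followed by a contradiction.
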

\begin{proof}
	1. $(\Rightarrow)$ Take any $N\in \N$ and we show that  $\{z\in \Delta_\cI^\free\rtimes \Gamma\mid |z|_{\free}\leq N\}$ is small$/\cG_\free$. Take any such $z=(y,g)$ and write $y=y_{i_1}\cdots y_{i_n}$. Then since
	$$\sum_{k=1}^n |y_{i_k}|_{\Delta}\leq |(y,g)|_\free\leq N ,$$
and since $1\leq |y_{i_k}|_{\Delta}$ for all $i_k$ by construction, it follows that (a) $n\leq N$ and (b) $y_{i_k}\in B_N(\Delta)$ for all $i_k$, where $B_N(\Delta)$ is the $N$-ball in $\Delta$ with respect to $|\, \cdot \,|_\Delta$. 
For any $i\in \mathrm{supp}(y)$, since 
	$$ \min\{|i|_{\cI}, |g^{-1}\cdot i|_{\cI} \}\leq |z|_\free \leq  N ,$$
it follows that (c) $i\in B_N(\cI)\cup gB_N(\cI)$, where $B_N(\cI)$ is the $N$-ball in $\cI$. Thus $z$ is contained in $A(E,F,N)$ for $E=B_N(\Delta)$ and $F=B_N(\cI)$, both of which are finite sets by properness.

	$(\Leftarrow)$ Fix any $A(E,F,n)$ and we show $\sup_{z\in A(E,F,n)} |z|_\free<\infty$. If $yg=y_{i_1}\cdots y_{i_m}g\in A(E,F,n)$, then
\begin{align*}
	|(y,g)|_{\free} 
	&= \sum_{i\in \mathrm{supp}(y)} \mathrm{min}\{|i|_{\cI}, |g^{-1}\cdot i|_{\cI} \} + \sum_{k=1}^m |y_{i_k}|_{\Delta} \\
	&\leq \sum_{i\in \mathrm{supp}(y)} \sup_{j\in F}|j|_\cI+ \sum_{k=1}^m \sup_{x\in E}|x|_{\Delta} \\
	&\leq  2|F|\sup_{j\in F}|j|_\cI+ n\sup_{x\in E}|x|_{\Delta} <\infty.
\end{align*}

	2. We prove a claim.
\begin{claim}
For any $(y,g)\in \Delta_\cI^\free\rtimes \Gamma$ and $C>0$,
	$$|(y,g)|_{\free}\leq C\, |\mathrm{supp}(y)|\quad \Rightarrow \quad |(y,g)|_{\free}\leq 4C |B_{2C}(\cI)|.$$
\end{claim}
\begin{proof}
We can assume $|(y,g)|_{\free}\neq 0$. 
We have only to show $|\mathrm{supp}(y)|\leq 4 |B_{2C}(\cI)|$, so suppose $|\mathrm{supp}(y)|> 4 |B_{2C}(\cI)|$. 
Put $B_g:=B_{2C}(\cI) \cup gB_{2C}(\cI) $ and observe that
	$$|B_g|\leq 2 |B_{2C}(\cI)|< \frac{1}{2}|\mathrm{supp}(y)|.$$
Then since $\mathrm{supp}(y)\setminus B_g\neq \emptyset$ and since  $\mathrm{min}\{|i|_{\cI}, |g^{-1}\cdot i|_{\cI} \} > 2C$ for all $i\not\in B_g$, it follows that
\begin{align*}
	C\, |\mathrm{supp}(y)|
	\geq |(y,g)|_{\free}
	\geq \sum_{i\in \mathrm{supp}(y)\setminus B_g}  \mathrm{min}\{|i|_{\cI}, |g^{-1}\cdot i|_{\cI} \}
	> 2C|\mathrm{supp}(y)\setminus B_g|.
\end{align*}
We get $|\mathrm{supp}(y)|> 2 |\mathrm{supp}(y)\setminus B_g|$. This implies
\begin{align*}
	|\mathrm{supp}(y)|
	&= |\mathrm{supp}(y)\setminus B_g| + | B_g \cap \supp(y)|\\
	&< \frac{1}{2}|\mathrm{supp}(y)| + \frac{1}{2}|\mathrm{supp}(y)| = |\mathrm{supp}(y)|.
\end{align*}
This is a contradiction.
\end{proof}

Suppose by contradiction that there is a net $z_\lambda =(y_\lambda,g_\lambda)$ such that $z_\lambda \to \infty/\cG_\free$ and $|\mathrm{supp}(y_\lambda)||z_\lambda|_{\free}^{-1}\not \to 0$. Up to a subnet, we can assume there is $\delta>0$ such that 
	$$\frac{|\mathrm{supp}(y_\lambda)|}{|z_\lambda|_{\free}}\geq \delta\quad \text{for all }\lambda.$$
Then the claim for the case $C:=\delta^{-1}$ shows that $|z_\lambda|_{\free}\leq 4C |B_{2C}(\cI)|$ for all $\lambda$. This is a contradiction, since $|z_\lambda|_\free\to \infty$ by item 1.
\end{proof}

We define a map 
\begin{align*}
	\omega \colon \Delta_{\cI}^\full \rtimes \Gamma \to \ell^1(\cI)^+;\quad 
	\omega (y,g) = m(y,g) + a(y),
\end{align*}
where for $y=y_{i_1}\cdots y_{i_n}\in \Delta_\cI^\free$ and $g\in \Gamma$,
\begin{align*}
	m(y,g) = \sum_{i\in \mathrm{supp}(y)} \min\{|i|_{\cI}, |g^{-1}\cdot i|_{\cI}\}\delta_{i}\quad \text{and}\quad
	a(y)=\sum_{k=1}^n |y(i_k)|_{\Delta} \delta_{i_k},
\end{align*}
and $m(e,g)=0=a(e)$. We note that
\begin{itemize}
	\item $\|\omega(y,g)\|_1 = \|m(y,g)\|_1 + \|a(y)\|_1 = |(y,g)|_\free$;
	\item $a (y) = \sum_{k=1}^n a (y_{i_k})$ and $\|a (y)\|_1 = \sum_{k=1}^n \|a(y_{i_k})\|_1$ \quad for $y=y_{i_1}\cdots y_{i_n}$;
	\item $\omega (y^{-1},g)=\omega (y,g)$.
\end{itemize}
Up to normalization, we define
\begin{align*}
	\mu \colon \Delta_{\cI}^\full \rtimes \Gamma \to \mathrm{Prob}(\cI);\quad 
	\mu(z) = \frac{\omega (z)} {\|\omega(z)\|_1}.
\end{align*}

\begin{lem}
The following conditions hold true.
\begin{enumerate}
	\item For any $z=(y,h)\in \Delta_{\cI}^\free \rtimes \Gamma$ and $g\in \Gamma$,
	$$\|g\cdot \omega(z) - \omega(gz)\|_1 \leq |g|_\Gamma |\mathrm{supp}(y)|,\quad \|\omega(z) - \omega(zg)\|_1 \leq |g|_\Gamma |\mathrm{supp}(y)|.$$

	\item For any $z\in \Delta_{\cI}\rtimes \Gamma$ and $x\in \Delta_j$ for $j\in \cI$,
	$$\|\omega(xz) - \omega(z)\|_1 \leq \|\omega(x)\|_1,\quad \|\omega(zx) - \omega(z)\|_1 \leq \|\omega(x)\|_1.$$

\end{enumerate}
\end{lem}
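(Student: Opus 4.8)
The plan is to prove both inequalities by writing $\omega=m+a$ and estimating the two pieces separately, using that $a(y)$ records the $\Delta$-lengths of the syllables of $y$ while $m(y,g)$ records the $\cI$-lengths of the indices in $\supp(y)$. For item 1, the key observation is that the $a$-part is genuinely equivariant and insensitive to right $\Gamma$-translation: writing $z=(y,h)$, one has $gz=(g\cdot y,gh)$, whose syllables are those of $y$ with their indices relabelled by $g$, so $a(g\cdot y)=g\cdot a(y)$; and $zg=(y,hg)$ leaves $y$, hence $a(y)$, untouched. Thus in both displayed inequalities the $a$-part cancels and only $m$ survives. For the left estimate I substitute $i=g\cdot j$ and use $(gh)^{-1}(g\cdot j)=h^{-1}\cdot j$ to write the $j$-th term as $\min\{|j|_\cI,c\}-\min\{|g\cdot j|_\cI,c\}$ with $c=|h^{-1}\cdot j|_\cI$; for the right estimate the support is unchanged and the $i$-th term is $\min\{c',|h^{-1}\cdot i|_\cI\}-\min\{c',|g^{-1}\cdot(h^{-1}\cdot i)|_\cI\}$ with $c'=|i|_\cI$. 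In either case one argument of each $\min$ is fixed while the other is shifted by a single application of $g^{\pm1}$, so the elementary inequality $|\min\{a,c\}-\min\{b,c\}|\le|a-b|$ recalled earlier, combined with $\big||i|_\cI-|g\cdot i|_\cI\big|\le|g|_\Gamma$ (which follows from $|g\cdot i|_\cI\le|g|_\Gamma+|i|_\cI$ and symmetry of $|\cdot|_\Gamma$), bounds each of the $|\supp(y)|$ terms by $|g|_\Gamma$. Summing gives the claim.

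For item 2 I localize the effect of multiplying by a single syllable $x\in\Delta_j$. On the left, $xz=(xy,h)$, and passing from $y$ to $xy$ only alters the reduced word near its first letter: either $x$ is prepended as a new syllable at index $j$ (when $j\neq i_1$), or, when $j=i_1$, $x$ merges with $y_{i_1}$ into $xy_{i_1}\in\Delta_j$, which may even be trivial (cancellation). A short case analysis then gives $\|a(xy)-a(y)\|_1\le|x|_\Delta$ --- using the triangle inequality and symmetry of $|\cdot|_\Delta$ in the merge and cancellation cases --- while $\supp(xy)$ and $\supp(y)$ differ by at most the single index $j$, so $\|m(xy,h)-m(y,h)\|_1\le\min\{|j|_\cI,|h^{-1}\cdot j|_\cI\}\le|j|_\cI$. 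Recalling $\omega(x)=\omega(x,e)=(|j|_\cI+|x|_\Delta)\delta_j$, hence $\|\omega(x)\|_1=|j|_\cI+|x|_\Delta$, adding the two estimates yields the left inequality.

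The right inequality is the mirror statement. Here $zx=(y\,(h\cdot x),h)$ appends the syllable $h\cdot x\in\Delta_{h\cdot j}$ to $y$; since the $\Gamma$-action only permutes the copies of $\Delta$ and preserves $\Delta$-lengths, $|h\cdot x|_\Delta=|x|_\Delta$, and the support changes by at most the index $h\cdot j$, whose $m$-contribution is $\min\{|h\cdot j|_\cI,|j|_\cI\}\le|j|_\cI$, so the same case analysis applies. The routine half is item 1, which reduces entirely to the two recalled inequalities. The main obstacle is the case analysis in item 2: one must track carefully how the reduced word of $y$ changes under a single left/right syllable multiplication --- in particular the cancellation case $x=y_{i_1}^{-1}$ and the exact bookkeeping of which index enters or leaves $\supp(y)$ --- and verify that in every case the $a$- and $m$-discrepancies stay below $|x|_\Delta$ and $|j|_\cI$ respectively. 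I would also double-check the semidirect-product convention, so that right multiplication by $x\in\Delta_j$ indeed produces a syllable at index $h\cdot j$ rather than $j$.
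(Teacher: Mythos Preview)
Your proof is correct and follows essentially the same approach as the paper: split $\omega=m+a$, observe that the $a$-part is exactly equivariant in item 1, and for the $m$-part use the inequality $|\min\{a,c\}-\min\{b,c\}|\le|a-b|$ together with $\big||i|_\cI-|g\cdot i|_\cI\big|\le|g|_\Gamma$; for item 2 do a case analysis on whether the new syllable's index already lies in $\supp(y)$. The only minor difference is in the right inequality of item 2: the paper exploits the identity $\omega(y^{-1},g)=\omega(y,g)$ to reduce right multiplication to the already-proved left case, whereas you repeat the case analysis directly on the appended syllable $h\cdot x\in\Delta_{h\cdot j}$ --- both work, and your explicit treatment of the cancellation case $x=y_{i_1}^{-1}$ is in fact slightly more careful than the paper's.
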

\begin{proof}
For $g\in \Gamma$ and $y\in \Delta_\cI^\free$, we denote by $\pi_g(y)$ the associated action. If $y=y_{i_1}\cdots y_{i_n}$, then $\pi_g(y) = \pi_g(y_{i_1})\cdots \pi_g(y_{i_n})$ and each $\pi_g(y_{i_k})$ is $y_{i_k}$ as an element in $\Delta$ and is contained in $\Delta_{g\cdot i_k}$.

	1. By definition, we have
\begin{align*}
	\omega(gz) 
	&= \sum_{i\in \mathrm{supp}(y)} \min\{|g\cdot i|_{\cI}, |(gh)^{-1}\cdot g\cdot i|_{\cI}\} \delta_{g\cdot i} + \sum_{k=1}^n|y_{i_k}|_{\Delta} \delta_{g\cdot i_k};\\
	g\cdot \omega(z)  
	&= \sum_{i\in \mathrm{supp}(y)}\min\{|i|_{\cI}, |h^{-1}\cdot i|_{\cI}\} \delta_{g\cdot i} + \sum_{k=1}^n |y_{i_k}|_{\Delta} \delta_{g\cdot i_k}.
\end{align*}
It follows that
\begin{align*}
	\| g\cdot \omega(z) - \omega(gz)\|_1
	&=  \sum_{i\in \mathrm{supp}(y)} |\min\{|i|_{\cI}, |h^{-1}\cdot i|_{\cI}\}- \min\{|g\cdot i|_{\cI}, |h^{-1}\cdot i|_{\cI}\} |\\
	\ & \leq  \sum_{i\in \mathrm{supp}(y)}||i|_{\cI}-|g\cdot i|_{\cI}| 
	\leq |\mathrm{supp}(y)| |g|_\Gamma .
\end{align*}
The second inequality follows similarly.

	2. We see a claim.
\begin{claim}
For $z=(y,g)\in \Delta_\cI^\free\rtimes \Gamma$ and $x\in \Delta_j$, it holds that
	$$ \|m(xy,g) - m(y,g)\|_1\leq \|m(x,g)\|_1 ,\quad \|a(xy) - a(y)\|_1\leq \|a(x)\|_1.$$
\end{claim}
\begin{proof}
Write $y=y_{i_1}\cdots y_{i_n}$  and $ xz = x y_{i_1}\cdots y_{i_n} g$.
Observe that
\begin{itemize}
	\item $j\neq i_1$\quad $\Rightarrow$ \quad $a(xy) = a(x)+a(y)$;

	\item $j= i_1$\quad $\Rightarrow$ \quad $a(xy) = a(xy_{i_1})+a(y_{i_2}\cdots y_{i_n})$.
\end{itemize}
If $j\neq i_1$, then $\|a(xy) - a(y)\|_1 = \|a(x)\|_1$. If $j= i_1$, then
\begin{align*}
	\|a(xy)-a(y) \|_1
	&= \|a(xy_{i_1}) + a(y_{i_2}\cdots y_{i_n}) - a(y)\|_1\\
	&= \|a(xy_{i_1})  - a(y_{i_1})\|_1\\
	&= ||xy_{i_1}|_\Delta  - |y_{i_1}|_{\Delta} ||\leq |x|_\Delta=\|a(x)\|_1.
\end{align*}
Observe next that
\begin{itemize}

	\item $j\not\in \mathrm{supp}(y)$\quad $\Rightarrow$ \quad $m(xy,g) =m(x,g) + m(y,g)$;

	\item $j\in \mathrm{supp}(y)$\quad $\Rightarrow$ \quad $m(xy,g) = m(y,g)$.

\end{itemize}
If $j\not \in \mathrm{supp}(y)$, then $\|m(xy,g) -m(y,g)\|_1 = \|m(x,g)\|_1$. If $j\in \mathrm{supp}(y)$, then  $\|m(xy,g) - m(y,g)\|_1= 0\leq  \| m(x,g)\|_1$.
\end{proof}

By the claim, we get 
\begin{align*}
	\|\omega(xy,g) - \omega(y,g)\|_1
	&\leq \|m(xy,g) - m(y,g)\|_1 + \|a(xy) - a(y)\|_1\\
	&\leq \|m(x,g)\|_1 + \|a(x)\|_1 = \|\omega(x,g)\|_1.
\end{align*}
Since $\|\omega(x,g)\|_1\leq \|\omega(x)\|_1$, we get the first inequality. 

We see the second one. It is straightforward to see that $\|\omega(\pi_g(x),g)\|_1\leq \|\omega(x,e)\|_1$ for any $g\in \Gamma$ and $x\in \Delta_\cI^\free$. Then since $\omega(y^{-1},g) = \omega(y,g)$,
\begin{align*}
	\| \omega(y(g\cdot x),g) - \omega(y,g) \|_1
	&= \| \omega((g\cdot x^{-1})y^{-1},g) - \omega(y^{-1},g) \|_1\\
	&\leq \| \omega((g\cdot x^{-1}),g) \|_1\\
	&\leq \| \omega(x^{-1},e) \|_1 = \| \omega(x,e) \|_1.
\end{align*}
This is the conclusion.
\end{proof}

Now following the same arguments as in the Fock spaces, we get the following lemma. Theorem \ref{prop-biexact-free} obviously follows by this lemma.

\begin{lem}
The following statements hold true.
\begin{enumerate}

	\item (Equivariance) For any $g,h\in \Gamma$ and $\varphi\in \ell^\infty(\cI)$,
	$$    \mu^*(g\cdot \varphi) - \mu^*(\varphi)(g^{-1}\, \cdot \, h)\in c_0(\cG_\free).$$

	\item (Commutativity) For any $x,y\in \Delta_{\cI}^\free$ and $\varphi\in \ell^\infty(\cI)$,
	$$ \mu^*(\varphi)(x\, \cdot \, y) - \mu^*(\varphi)\in c_0(\cG_\free).$$
\end{enumerate}
In particular, $\mu^*(\ell^\infty(\cI))\subset C(\overline{\cG_\free})$ and we have a $\Gamma$-equivariant ucp map
	$$q\circ \mu^*\colon \ell^\infty(\cI) \to C(\partial\cG_\free),$$
where $q\colon C(\overline{\cG_\free})\to C(\partial \cG_\free)$ is the quotient map.
\end{lem}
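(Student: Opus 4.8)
The plan is to mimic the proof of Lemma~\ref{lem-equiv-comm-full} essentially verbatim, translating both assertions into statements about $\ell^1$-convergence of the probability measures $\mu(z)$ as $z\to\infty/\cG_\free$. By the characterization of $c_0(\cG_\free)$ in Lemma~\ref{c0-lemma}(2)(c), a bounded function lies in $c_0(\cG_\free)$ precisely when it vanishes along every net tending to $\infty/\cG_\free$; since $\cG_\free$ is invariant under left and right translations, the equivariance assertion reduces, for each fixed $g,h\in\Gamma$, to
\[
\lim_{z\to\infty/\cG_\free}\|\mu(gzh)-g\cdot\mu(z)\|_1=0,
\]
and the commutativity assertion reduces to $\lim_{z\to\infty/\cG_\free}\|\mu(xzy)-\mu(z)\|_1=0$. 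The one elementary tool used throughout is the normalization inequality $\|\mu(z)-\mu(z')\|_1\le \tfrac{2\|\omega(z)-\omega(z')\|_1}{\|\omega(z)\|_1}$, exactly as in that earlier proof.

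For equivariance, writing $z=(y,h_0)$, I would combine the two displacement bounds of the lemma above, $\|g\cdot\omega(z)-\omega(gz)\|_1\le|g|_\Gamma|\supp(y)|$ and $\|\omega(gz)-\omega(gzh)\|_1\le|h|_\Gamma|\supp(y)|$ (the support cardinality being preserved by $\pi_g$), into $\|g\cdot\omega(z)-\omega(gzh)\|_1\le(|g|_\Gamma+|h|_\Gamma)|\supp(y)|$. Feeding this into the normalization inequality and using $\|\omega(z)\|_1=|(y,h_0)|_\free$ gives
\[
\|g\cdot\mu(z)-\mu(gzh)\|_1\le\frac{2(|g|_\Gamma+|h|_\Gamma)\,|\supp(y)|}{|(y,h_0)|_\free}.
\]
The crux is then to invoke item~2 of the length lemma above, $\lim_{(y,g)\to\infty/\cG_\free}|\supp(y)|/|(y,g)|_\free=0$, to force the right-hand side to vanish. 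This is the step with no counterpart in the Fock-space argument (there the denominator carried the extra quadratic term $|z|_0^2$, making the ratio trivially small), and I expect it to be the main obstacle: all genuine content sits in that growth estimate, so the whole equivariance argument only runs once it is available.

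For commutativity the situation is easier, since the displacement of $\omega$ is bounded by a constant independent of $z$. For generators $x\in\Delta_j$, $y\in\Delta_{j'}$ the lemma above gives $\|\omega(xz)-\omega(z)\|_1\le\|\omega(x)\|_1$ and $\|\omega(zy)-\omega(z)\|_1\le\|\omega(y)\|_1$; for general words $x,y\in\Delta_\cI^\free$ one iterates these over the letters to obtain a constant $C_{x,y}$ with $\|\omega(xzy)-\omega(z)\|_1\le C_{x,y}$ for all $z$. The normalization inequality then yields $\|\mu(xzy)-\mu(z)\|_1\le 2C_{x,y}/|z|_\free$, which tends to $0$ because $z\to\infty/\cG_\free$ is equivalent to $|z|_\free\to\infty$ by item~1 of the length lemma above.

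Finally, for the concluding assertion I would assemble the two parts. Given $x\in\Delta_\cI^\free$ and $h\in\Delta_\cI^\free\rtimes\Gamma$, factor $h=y'g'$ with $y'\in\Delta_\cI^\free$ and $g'\in\Gamma$; applying the left--right translation $T\colon\phi\mapsto\phi(x\,\cdot\,y')$ to the relation $\mu^*(\varphi)(\,\cdot\,g')-\mu^*(\varphi)\in c_0(\cG_\free)$ (equivariance with $g=e$) and using that $c_0(\cG_\free)$ is a translation-invariant ideal gives $\mu^*(\varphi)(x\,\cdot\,h)-\mu^*(\varphi)(x\,\cdot\,y')\in c_0(\cG_\free)$, while $\mu^*(\varphi)(x\,\cdot\,y')-\mu^*(\varphi)\in c_0(\cG_\free)$ is commutativity; hence $\mu^*(\varphi)\in C(\overline{\cG_\free})$. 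The $\Gamma$-equivariance of $q\circ\mu^*$ is exactly equivariance with $h=e$ pushed to the quotient, and complete positivity is automatic because $\mu^*$ is a unital positive map out of the commutative algebra $\ell^\infty(\cI)$.
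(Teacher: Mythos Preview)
Your proposal is correct and matches precisely what the paper intends: the paper itself gives no detailed proof here, stating only that one follows ``the same arguments as in the Fock spaces,'' and your write-up does exactly that, carrying over the template of Lemma~\ref{lem-equiv-comm-full} and plugging in the displacement bounds together with the growth estimate $|\supp(y)|/|z|_\free\to 0$ from the preceding length lemma. Your identification of that ratio estimate as the crucial new ingredient (replacing the $|z|_0/(|z|_0^2+|z|_1)$ mechanism of the Fock case) is exactly right.
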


\section{Application to rigidity of von Neumann algebras}\label{Application to rigidity of von Neumann algebras}

\subsection{Examples}\label{Examples}

We introduce concrete examples of crossed product $B\rtimes_{\rm red} \Gamma\subset M\rtimes \Gamma$, for which our rigidity results are applied. We always assume that $\Gamma \act^\pi X$ is an action 
of a countable exact group $\Gamma$ on a set $X$, which has finite stabilizers and finitely many orbits. Consider $\ell^2(X)$ and associated Fock spaces $\cF_\ast$, where $\ast$ is full, sym, or anti. We have $U_g^\pi$ and $J_\ast$ acting on $\cF_\ast$.

\subsection*{Gaussian algebras acting on $\cF_\sym$}

Assume $I=\id_X$ in this case. As in Subsection \ref{Bi-exactness on symmetric Fock spaces}, we put $B:=\mathrm{C}^*\{e^{\ri W(x)}\mid x\in X\}$ and $M:=B''$. We are interested in the inclusion $B\rtimes_{\rm red}\Gamma \subset M\rtimes \Gamma$. Note that $\Gamma\act M$ is the generalized Bernoulli action with diffuse base, arising from $\Gamma \act X$. 
Observe that $(\cF_\sym \otimes \ell^2(\Gamma),J_\sym)$ is the standard representation with the modular conjugation of $M\rtimes \Gamma$, and $U_g^\pi$ is the standard implementation of the action $\Gamma \act M$.

\subsection*{(Free) Araki--Woods algebras acting on $\cF_\full$ and $\cF_\anti$}

Let $\R\to \mathcal O(H_\R)$ be any strongly continuous representation on a real Hilbert space $H_\R$. Put $H:=H\otimes_\R \C$ and let $I_\R$ be the involution for $H_\R \subset H$. Using the infinitesimal generator $A$, as in Subsection \ref{Fock spaces and associated von Neumann algebras}, consider $\Gamma_q(H_\R,U)$ for $q=0,-1$ and associated objects
\begin{align*}
	&j(\xi)=\frac{\sqrt{2}}{\sqrt{1+A^{-1}}}\, \xi ,\quad W(j(\xi)):=\ell(j(\xi)) + \ell(j(I\xi))^*,\quad \xi\in H;\\
	& B:=\mathrm{C}^*\{ W(j(\xi)) \mid \xi \in H\} \subset \mathrm{W}^*\{ W(j(\xi)) \mid \xi \in H\}  = \Gamma_q(H_\R,U)=:M.
\end{align*}
It holds that $L^2(M)=\cF_*$, where $\ast$ is full or anti. Assume that there is an identification $\ell^2(X)=H$ such that 
\begin{itemize}
	\item the involutions $I$ from $\ell^2(X)$ and $I_\R$ from $H_\R \subset H$ coincide;
	\item $\pi_g$ and $U_t$ on $\ell^2(X)=H$ commute for all $g\in \Gamma$ and $t\in \R$.
\end{itemize}
In this assumption, $\Gamma \act^\pi H$ induces a vacuum state preserving action $\Gamma \act M$. Then it is straightforward to check that $U_g^\pi$ and $J_\ast$ arising from the structure of $\ell^2(X)$ coincide with the standard implementation and the modular conjugation arising from that of $H$ and $(H_\R,U_t)$.

We give concrete examples satisfying the above assumptions. Let $\Gamma \act^\pi \cI$ be any action of a countable group on a set $\cI$ which has finite stabilizers and finitely many orbits. Fix $\lambda\geq 1$ and put $H(\lambda):=\ell^2(\cI)=:H(\lambda^{-1})$ and $H:=H(\lambda )\oplus H(\lambda^{-1})=\ell^2(\cI^1\sqcup \cI^2)$, where $\cI=\cI^1=\cI^2$. 
Put $X:=\cI^1\sqcup \cI^2$ and let $i_k\colon \cI\to X$ be the $k$-th embedding for $k=1,2$. Extend $\pi$ on $X$ diagonally and define a bijection $I$ on $X$ by the flip, that is, $Ii_1(x)=i_2(x)$ and $Ii_2(x)=i_1(x)$ for all $x\in \cI$. We consider $U\colon \R \act H$ by $U_t\xi = \lambda^{\ri t}\xi $ for $\xi\in H(\lambda)$ and $U_t\xi = \lambda^{-\ri t}\xi $ for $\xi\in H(\lambda^{-1})$. Observe that we can regard $U$ as an almost periodic representation arising from $H_\R \subset H$. In this case, the involution $I_\R$ for $H_\R \subset H$ coincides with $I$ for $\ell^2(X)$ and $\pi_g$ commutes with $U_t$ for all $g\in \Gamma$ and $t\in \R$. Hence they satisfy the above assumptions. More generally we can take a finite direct sum of such $H$.

\subsection*{Free wreath products}

Let $\Delta$ be a countable bi-exact group. Putting $\cI:=X$, consider the free wreath product $\Delta\wr^\free_\cI \Gamma = \Delta_\cI^\free \rtimes \Gamma$. We put $B:=C_\lambda^\ast (\Delta_\cI^\free)\subset L(\Delta_\cI^\free)=:M$ and 
	$$B\rtimes_{\rm red}\Gamma = C_\lambda^\ast (\Delta_\cI^\free \rtimes \Gamma)\subset L (\Delta_\cI^\free \rtimes \Gamma) = M\rtimes \Gamma .$$
The associated action $\Gamma \act \Delta_\cI^\free$ and the group inverse naturally induce $U_g^\pi$ and $J_\free$ on $\ell^2(\Delta_\cI^\free)$ in this setting. We also have the family $\cG_\free$ and put $\sf X_{\free}:= \Delta_{\cI}^\free$.

\subsection{Condition (AO) and rigidity}\label{Condition (AO) and rigidity}

	Let $B\rtimes_{\rm red}\Gamma \subset M\rtimes \Gamma$ be an inclusion given in the last subsection. We put $B_r:=J_\ast BJ_\ast$ with $\Gamma$-action by $\Ad(U_g^\pi)$ and consider $B_r\rtimes_{\rm red}\Gamma$. 

Summarizing all our previous results, we obtain the following (relative) condition AO and solidity.

\begin{prop}\label{bi-exact-Fock}
	Keep the notation from Subsection \ref{Examples}. Then the following $\ast$-homomorphism is min-bounded, nuclear, and has a ucp lift:
\begin{align*}
	\nu\colon (B\rtimes_{\rm red}\Gamma) \otimes_{\rm alg} (B_r \rtimes_{\rm red}\Gamma) \to \mathrm{M}(\K(\cG_\ast))/\K(\cG_\ast);\quad x\otimes y\mapsto \pi_\ell(x)\pi_r(y)+\K(\cG_\ast).
\end{align*}
It $\Gamma$ is bi-exact, we can replace $\K(\cG_\ast)$ with $\K(L^2(M)\otimes \ell^2(\Gamma))$, hence $M\rtimes \Gamma $ satisfies condition $\rm (AO)$ with a ucp lift.
\end{prop}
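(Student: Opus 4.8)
The plan is to deduce the genuine condition (AO) (modulo the full compacts) from the relative one established in the first half of the proposition, by intersecting the family $\cG_\ast$ with a second family coming from $\Gamma$ and then invoking Lemma \ref{intersection-AO-lem}. Throughout, $S_\ast$ denotes the relevant semigroup, i.e.\ $\sfX$, $\sfY$, $\sfZ$, or $\Delta_\cI^\free$ (with $\sfZ$ read via the $S_0$-version of the results, as in Subsection \ref{Bi-exactness and condition (AO)}), so that $\ell^2(S_\ast\rtimes\Gamma)=L^2(M)\otimes\ell^2(\Gamma)$.

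First I would take $\mathcal E$ to be the family of all finite subsets of $\Gamma$, so that $c_0(\mathcal E)=c_0(\Gamma)$ and ``$\Gamma$ bi-exact'' is literally ``$\Gamma$ bi-exact$/\mathcal E$''. As in the discussion preceding Lemma \ref{intersection-AO-lem}, set $\cF:=\{S_\ast E\mid E\in\mathcal E\}$, which satisfies the relativity condition of Assumption \ref{assumption}. The key combinatorial point is that $\cG_\ast\cap\cF$ consists only of finite subsets of $S_\ast\rtimes\Gamma$: each member of $\cG_\ast$ is bounded in the $S_\ast$-direction (bounded tensor/word length, by Lemma \ref{length-convergence-lem} and its free analogue), while membership in $S_\ast E$ confines the $\Gamma$-coordinate to the finite set $E$, and for each fixed $g$ the $\Gamma$-coordinate pins down a single element inside a given double coset $s\Gamma t$ (resp.\ inside a given $A(E',F,n)$). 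Hence $c_0(\cG_\ast\cap\cF)=c_0(S_\ast\rtimes\Gamma)$ and $\K(\cG_\ast\cap\cF)=\K(\ell^2(S_\ast\rtimes\Gamma))=\K(L^2(M)\otimes\ell^2(\Gamma))$.

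Next I would verify the hypotheses of Lemma \ref{intersection-AO-lem} for $\cG=\cG_\ast$, this $\cF$, and this $\mathcal E$. That $\Gamma\act S_\ast$ is bi-exact$/\cG_\ast$ is Theorem \ref{prop-amenable-full}, \ref{prop-amenable-sym}, \ref{prop-amenable-anti}, or \ref{prop-biexact-free}, and $\Gamma$ is bi-exact$/\mathcal E$ by hypothesis. The min-boundedness and nuclearity of $\nu_{\cG_\ast}$ are exactly what was obtained in proving the first half of the proposition through Proposition \ref{bi-exact-prop}. For $\nu_\cF$ one uses $1\otimes\ell^\infty(\Gamma)\subset\ell^\infty(\overline{\cF})$: modulo $\K_\cF$ the image $\pi_\cF(\ell^\infty(\Gamma)/c_0(\Gamma))$ is central and carries the amenable $\Gamma\times\Gamma$-action on $\ell^\infty(\Gamma)/c_0(\Gamma)$ (amenable since $\Gamma$ is bi-exact); together with the fact that the creation-operator algebras $\cC_\ell,\cC_r$ have nuclear images modulo the full compacts (a Cuntz/Toeplitz-type quotient), this makes $\nu_\cF$ min-bounded and nuclear by the same reasoning as in Proposition \ref{bi-exact-prop}. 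Finally, the hypothesis that $\pi_\ell\otimes\pi_r\colon B\ota B_r\to\rM/\K$ be min-bounded modulo the full compacts is the ``absolute'' (Akemann--Ostrand) form of (AO): opposite creation operators commute exactly in the standard representation, so their commutators lie in $\K(L^2(M)\otimes\ell^2(\Gamma))$, and since $\cC_\ell$ modulo those compacts is nuclear the product $\ast$-homomorphism factors through the minimal tensor norm.

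With all hypotheses in place, Lemma \ref{intersection-AO-lem} yields that $B\rtimes_{\rm red}\Gamma$ satisfies condition (AO) relative to $\K(\cG_\ast\cap\cF)=\K(L^2(M)\otimes\ell^2(\Gamma))$, that the associated bounded map is nuclear, and---since $B\rtimes_{\rm red}\Gamma$ is separable---that it admits a ucp lift. Because $L^2(M)\otimes\ell^2(\Gamma)=L^2(M\rtimes\Gamma)$ is the standard representation and $\pi_\ell,\pi_r$ restrict there to the left and right actions of $M\rtimes\Gamma$, this is precisely condition (AO) for $M\rtimes\Gamma$ with a ucp lift, as claimed. The main obstacle is the verification of the two non-formal hypotheses of Lemma \ref{intersection-AO-lem}---the absolute min-boundedness of $\pi_\ell\otimes\pi_r$ modulo the full compacts and the nuclearity of $\nu_\cF$---both of which rest on the compactness of commutators of opposite creation operators and the nuclearity of the Cuntz/Toeplitz quotients; once these and the already-established bi-exactness statements are in hand, the upgrade from $\K(\cG_\ast)$ to the full compacts is a direct application of the intersection lemma.
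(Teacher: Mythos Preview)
Your overall strategy---taking $\mathcal E$ to be the finite subsets of $\Gamma$, forming $\cF$, observing $\K(\cG_\ast\cap\cF)=\K(L^2(M)\otimes\ell^2(\Gamma))$, and invoking Lemma \ref{intersection-AO-lem}---is exactly the paper's approach. However, the paper treats the four cases separately because the verification of the hypotheses of Proposition \ref{bi-exact-prop} and Lemma \ref{intersection-AO-lem} is genuinely different in each, and your write-up blurs these distinctions in ways that leave real gaps.

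First, your sentence ``opposite creation operators commute exactly in the standard representation'' is not correct at the level of $\cC_\ell,\cC_r$: on the full Fock space $[\ell(x)^*,r(y)]=\delta_{x,y}P_{\C\Omega}$, so $[\pi_\ell(\cC_\ell),\pi_r(\cC_r)]$ lands in $\K(\ell^2(\sfX\rtimes\Gamma))$, not in $0$, and this computation (which uses that $\pi$ has finite stabilizers) is where the actual work is. What does commute exactly is $B$ with $B_r=J_\ast B J_\ast$, but $B$ need not be nuclear, so min-boundedness of $\pi_\ell\otimes\pi_r$ does not follow from that alone. The paper's argument is to enlarge to the nuclear Cuntz--Toeplitz algebras $\cC_\ell\supset B$, $\cC_r\supset B_r$, prove \emph{their} commutators are compact, and then restrict; your ingredients are right but the logical order is garbled.

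Second, in the symmetric case the creation operators are unbounded, so Assumption \ref{assumption} fails and neither Proposition \ref{bi-exact-prop} nor Lemma \ref{intersection-AO-lem} applies as stated; the paper explicitly says one must rerun their proofs with the modified boundary algebra $\ell^\infty(\overline{\cG_\sym})$ of Subsection \ref{Bi-exactness on symmetric Fock spaces} and Lemma \ref{lem-equiv-comm-sym2}. Third, in the free wreath product case there is no Cuntz--Toeplitz algebra; the nuclear envelope of $B=C_\lambda^*(\Delta_\cI^\free)$ is the algebra $\cA=\ast_{i\in\cI}\cA_i$ from Proposition \ref{prop-infinite-free-product}, and the compactness of $[\pi_\ell(\cA),\pi_r(J\cA J)]$ again uses the finite-stabilizer hypothesis. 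None of this changes the architecture of your argument, but each case must be handled on its own terms.
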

\begin{proof}
	Let $\mathcal E$ be the family of all finite subsets in $\Gamma$ and define $\cF$ as in Lemma \ref{intersection-AO-lem}. Observe that $\K(\cG_\ast\cap \cF) = \K(\ell^2(\sf X_\ast\rtimes \Gamma))$.

	We first consider the case for $\cF_\anti$. In this case, since $\Gamma \act^\pi\sfZ$ is bi-exact$/\cG_\anti$ by Theorem \ref{prop-amenable-anti}, $B$ is nuclear, and since $[B,B_r]=0$, we can directly apply Proposition \ref{bi-exact-prop}. For the last part of the statement, we can use Lemma \ref{intersection-AO-lem} for $\cF$.

We next consider the case for $\cF_\full$. We first claim that $[\pi_\ell(a),\pi_r(b)]\subset \K(\ell^2(\sfX\rtimes\Gamma))$ for all $a\in \cC_\ell$ and $b\in \cC_r$. We may assume $a=\ell(x)$ and $b=r(y)$ for some $x,y\in X$. Since $[\ell(x)^*,r(y)]= \delta_{x,y} P_{\C\Omega}$, where $P_{\C\Omega}$ is the projection onto $\C\Omega$, we have
\begin{align*}
	[\pi_\ell(\ell(x)^*),\pi_r(r(y))]
	&= \sum_{h\in \Gamma} \delta_{x,\pi_h(y)}P_{\C\Omega}\otimes e_{h,h}=:f\in \ell^\infty(\sfX\rtimes \Gamma).
\end{align*}
It satisfies $f(s,g) =\delta_{s,\star} \delta_{x,\pi_g(y)}$, hence it is contained in $c_0(\sfX\rtimes \Gamma)$ (since $\pi$ has finite stabilizers). The claim is proven. 
Then since $\cC_\ell,\cC_r$ are nuclear, $\nu_{\cG_\full}$ and $\nu_\cF$ in Proposition \ref{bi-exact-prop} and Lemma \ref{intersection-AO-lem} are min-bounded and nuclear (because they are defined at the level of $\cC_\ell,\cC_r$). We get the conclusion.

We consider the case for $\cF_\sym$. In this case, we can not apply Proposition \ref{bi-exact-prop} and Lemma \ref{intersection-AO-lem}, but we can follow the same proofs in this setting by the amenability of $\Gamma \act C(\partial \cG_\sym)$ in Theorem \ref{prop-amenable-sym}. They are all straightforward, hence we omit it.

We consider the case of free wreath products. Let $\cA:=\ast_{i\in \cI}\cA_i$ be a nuclear C$^*$-algebra containing $B$ used in the proof of Proposition \ref{prop-infinite-free-product}. Then since each $\cA_i$ is contained in $\mathrm{C}^*\{C_\lambda^\ast (\Delta_i),\ \ell^\infty(\Delta_i)\}$, $\cC$ is contained in $\mathrm{C}^*\{C_\lambda^\ast (\Delta_\cI^\free),\ \ell^\infty(\Delta_\cI^\free)\}$. 
We claim that $[\pi_\ell(a),\pi_r(JbJ)] \in \K(\ell^2(\Delta_\cI^\free\rtimes \Gamma))$ for all $a,b\in \cA$. 
We may assume $a\in \cA_i$ and $b\in \cA_j$ for some $i,j$. Then
\begin{align*}
	[\pi_\ell(a),\pi_r(JbJ)]
	= \sum_{h\in \Gamma} [ a,  J \alpha_h(b) J]\otimes e_{h,h},
\end{align*}
where $\alpha_h(b)$ is $b$ contained in $\cA_{h\cdot j}$. As in the proof in Proposition \ref{prop-infinite-free-product}, $ [a,J \alpha_h(b) J] =0$ if $h\cdot j \neq i$ and $[a,J \alpha_h(b) J]$ is compact if $h\cdot j=i$ (which happens for finitely many $h\in \Gamma$). We conclude $[\pi_\ell(a),\pi_r(JbJ)]$ is compact and the claim is proven. 
Now we can define $\nu_{\cG_\full}$ and $\nu_\cF$ in Proposition \ref{bi-exact-prop} and Lemma \ref{intersection-AO-lem} at the level of $\cA$ and $J\cA J$, hence they are min-bounded and nuclear. We get the conclusion.
\end{proof}

\begin{thm}\label{solid-thm1}
	Keep the setting from Subsection \ref{Examples}. Then the action $\Gamma \act M$ is solid. Further $M\rtimes \Gamma$ is solid if $\Gamma$ is bi-exact.
\end{thm}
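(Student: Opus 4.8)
The plan is to feed the (relative) condition (AO) of Proposition~\ref{bi-exact-Fock} into the solidity machinery of Ozawa and of \cite[Appendix]{HIK20}. First I would realize $N:=M\rtimes\Gamma$ in its standard representation on $\cH:=\cF_\ast\otimes\ell^2(\Gamma)$ with modular conjugation $J$ as in Subsection~\ref{Standard representations}, and record that $C:=\pi_\ell(B\rtm\Gamma)$ and $C_r:=\pi_r(B_r\rtm\Gamma)=JCJ$ are weakly dense in $N$ and in $N'=JNJ$. Proposition~\ref{bi-exact-Fock} then provides a ucp lift $\theta$ with $\theta(x\otimes y)-\pi_\ell(x)\pi_r(y)\in\K(\cG_\ast)$, and, when $\Gamma$ is bi-exact, the same statement with $\K(\cG_\ast)$ replaced by the genuine compacts $\K(\cH)$. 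Since a diffuse subalgebra sits with a normal conditional expectation and injectivity passes to expected subalgebras, it suffices in both cases to show that the relative commutant in $N$ of the appropriate diffuse subalgebra $A$ (with $A\subset M$ for the action, $A\subset N$ for $N$ itself) is amenable; this even yields the stated $A'\cap M$ by restricting $E_M$.

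For the amenability step I would run the standard condition-(AO) argument. Writing $P:=A'\cap N$ and passing to biduals as in Proposition~\ref{prop-nuclear}, the ucp lift $\theta$ yields a ucp map that is, modulo $\K$, the commuting multiplication $x\otimes y\mapsto \pi_\ell(x)\pi_r(y)$; restricting it to $P$ and $JPJ$ and composing with a state produces a candidate $P$-central state on $\B(\cH)$. In Connes' bimodule language this shows that the identity $P$-$P$ bimodule is weakly contained in the coarse one as soon as the $\K$-error is negligible relative to $A$, which is precisely amenability of $P$. When $\Gamma$ is bi-exact and $\K=\K(\cH)$, this negligibility is automatic: a diffuse $A$ contains unitaries tending weakly to $0$, so the genuine-compact errors drop out of the weak containment, and one recovers Ozawa's solidity theorem (\cite[\S15]{BO08}); this proves that $M\rtimes\Gamma$ is solid.

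The remaining, and main, difficulty is the relative statement, where $A\subset M$ but $\K=\K(\cG_\ast)$ are only relative compacts. Here the genuine-compact vanishing is unavailable, and one must instead verify that the $M$-$M$ bimodule generated by $\K(\cG_\ast)$ is weakly contained in the coarse $M$-$M$ bimodule $L^2(M)\otimes L^2(M)$, equivalently that the unitaries of a diffuse $A\subset M$ asymptotically escape $c_0(\cG_\ast)$. This is exactly where the geometry of $\cG_\ast$, rather than abstract (AO), is used: by Lemma~\ref{length-convergence-lem} the members of $\cG_\ast$ are the sublevel sets of the length $|\cdot|_\ast$ built from the Fock degree and the base length, and this properness forces $c_0(\cG_\ast)$ to be annihilated in the $M$-bimodule limit. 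I would package this via the abstract criterion of \cite[Appendix]{HIK20}, noting that the state-preserving hypothesis removes the modular complications of \cite[Section~4]{HIK20}. Establishing this relative negligibility is the step I expect to be the crux; once it is in place, amenability of $A'\cap N$, and hence solidity of $\Gamma\act M$, follows as above.
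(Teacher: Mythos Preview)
Your overall strategy is correct and matches the paper's: invoke Proposition~\ref{bi-exact-Fock} for the (relative) condition (AO) with its ucp lift, then run an Ozawa-type argument, with the bi-exact case reducing to the standard one via genuine compacts. The paper's own proof is in fact just a two-line reference to \cite[Theorem~4.7]{HIK20} once one structural observation is made.

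Where your proposal diverges from the paper is precisely at the crux you flag: why the $\K(\cG_\ast)$-errors are negligible for a diffuse $A\subset M$. You propose to derive this from the length-function characterization of $\cG_\ast$ in Lemma~\ref{length-convergence-lem}, claiming that ``this properness forces $c_0(\cG_\ast)$ to be annihilated in the $M$-bimodule limit''. This is the wrong lever. Lemma~\ref{length-convergence-lem} tells you how \emph{points} of $\mathsf{X}_\ast\rtimes\Gamma$ escape members of $\cG_\ast$; it says nothing about how \emph{unitaries in $M$} interact with $\K(\cG_\ast)$, and you give no mechanism to bridge that gap. The paper's observation is much simpler and bypasses the length function entirely: every $E\in\cG_\ast$ has \emph{finite fibers over $\Gamma$} (this is immediate from the shape $\bigcup_{\text{finite}} s\Gamma t$, respectively its $Q^{-1}$-pullback, its intersection with $\sfZ$, or the sets $A(E,F,n)$), so that $c_0(\cG_\ast)$ lands in
\[
\cK:=\Bigl\{\textstyle\sum_{g\in\Gamma} x_g\otimes e_{g,g}\ \Big|\ x_g\in\K(L^2(M)),\ \sup_g\|x_g\|_\infty<\infty\Bigr\}.
\]
Since $A\subset M$ acts as $A\otimes 1$, the error terms are therefore genuinely compact in the $L^2(M)$-direction uniformly in $g$, and the argument of \cite{Oz04} as packaged in \cite[Theorem~4.7]{HIK20} applies directly. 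The length functions have already been consumed in the proof of Proposition~\ref{bi-exact-Fock}; they play no further role here.
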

\begin{proof}
	Observe that $\K(\cG_\ast)$ is contained in 
	$$ \cK:=\{ \sum_{g\in \Gamma} x_g\otimes e_{g,g}\mid x_g\in \K(L^2(M)) ,\ \sup_g\|x_g\|_\infty <\infty\}. $$
Then one can follow Ozawa's proof \cite{Oz04}, see \cite[Theorem 4.7]{HIK20}. 
\end{proof}

\begin{rem}\upshape\label{solid-remark}
	We finally explain relationship between our Theorem \ref{solid-thm1} and known results in Popa's deformation/rigidity theory.
\begin{itemize}
	\item The case of symmetric Fock spaces (i.e.\ actions on Gaussian algebras) was proved in \cite{Bo12} in a more general setting. 

	\item The case of full symmetric Fock space (i.e.\ actions on free Araki--Woods factors) was studied in \cite{HS09,Ho12,HT18}. When $\Gamma$ is amenable, the solidity is easily deduced from \cite[Theorem D]{HT18}. When $\Gamma$ is not amenable, the solidity result in Theorem \ref{solid-thm1} is not discussed in these articles. 

	\item The case of anti-symmetric Fock space (i.e.\ actions on Araki--Woods factors) is not studied, so Theorem \ref{solid-thm1} provides new examples.

	\item For the case of free wreath product groups, the solidity is not known, so Theorem \ref{solid-thm1} provides new examples.

\end{itemize}
Thus Theorem \ref{solid-thm1} provides some new examples of solid actions and solid factors. We however strongly believe that appropriate adaptations of known techniques in Popa's deformation/rigidity theory should be applied to above new examples. 
Therefore we do not emphasize that they are really new examples, but we do emphasize that our proofs involve boundary amenability which have independent interests.
\end{rem}

\small{

}
\end{document}